\newtheorem{theorem}{Theorem}
\newtheorem{theorem*}{Theorem}
\newtheorem{lemma}{Lemma}
\newtheorem{proposition}[theorem]{Proposition}
\newtheorem{definition}{Definition}
\newtheorem{corollary}{Corollary}
\newtheorem{remark}{Remark}
\newcommand{\RR}{\mathbb{R}}
\numberwithin{equation}{section}
\numberwithin{theorem}{section}
\numberwithin{lemma}{section}
\numberwithin{example}{section}
\begin{document}
    
    \title[Nonlinear elliptic eigenvalue problems]{Nonlinear elliptic eigenvalue problems in cylindrical domains becoming unbounded in one direction}

    \author{Rama Rawat}
    \email{rrawat@iitk.ac.in}

    \author{Haripada Roy}
    \email{haripada@iitk.ac.in}

    \author{Prosenjit Roy}
    \email{prosenjit@iitk.ac.in}
    
    \address{Indian Institute of Technology Kanpur, India}

    \keywords{p-Laplacian; uniform elliptcity; Poincar\'e inequality; Krasnoselskii’s genus}.
    \subjclass[2023]{35P15; 35P30; 35B38}
    \date{}

    \smallskip
    \begin{abstract}
    The aim of this work is to characterize the asymptotic behaviour of the first eigenfunction  of the generalised $p$-Laplace operator with mixed (Dirichlet and Neumann) boundary conditions in cylindrical domains when the length of the cylindrical domains tends to infinity. This generalises an earlier work of Chipot et.al. \textit{[Asymptotics of eigenstates of elliptic problems
with mixed boundary data on domains tending to infinity. Asymptot. Anal., 85(3-4):199–227,
2013]} where the linear case $p=2$ is studied.  Asymptotic behavior of all the higher eigenvalues of the linear case and the second eigenvalues of general case (using topological degree) for such problems is also studied.  
    \end{abstract}
    
    \maketitle
    

\section{Introduction}
The asymptotic analysis of problems set in cylindrical domains which become unbounded in one or several directions for various partial differential equations has been carried out by several authors since the pioneering work of Chipot \cite{ChPrIt}. In this paper, we revisit one of these problems which is set in cylindrical domains becoming unbounded in one direction for the generalised p-Laplacian with mixed boundary conditions. This continues the work in \cite{ChPrIt}, which concerns the asymptotic analysis of the linear case $p=2$ with mixed (Dirichlet-Neumann) boundary conditions. We now briefly describe the setting.

Consider $\Omega_\ell=(-\ell,\ell)\times\omega$, where $\ell$ is a positive real number and $\omega$ is an open bounded set in $\mathbb{R}^{n-1}$, $n\geq2$. Any generic point $x\in \mathbb{R}^n$ will be denoted by $x=(x_1,X_2)$, where $x_1\in \mathbb{R}$ and $X_2\in\mathbb{R}^{n-1}$. Then for each $x_1\in (-\ell,\ell)$, $\omega$ is the cross-section of $\Omega_\ell$ orthogonal to $x_1$-direction. Let $A$ be a $n\times n$ symmetric matrix of the form
$$A(X_2):=\begin{pmatrix}
a_{11}(X_2) & A_{12}(X_2)\\
A_{12}^t(X_2) & A_{22}(X_2)
\end{pmatrix},  \ X_2\in \omega,$$
where $A_{22}$ is an $(n-1)\times(n-1)$ matrix and assume that $A$ is uniformly bounded and uniformly elliptic on $\mathbb{R}\times\omega$ ( precise definitions are given in next section). For $p\geq2$ consider the following eigenvalue problem on $\Omega_\ell$ for the generalised p-Laplacian with Dirichlet boundary condition
\begin{equation}\label{dir}
    \begin{cases}
    -\textrm{div}\left(|A(X_2)\nabla u_\ell\cdot \nabla u_\ell|^{\frac{p-2}{2}}A(X_2)\nabla u_\ell\right)=\lambda_{D}(\Omega_\ell)|u_\ell|^{p-2}u_\ell \ \textrm{in} \ \Omega_{\ell},\\
    u_\ell=0 \ \textrm{on} \ \partial\Omega_{\ell},
    \end{cases}
\end{equation}
and the corresponding eigenvalue problem
\begin{equation}\label{cross}
\begin{cases}
 -\textrm{div}\left(|A_{22}(X_2)\nabla_{X_2} u\cdot \nabla_{X_2} u|^{\frac{p-2}{2}}A_{22}(X_2)\nabla_{X_2} u\right)=\mu(\omega)|u|^{p-2}u\ \textrm{in} \ \omega,\\
    u=0\ \textrm{on} \  \partial\omega,
    \end{cases}
\end{equation}
defined on the cross section $\omega$, where $\nabla_{X_2}=(\frac{\partial}{\partial x_2},\frac{\partial}{\partial x_3},\cdots,\frac{\partial}{\partial x_n})$.\smallskip

For the linear case $p=2,$ it is well known that each of the problems \eqref{dir} and \eqref{cross} admit an infinite set of positive discrete eigenvalues tending to infinity (see \cite{Kesavan}, Theorem 3.6.1). Let $\lambda_{D}^k(\Omega_\ell)$ and $\mu^k(\omega)$ denote the $k$-th eigenvalues of \eqref{dir} and \eqref{cross} respectively in this case. In \cite{ChRo}, Theorem 2.4, it was shown that as $\ell$ goes to infinity, the $k$-th eigenvalue $\lambda_{D}^k(\Omega_\ell)$  of \eqref{dir}  converges to the first eigenvalue $\mu^1(\omega)$ of \eqref{cross} with the optimal convergence rate of $\frac{1}{\ell^2}$. It was further proved that under appropriate scaling, the eigenfunctions corresponding to the first eigenvalues of \eqref{dir} converge to the eigenfunction corresponding to the first eigenvalue of \eqref{cross} in the appropriate function space (see \cite{ChRo}, Theorem 3.4). In general for $p\geq2$, the first eigenvalue of \eqref{dir} and \eqref{cross} can be obtained by appropriate minimization problems (described later in \eqref{2.6} and \eqref{2.7}), we continue to denote them by $\lambda_{D}^1(\Omega_\ell)$ and $\mu^1(\omega)$ respectively, the value of $p$ will be clear from the context. In \cite{EsLuPrFi}, the following theorem  was proved. 

\begin{theorem*}\emph{\cite{EsLuPrFi}}
 Let $p\geq2$. Then $\exists$ a constant $C$ depends only on $A$, $\omega$ and $p$, such that
 \begin{equation}\label{result firoj 1}
 \mu^1(\omega)\leq \lambda_{D}^1(\Omega_\ell)\leq\mu^1(\omega)+\frac{C}{\ell}.
    \end{equation}
\end{theorem*}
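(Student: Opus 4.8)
Recall that both first eigenvalues are given by Rayleigh quotients:
\[
\lambda_D^1(\Omega_\ell)=\inf_{v\in W_0^{1,p}(\Omega_\ell)\setminus\{0\}}\frac{\int_{\Omega_\ell}\left(A(X_2)\nabla v\cdot\nabla v\right)^{p/2}dx}{\int_{\Omega_\ell}|v|^p\,dx},
\]
and $\mu^1(\omega)$ is the analogous infimum over $W_0^{1,p}(\omega)\setminus\{0\}$ with $A$ replaced by $A_{22}$ and $\nabla$ by $\nabla_{X_2}$. The two inequalities will be proved separately: the lower one by estimating the quotient from below on every admissible $v$, the upper one by a well-chosen test function.

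For the lower bound $\mu^1(\omega)\le\lambda_D^1(\Omega_\ell)$ I would slice. Given $v\in W_0^{1,p}(\Omega_\ell)\setminus\{0\}$, for a.e.\ $x_1\in(-\ell,\ell)$ the section $v(x_1,\cdot)$ lies in $W_0^{1,p}(\omega)$; moreover, by the standing assumptions on $A$ (which are exactly what makes $A_{22}$ the relevant cross-sectional matrix — in particular the form $\xi\mapsto A(X_2)\xi\cdot\xi$ dominates $\Xi_2\mapsto A_{22}(X_2)\Xi_2\cdot\Xi_2$) one has the pointwise bound $\left(A\nabla v\cdot\nabla v\right)^{p/2}\ge\left(A_{22}\nabla_{X_2}v\cdot\nabla_{X_2}v\right)^{p/2}$ on $\Omega_\ell$. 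Integrating over $\omega$ and invoking the variational characterisation of $\mu^1(\omega)$ on the section gives $\int_\omega\left(A\nabla v\cdot\nabla v\right)^{p/2}dX_2\ge\mu^1(\omega)\int_\omega|v(x_1,\cdot)|^p\,dX_2$; integrating this in $x_1$ and taking the infimum over $v$ yields the claim, for every $\ell>0$.

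For the upper bound I would use the separated test function $v_\ell(x_1,X_2)=\phi(x_1)\,w(X_2)$, where $w$ is a first eigenfunction of \eqref{cross} normalised by $\|w\|_{L^p(\omega)}=1$ and $\phi\in W_0^{1,\infty}(-\ell,\ell)$ is the trapezoidal cut-off equal to $1$ on $(-\ell+1,\ell-1)$, affine on each unit-length end interval, and vanishing at $\pm\ell$; then $v_\ell\in W_0^{1,p}(\Omega_\ell)$ and $\int_{\Omega_\ell}|v_\ell|^p\,dx=\int_{-\ell}^{\ell}\phi^p\,dx_1\in[2\ell-2,\,2\ell]$. Split $\Omega_\ell$ into the core $(-\ell+1,\ell-1)\times\omega$, where $\phi\equiv1$, $\phi'\equiv0$ and hence $A\nabla v_\ell\cdot\nabla v_\ell=A_{22}\nabla_{X_2}w\cdot\nabla_{X_2}w$, so the core contributes exactly $(2\ell-2)\mu^1(\omega)$ to the numerator, and the two collars, on which I write
\[
A\nabla v_\ell\cdot\nabla v_\ell=\phi^2\,A_{22}\nabla_{X_2}w\cdot\nabla_{X_2}w+\bigl(a_{11}(\phi')^2w^2+2\phi'\phi\,w\,A_{12}\cdot\nabla_{X_2}w\bigr)=:X+Y,\qquad X\ge0,
\]
and apply the elementary inequality $|X+Y|^{p/2}\le X^{p/2}+c_p\bigl(X^{p/2-1}|Y|+|Y|^{p/2}\bigr)$, valid for $p\ge2$, $X\ge0$. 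The integral of $X^{p/2}$ over the collars is $\bigl(\int_{\text{collars}}\phi^p\,dx_1\bigr)\mu^1(\omega)\le2\mu^1(\omega)$, while the two remaining collar integrals are estimated, using $|\phi|,|\phi'|\le1$, the boundedness of $A$, H\"older's inequality and $w,\nabla_{X_2}w\in L^p(\omega)$, by a constant $C_1=C_1(A,\omega,p)$ (note $w$ itself depends only on $A,\omega,p$). Thus the numerator is at most $2\ell\,\mu^1(\omega)+C_1$, and for $\ell\ge2$
\[
\lambda_D^1(\Omega_\ell)\le\frac{2\ell\,\mu^1(\omega)+C_1}{2\ell-2}=\mu^1(\omega)+\frac{2\mu^1(\omega)+C_1}{2\ell-2}\le\mu^1(\omega)+\frac{C}{\ell},\qquad C:=2\mu^1(\omega)+C_1;
\]
this is the regime of interest, and any fixed bounded range of $\ell$ is absorbed into $C$ by domain monotonicity ($\lambda_D^1(\Omega_\ell)\le\lambda_D^1(\Omega_1)$).

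I expect the collar estimate to be the crux when $p>2$. The cross term $Y$, coupling $\phi'$, $A_{12}$ and $\nabla_{X_2}w$, does not vanish, and since $t\mapsto t^{p/2}$ is superadditive it cannot simply be discarded; the elementary inequality above is what isolates the clean part $X^{p/2}$ with coefficient exactly $1$ — a $(1+\varepsilon)$-loss there would be fatal, since it would multiply $\mu^1(\omega)$. The second, more structural point is that because the denominator grows like $2\ell$ it suffices for the collar error to be $O(1)$ in $\ell$ rather than $o(1)$, and this is guaranteed precisely by keeping the collar width fixed; letting it grow with $\ell$ would instead leave a factor strictly larger than $1$ in front of $\mu^1(\omega)$.
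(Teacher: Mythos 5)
Your upper bound is essentially correct (separated test function $\phi(x_1)w(X_2)$ with a unit-width trapezoidal cutoff, the elementary inequality $(a+b)^q\le a^q+q2^{q-1}(a^{q-1}b+b^q)$ to isolate the clean core contribution, and the collar error estimated by an $\ell$-independent constant); this is the standard route. But your lower bound argument has a genuine gap. You claim the pointwise domination
\[
A\xi\cdot\xi \;\ge\; A_{22}\Xi_2\cdot\Xi_2 \qquad\text{for }\xi=(\xi_1,\Xi_2),
\]
and this is simply false unless $A_{12}\cdot\Xi_2=0$. Indeed $A\xi\cdot\xi=a_{11}\xi_1^2+2\xi_1(A_{12}\cdot\Xi_2)+A_{22}\Xi_2\cdot\Xi_2$, and minimising over $\xi_1$ (take $\xi_1=-A_{12}\cdot\Xi_2/a_{11}$) gives
\[
\min_{\xi_1}A\xi\cdot\xi \;=\; A_{22}\Xi_2\cdot\Xi_2-\frac{(A_{12}\cdot\Xi_2)^2}{a_{11}} \;<\; A_{22}\Xi_2\cdot\Xi_2
\]
whenever $A_{12}\cdot\Xi_2\neq 0$. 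Positive definiteness of $A$ guarantees that this Schur-complement form is positive; it does not make the full form dominate $A_{22}$ on cross-sections. So slicing in $x_1$ and applying a pointwise comparison cannot close the inequality when $A_{12}\not\equiv 0$, and the sentence in parentheses justifying the claim is precisely where the argument breaks.

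The lower bound really does require a global mechanism, not a pointwise one. For $p=2$ the cross term $\int_{\Omega_\ell}2u_{x_1}\,A_{12}\cdot\nabla_{X_2}u\,dx$ integrates out to zero: since $A_{12}$ is independent of $x_1$ and $u$ has zero trace on $\{\pm\ell\}\times\omega$ and on $\gamma_\ell$, one can write the integrand as an $x_1$-derivative (via $2uu_{x_1}=\partial_{x_1}(u^2)$) and integrate; the remaining term is then $\int a_{11}u_{x_1}^2\ge 0$, and slicing the $A_{22}$-term alone gives $\lambda_D^1(\Omega_\ell)\ge\mu^1(\omega)$. For $p>2$ this cancellation sits under a power $p/2$ and no longer separates cleanly, which is the actual technical content of the lower bound in [EsLuPrFi]; your proposal does not address it. (Note the theorem you are proving is cited from [EsLuPrFi] and not re-proved in this paper, so there is no in-paper proof to compare against; the issue above is with the correctness of your argument itself.)
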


 We now describe the problem which will be our main focus in this paper.  Consider the following eigenvalue problem for the  generalised p-Laplacian with  mixed (Dirichlet and Neumann) boundary conditions:

\begin{equation}\label{mix}
    \begin{cases}
    -\textrm{div}\left(|A(X_2)\nabla u_\ell\cdot \nabla u_\ell|^{\frac{p-2}{2}}A(X_2)\nabla u_\ell\right)=\lambda_{M}(\Omega_\ell)|u_\ell|^{p-2}u_\ell\ \textrm{in} \ \Omega_{\ell},\\
    u_\ell=0 \ \textrm{on} \  \gamma_\ell:=(-\ell,\ell)\times\partial\omega,\\
    \big(A(X_2)\nabla u_\ell\big)\cdot \nu=0 \ \textrm{on} \ \Gamma_\ell:=\{-\ell,\ell\}\times\omega,
    \end{cases}
\end{equation}
where $\nu$ denotes the outward unit normal to $\Gamma_\ell$. 


In the linear case $p=2$, where the set of all eigenvalues of \eqref{mix} is well understood (see \cite{LoVlRo}) a similar asymptotic analysis of the first and second eigenvalue was carried out in \cite{ChPrIt}. It was shown there that under an additional condition, the first eigenvalue $\lambda_{M}^1(\Omega_\ell)$ of \eqref{mix} converges to a limit as $\ell$ goes to infinity but in this case the limit is strictly smaller than the first eigenvalue $\mu^1(\omega)$ of  \eqref{cross}. In particular there is a gap in the limiting behaviour of $\lambda_{M}^1(\Omega_\ell)$ and $\mu^1(\omega).$ This is in  sharp contrast to the limiting behavior of  eigenvalue problem with Dirichlet boundary condition \eqref{dir} as discussed before. Further it was proved that under a symmetry condition on the matrix $A$, the second eigenvalue $\lambda_{M}^2(\Omega_\ell)$ of \eqref{mix} has the same limit as that of the first eigenvalue $\lambda_{M}^1(\Omega_\ell)$ of \eqref{mix} as $\ell$ goes to infinity.

This was followed up in \cite{EsLuPrFi} where it was shown that  the gap phenomenon continues to hold for the limit superior of the first eigenvalues of \eqref{mix}  for the nonlinear case  $p>2$ under a similar additional condition. To summarize these results:
\smallskip 

\begin{theorem*}\emph{\cite{ChPrIt,EsLuPrFi}}\label{theorem itai and firoj}
 Let $p\geq2$ and let $W$  be the eigenfunction corresponding to the eigenvalue $\mu^1(\omega)$ such that $\int_{\omega}
 |W |^p = 1.$ Then\\
\emph{(i)} For $p=2$,
\begin{equation}\label{result Itai}
\lim_{\ell\rightarrow\infty}\lambda_{M}^1(\Omega_\ell)<\mu^1(\omega),
\end{equation}
provided $A_{12}\cdot \nabla_{X_2}W\not\equiv0$ a.e. on $\omega$, otherwise $\lambda_{M}^1(\Omega_\ell)=\mu^1(\omega)$ for all $\ell>0$. Furthermore, if $A$ satisfies the symmetry (S) (see Definition \ref{definition S} later), then $$\lim_{\ell\rightarrow\infty}\lambda_{M}^2(\Omega_\ell)=\lim_{\ell\rightarrow\infty}\lambda_{M}^1(\Omega_\ell).$$
\emph{(ii)} For $p>2$, one has
\begin{equation}\label{result firoj 2}
\limsup_{\ell\rightarrow\infty}\lambda_{M}^1(\Omega_\ell)<\mu^1(\omega)
\end{equation}
provided $A_{12}\cdot \nabla_{X_2}W\not\equiv0$ a.e. on $\omega$, otherwise $\lambda_{M}^1(\Omega_\ell)=\mu^1(\omega)$ for all $\ell>0$.
\end{theorem*}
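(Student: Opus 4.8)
The plan is to argue throughout via the variational characterizations
\[
\lambda_M^1(\Omega_\ell)=\inf_{u\in V_\ell\setminus\{0\}}\frac{\int_{\Omega_\ell}\big|A\nabla u\cdot\nabla u\big|^{p/2}}{\int_{\Omega_\ell}|u|^p},\qquad \mu^1(\omega)=\inf_{v\in W^{1,p}_0(\omega)\setminus\{0\}}\frac{\int_{\omega}\big|A_{22}\nabla_{X_2}v\cdot\nabla_{X_2}v\big|^{p/2}}{\int_{\omega}|v|^p},
\]
where $V_\ell=\{u\in W^{1,p}(\Omega_\ell):u=0\text{ on }\gamma_\ell\}$. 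The bound $\lambda_M^1(\Omega_\ell)\le\mu^1(\omega)$ holds for every $\ell$: the test function $u(x_1,X_2)=W(X_2)\in V_\ell$ has $\partial_1 W\equiv0$, so $A\nabla W\cdot\nabla W=A_{22}\nabla_{X_2}W\cdot\nabla_{X_2}W$ and the factor $\int_{-\ell}^\ell dx_1$ cancels between numerator and denominator, making the quotient equal to $\mu^1(\omega)$.

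Next I would settle the dichotomy. If $A_{12}\cdot\nabla_{X_2}W\equiv0$, then $W$ solves \eqref{mix} on $\Omega_\ell$ with $\lambda=\mu^1(\omega)$: the $x_1$–component of the flux $\big|A_{22}\nabla_{X_2}W\cdot\nabla_{X_2}W\big|^{(p-2)/2}(A_{12}\cdot\nabla_{X_2}W)$ vanishes, so the divergence of the flux reduces to its $X_2$–part and \eqref{cross} is exactly the interior equation, while $(A\nabla W)\cdot\nu=\pm(A_{12}\cdot\nabla_{X_2}W)=0$ on $\Gamma_\ell$; since $W>0$ and only the first eigenvalue admits a sign–definite eigenfunction, $\lambda_M^1(\Omega_\ell)=\mu^1(\omega)$. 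Conversely, if $\lambda_M^1(\Omega_\ell)=\mu^1(\omega)$ then $W$ is a minimizer, hence a weak solution of \eqref{mix}; testing against $\varphi\in V_\ell$ and cancelling the $X_2$–part by \eqref{cross} leaves $\int_{\Omega_\ell}\big|A_{22}\nabla_{X_2}W\cdot\nabla_{X_2}W\big|^{(p-2)/2}(A_{12}\cdot\nabla_{X_2}W)\,\partial_1\varphi\,dx=0$ for all $\varphi\in V_\ell$, and the choice $\varphi(x_1,X_2)=\tfrac{x_1+\ell}{2\ell}\beta(X_2)$ with $\beta\in W^{1,p}_0(\omega)$ arbitrary turns this into $\int_\omega\big|A_{22}\nabla_{X_2}W\cdot\nabla_{X_2}W\big|^{(p-2)/2}(A_{12}\cdot\nabla_{X_2}W)\,\beta=0$, which forces $A_{12}\cdot\nabla_{X_2}W\equiv0$ (the weight is positive a.e.\ wherever $\nabla_{X_2}W\neq0$, hence on any set where $A_{12}\cdot\nabla_{X_2}W\neq0$). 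In particular $A_{12}\cdot\nabla_{X_2}W\not\equiv0$ already yields $\lambda_M^1(\Omega_\ell)<\mu^1(\omega)$ for every fixed $\ell$.

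The substantive part is that this gap does not close as $\ell\to\infty$. For $p=2$ I would exhibit $\ell$–uniform upper bounds: the family $u(x_1,X_2)=e^{\kappa x_1}\phi(X_2)$ with $\phi\in W^{1,p}_0(\omega)$, $\kappa\in\R$ has $e^{2\kappa x_1}$ cancelling, so $\lambda_M^1(\Omega_\ell)\le\big(\int_\omega[\kappa^2 a_{11}\phi^2+2\kappa\,\phi\,A_{12}\cdot\nabla_{X_2}\phi+A_{22}\nabla_{X_2}\phi\cdot\nabla_{X_2}\phi]\big)\big/\int_\omega\phi^2$ independently of $\ell$, and sharper $\ell$–uniform bounds come from $x_1$–profiles matched to an associated limit problem on a semi–infinite cylinder with a Neumann end, whose ground–state energy equals $\lim_\ell\lambda_M^1(\Omega_\ell)$ and lies strictly below $\mu^1(\omega)$ precisely by the obstruction above — $W$ cannot be that ground state because the Neumann boundary layer it fails to cancel carries a definite energy shift exactly when $A_{12}\cdot\nabla_{X_2}W\not\equiv0$; the matching lower bound uses compactness of the eigenfunctions rescaled in $y_1=x_1/\ell$ together with weak lower semicontinuity. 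The assertion on $\lambda_M^2$ under (S) then follows by running the same scheme in the odd–in–$x_1$ sector, where $\lambda_M^2(\Omega_\ell)$ is the bottom eigenvalue and the minimizing sequence is again governed by the same limit problem. For $p>2$ one replaces spectral theory by expanding $F(t):=\int_{\Omega_\ell}\big|A\nabla(W+tv)\cdot\nabla(W+tv)\big|^{p/2}-\mu^1(\omega)\int_{\Omega_\ell}|W+tv|^p$ in $t$: $F(0)=0$ and the coefficient of $t$ is exactly $p\int_{\Omega_\ell}\big|A_{22}\nabla_{X_2}W\cdot\nabla_{X_2}W\big|^{(p-2)/2}(A_{12}\cdot\nabla_{X_2}W)\,\partial_1 v$, the same cross term as for $p=2$, so choosing $v=v_\ell$ with the $x_1$–profile dictated by the limit problem and $t$ small but $\ell$–independent makes $F(t)$ negative with size bounded away from $0$ uniformly in $\ell$, giving $\limsup_\ell\lambda_M^1(\Omega_\ell)<\mu^1(\omega)$.

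I expect the crux to be exactly this last step — preventing the cross–term gain from being diluted by $\int_{\Omega_\ell}|u_\ell|^p\sim\ell$. Because $\int_{\Omega_\ell}(A_{12}\cdot\nabla_{X_2}W)\,\partial_1 v\,dx=\int_\omega(A_{12}\cdot\nabla_{X_2}W)\big(v(\ell,\cdot)-v(-\ell,\cdot)\big)\,dX_2$ is a boundary term in $x_1$, a correction $v$ supported near the ends only lowers $\lambda_M^1(\Omega_\ell)$ by $O(1/\ell)$; one genuinely needs the non–localized profile furnished by the limit problem, and the rigorous passage to that limit — compactness and semicontinuity for $p=2$, together with the uniform coercivity that renders the higher–order terms of $F$ harmless for $p>2$ — is the technical heart of \cite{ChPrIt,EsLuPrFi}.
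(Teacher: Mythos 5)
Your argument splits into two distinct parts, and they deserve very different assessments.

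The first part — the dichotomy at \emph{fixed} $\ell$ — is correct and cleanly executed. The direct direction ($A_{12}\cdot\nabla_{X_2}W\equiv0$ implies $W$ itself solves \eqref{mix} with eigenvalue $\mu^1(\omega)$, hence by simplicity $\lambda_M^1(\Omega_\ell)=\mu^1(\omega)$) and the converse (test the Euler–Lagrange equation for $W$ against $\varphi=\tfrac{x_1+\ell}{2\ell}\beta(X_2)$, cancel the cross-sectional piece via \eqref{cross-weak}, and conclude $|A_{22}\nabla_{X_2}W\cdot\nabla_{X_2}W|^{(p-2)/2}(A_{12}\cdot\nabla_{X_2}W)\equiv0$, hence $A_{12}\cdot\nabla_{X_2}W\equiv0$) are both sound; this gives $\lambda_M^1(\Omega_\ell)<\mu^1(\omega)$ for every finite $\ell$ whenever $A_{12}\cdot\nabla_{X_2}W\not\equiv0$.

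The second part — that the gap does not close as $\ell\to\infty$ — is where the proof has a genuine hole, and you are candid about it. Your $\ell$-uniform exponential test function $e^{\kappa x_1}W(X_2)$ gives the bound $\lambda_M^1(\Omega_\ell)\le\mu^1(\omega)+\kappa^2\int_\omega a_{11}W^2+2\kappa\int_\omega W\,A_{12}\cdot\nabla_{X_2}W$, which is strictly below $\mu^1(\omega)$ for suitable small $\kappa$ \emph{only when} $\int_\omega W\,A_{12}\cdot\nabla_{X_2}W\neq0$. The hypothesis is the weaker $A_{12}\cdot\nabla_{X_2}W\not\equiv0$, and the degenerate case $\int_\omega W\,A_{12}\cdot\nabla_{X_2}W=0$ is exactly where this simple device fails. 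At that point you appeal to a ``non-localized profile furnished by the limit problem on a semi-infinite cylinder'' without constructing it or proving its ground-state energy is $<\mu^1(\omega)$. That step is not a technicality to be filled in later: it is the actual content of the theorem. The construction in the literature (and reproduced in this paper's Theorem \ref{theorem gap holds for mu} together with Remark \ref{remark 3}) uses a test function of the shape $W(X_2)-x_1\rho_\ell(X_2)G_\epsilon(X_2)$ (with $G_\epsilon\to A_{12}\cdot\nabla_{X_2}W/a_{11}$) on a short piece of the cylinder, glued to an exponential tail $e^{-\alpha x_1}W(X_2)$; this produces an $O(1)$ energy defect $-\gamma_1<0$ before dividing, against a finite $L^p$-mass, and it does so for one of $\nu_\infty^+,\nu_\infty^-$ irrespective of the sign of $\int_\omega W\,A_{12}\cdot\nabla_{X_2}W$, after which Theorem \ref{theorem limit of first eigenvalue} identifies $\lim_\ell\lambda_M^1(\Omega_\ell)$ with $\min\{\nu_\infty^+,\nu_\infty^-\}$. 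Your ``crux'' paragraph also contains a misstep in framing: you note that $F'(0)$ is a boundary term and that perturbing $W+tv$ on the full cylinder only gains $O(1/\ell)$; that observation is correct, but it shows that the whole ansatz of perturbing \emph{around $W$ on $\Omega_\ell$} cannot give a uniform gap — the cure is to use a test function whose own $L^p$-mass is $O(1)$ (concentrated near one base), not a perturbation of the cross-sectional ground state spread over the entire length. Until that construction and its estimates are in place, the $p>2$ case, the $\lambda_M^2$ assertion under symmetry (S), and the strictness in the $p=2$ limit remain unproved.
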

A detailed discussion on the condition $A_{12}\cdot \nabla_{X_2}W\not\equiv 0$ is done in \cite{ChPrIt} [see, Remark 4.2].

\noindent On the way to the proof of the gap phenomenon in Theorem \ref{theorem itai and firoj} (i), several other interesting results were obtained in \cite{ChPrIt}. For example, behavior of the first normalised eigenfunction of \eqref{mix} near the end (on sets like $\Omega_\ell\setminus\Omega_{\ell-1}$), in the middle of the cylinder (sets like $\Omega_{\ell/2}$) were established. Also the value $\Lambda := \limsup_{\ell\rightarrow\infty}\lambda_M^1(\Omega_\ell)$ is identified with an appropriate problem on the semi-infinite cylinders. \smallskip

Similar questions remained unanswered for the case $p>2$, which we wish to take up in this paper.
Let $u_\ell$ be the normalised eigenfunction corresponding to the first eigenvalue $\lambda_M^1(\Omega_\ell)$ of \eqref{mix}. 
In our first result, we analyse the behavior of $u_\ell$ as $\ell$ goes to infinity and show that as in the case $p=2$ these eigenfunctions decay to zero in the middle of the cylinder and their mass concentrates near the base of the cylinder.
Throughout this paper, $[x]$ will denote the integer part of $x \in \mathbb{R}.$

\begin{theorem}\label{theorem decay}
Assume that $p\geq2$ and $A_{12}\cdot \nabla_{X_2}W\not\equiv0$ a.e in $\omega$. Then $\exists$ constants $\alpha\in (0,1)$ and $C_1,C_2\geq0$ depend only on $A$, $\omega$ and $p$ such that for $\ell>\ell_0$ we have for every $0<r\leq \ell-1$,
\begin{equation}\label{decay of grad}
    \int_{\Omega_r}|\nabla u_\ell|^p\leq C_1\alpha^{[\ell-r]},
\end{equation}
and
\begin{equation}\label{decay of u}
    \int_{\Omega_r}| u_\ell|^p\leq C_2\alpha^{[\ell-r]}.
\end{equation}
\end{theorem}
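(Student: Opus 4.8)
The plan is to prove a differential inequality (a "Caccioppoli-type" estimate) for the energy tail $E(r) := \int_{\Omega_r} |\nabla u_\ell|^p$ and then iterate it. The starting point is the weak formulation of \eqref{mix}: for every admissible test function $\varphi$ vanishing on $\gamma_\ell$ (but not necessarily on $\Gamma_\ell$, since there we have the natural Neumann condition),
\begin{equation*}
\int_{\Omega_\ell} |A\nabla u_\ell\cdot\nabla u_\ell|^{\frac{p-2}{2}} A\nabla u_\ell\cdot\nabla\varphi = \lambda_M^1(\Omega_\ell)\int_{\Omega_\ell} |u_\ell|^{p-2} u_\ell\,\varphi.
\end{equation*}
Fix $r\in(0,\ell-1)$ and choose a cutoff $\eta=\eta(x_1)$ that is $1$ on $[-r,r]$, $0$ outside $[-(r+1),r+1]$, and linear in between, and set $\varphi=\eta^p u_\ell$. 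Uniform ellipticity and boundedness of $A$ convert the left side into a lower bound of the form $\Lambda^{-1}\int \eta^p|\nabla u_\ell|^p$ minus a cross term $\int \eta^{p-1}|\nabla\eta|\,|\nabla u_\ell|^{p-1}|u_\ell|$, and on the right side the crucial input is that, by Theorem \ref{theorem itai and firoj}, $\lambda_M^1(\Omega_\ell)\le\Lambda<\mu^1(\omega)$ for all large $\ell$. Since $\mu^1(\omega)$ is exactly the best constant in the one-dimensional-in-$X_2$ Poincaré inequality $\mu^1(\omega)\int_\omega|v|^p\le\int_\omega|A_{22}\nabla_{X_2}v\cdot\nabla_{X_2}v|^{p/2}$ (this is \eqref{2.7}, which holds slicewise for $\eta u_\ell$ since it vanishes on $\partial\omega$), the zero-order term on the right can be absorbed into the leading gradient term on the left with a strictly positive gap $\delta:=1-\Lambda/\mu^1(\omega)>0$ to spare — after controlling the commutator coming from $\nabla(\eta u_\ell)$ versus $\eta\nabla u_\ell$, which again only costs a term supported on $\{r\le|x_1|\le r+1\}$.

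After absorbing, one is left with an inequality of the shape
\begin{equation*}
c\int_{\Omega_r}|\nabla u_\ell|^p \le C\int_{\Omega_{r+1}\setminus\Omega_r}\bigl(|\nabla u_\ell|^p + |u_\ell|^p\bigr),
\end{equation*}
and using the slicewise Poincaré inequality once more on the annular region to bound $\int_{\Omega_{r+1}\setminus\Omega_r}|u_\ell|^p$ by $C\int_{\Omega_{r+1}\setminus\Omega_r}|\nabla u_\ell|^p$, this becomes
\begin{equation*}
\int_{\Omega_r}|\nabla u_\ell|^p \le \theta\,\bigl(E(r+1)-E(r)\bigr) \quad\text{i.e.}\quad E(r)\le \frac{\theta}{1+\theta}\,E(r+1)
\end{equation*}
for a constant $\theta$ depending only on $A,\omega,p$. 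Setting $\alpha:=\theta/(1+\theta)\in(0,1)$ and iterating $[\ell-r]$ times down from $E(\ell-1)\le E(\ell)=\int_{\Omega_\ell}|\nabla u_\ell|^p$, which is bounded by a constant $C_1$ (since the normalized eigenfunctions have uniformly bounded energy — $\int_{\Omega_\ell}|A\nabla u_\ell\cdot\nabla u_\ell|^{p/2}=\lambda_M^1(\Omega_\ell)\le\Lambda$ and $A$ is uniformly elliptic), gives \eqref{decay of grad}. Then \eqref{decay of u} follows immediately by applying the slicewise Poincaré inequality \eqref{2.7} on $\Omega_r$: $\int_{\Omega_r}|u_\ell|^p\le C\int_{\Omega_r}|\nabla u_\ell|^p\le C C_1\alpha^{[\ell-r]}=:C_2\alpha^{[\ell-r]}$.

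The main obstacle is the absorption step: making sure the gap $\mu^1(\omega)-\Lambda>0$ genuinely dominates \emph{all} the error terms produced when one passes from $A\nabla u_\ell\cdot\nabla(\eta^p u_\ell)$ to a clean slicewise energy of $\eta u_\ell$. Concretely, after writing $\nabla(\eta u_\ell)=\eta\nabla u_\ell+u_\ell\nabla\eta$, expanding $|A\nabla(\eta u_\ell)\cdot\nabla(\eta u_\ell)|^{p/2}$ and comparing with $\eta^p|A\nabla u_\ell\cdot\nabla u_\ell|^{p/2}$, one gets cross terms of the form $\eta^{p-1}|\nabla\eta|\,|\nabla u_\ell|^{p-1}|u_\ell|$ and $|\nabla\eta|^p|u_\ell|^p$; these must be handled by Young's inequality with a small parameter $\varepsilon$, so that a fraction $\varepsilon$ of the leading gradient term is sacrificed while the remainder $\lambda_M^1/\mu^1(\omega)+C\varepsilon<1$ still leaves room. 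This forces a careful bookkeeping of constants but is standard once one commits to the hole-filling/iteration scheme; the nonlinearity $p>2$ only enters through the elementary inequality $|a+b|^{p/2}\le (1+\varepsilon)|a|^{p/2}+C_\varepsilon|b|^{p/2}$ and through Young's inequality with exponents $\frac{p}{p-1}$ and $p$, so no new analytic difficulty arises relative to the case $p=2$ treated in \cite{ChPrIt}. One should also note at the outset that by symmetry of $\Omega_\ell$ and uniqueness of the first eigenfunction one may take $u_\ell\ge0$, though this is not strictly needed for the energy estimates.
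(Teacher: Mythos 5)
Your overall architecture (cutoff, test in weak form, hole-filling, iterate) matches the paper's. But the mechanism you propose for creating the positive gap that drives the iteration does not work, and the paper uses a genuinely different device there.

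You propose to absorb the zero-order term $\lambda_M^1(\Omega_\ell)\int \eta^p|u_\ell|^p$ into the gradient term by invoking the slicewise Poincar\'e inequality \eqref{2.7}, whose sharp constant is $\mu^1(\omega)$, together with $\lambda_M^1(\Omega_\ell)\le\Lambda<\mu^1(\omega)$. The problem is a mismatch of quadratic forms. Inequality \eqref{2.7} gives
\begin{equation*}
\mu^1(\omega)\int_{\Omega_\ell}\eta^p|u_\ell|^p
\;\le\;
\int_{\Omega_\ell}\eta^p\left|A_{22}\nabla_{X_2}u_\ell\cdot\nabla_{X_2}u_\ell\right|^{p/2},
\end{equation*}
whereas the leading term you want to absorb into is $\int\eta^p\left|A\nabla u_\ell\cdot\nabla u_\ell\right|^{p/2}$ (or, after ellipticity, $\lambda^{p/2}\int\eta^p|\nabla u_\ell|^p$). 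There is no pointwise inequality $A_{22}\xi_2\cdot\xi_2\le A\xi\cdot\xi$ in general: since $A\xi\cdot\xi = a_{11}\xi_1^2 + 2(A_{12}\cdot\xi_2)\xi_1 + A_{22}\xi_2\cdot\xi_2$, choosing $\xi_1$ near $-a_{11}^{-1}A_{12}\cdot\xi_2$ makes $A\xi\cdot\xi$ drop strictly \emph{below} $A_{22}\xi_2\cdot\xi_2$ whenever $A_{12}\cdot\xi_2\neq0$. This is not a pathological case; it is exactly the mechanism that lets the mixed eigenfunction push $\lambda_M^1(\Omega_\ell)$ below $\mu^1(\omega)$ in the first place, so near the ends of the cylinder one expects $A\nabla u_\ell\cdot\nabla u_\ell < A_{22}\nabla_{X_2}u_\ell\cdot\nabla_{X_2}u_\ell$. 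The only uniform comparison available goes the wrong way, with a constant $(\|A\|_\infty/\lambda)^{p/2}\ge1$ that cannot be controlled by the gap $1-\Lambda/\mu^1(\omega)$. So the crucial "room to spare'' you claim does not materialize.

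The paper avoids this by never comparing against the cross-section form $A_{22}$ at all. It plugs $\rho_{\ell'}u_\ell\in W_0^{1,p}(\Omega_\ell)$ into the \emph{Dirichlet} Rayleigh quotient \eqref{2.6}, whose quadratic form is the full $A$, obtaining $\lambda_D^1(\Omega_\ell)\int\rho_{\ell'}^p|u_\ell|^p\le\int|A\nabla(\rho_{\ell'}u_\ell)\cdot\nabla(\rho_{\ell'}u_\ell)|^{p/2}$. Expanding the right side and substituting the mixed weak form \eqref{mix-weak} with test function $\rho_{\ell'}^p u_\ell$ converts the leading term back into $\lambda_M^1(\Omega_\ell)\int\rho_{\ell'}^p|u_\ell|^p$ plus error terms supported on the annulus $D_{\ell'}$, so the factor in front of $\int\rho_{\ell'}^p|u_\ell|^p$ becomes $\lambda_D^1(\Omega_\ell)-\lambda_M^1(\Omega_\ell)$. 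This is $\ge\mu^1(\omega)-\lambda_M^1(\Omega_\ell)>\beta'>0$ for $\ell>\ell_0$ by \eqref{result firoj 1} and \eqref{result firoj 2}. Because the Dirichlet and mixed quotients both involve the same form $A$ applied to the same function $u_\ell$, no comparison between $A$ and $A_{22}$ is ever needed; the only place Poincar\'e enters is \eqref{poincare} with the nonsharp constant $C_p$, used purely to control the annulus error terms. This is the idea your proposal is missing, and without it the absorption step, and hence the iteration, does not close.
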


\smallskip

 The following identity 
$$A\nabla(u v)\cdot\nabla(u v)-u^2\left(A\nabla v\cdot\nabla v\right)= A\nabla u \cdot \nabla (uv^2) $$
plays a crucial role in proofs for the case $p=2$. For general $p$, we do not have this kind of identity. To overcome this difficulty we  use the uniformly ellipticity condition on the matrix $A$ and the Poincar\'e inequality (see \eqref{uniform ellip} and \eqref{poincare} in Section 2). Other complication arises because of the fact, that for  $p=2$ the spaces involved are Hilbert spaces which is not the case for general $p$. Our next result is crucial for understanding the value $\Lambda := \limsup_{\ell\rightarrow\infty}\lambda_M^1(\Omega_\ell)$. 

 Consider the semi-infinite cylinders 
$$\Omega_{\infty}^+=(0,\infty)\times \omega\ \mathrm{and} \ \Omega_{\infty}^-=(-\infty,0)\times \omega$$ with the boundaries
$$\gamma_{\infty}^+=(0,\infty)\times \partial\omega,\ \ \gamma_{\infty}^-=(-\infty,0)\times\partial \omega,$$
and the quantities
\begin{equation}\label{nu infinity}
    \nu_{\infty}^{\pm}=\inf_{0\neq u\in V(\Omega_{\infty}^{\pm})}\frac{\int_{\Omega_{\infty}^{\pm}}|A\nabla u\cdot \nabla u|^{\frac{p}{2}}}{\int_{\Omega_{\infty}^{\pm}}|u|^p},
\end{equation}
where the spaces $V(\Omega_{\infty}^{\pm})$ are defined by
$$V(\Omega_{\infty}^{\pm})=\{u\in W^{1,p}(\Omega_{\infty}^{\pm}): u=0\ \textrm{on} \ \gamma_{\infty}^{\pm}\}.$$
Detailed description of these spaces is given in Section 2. 



\begin{theorem}\label{theorem limit of first eigenvalue}
Assume that $p\geq2.$ Then
\begin{equation}\label{limit of first eigenvalue}
    \lim_{\ell \rightarrow\infty}\lambda_M^1(\Omega_\ell )=\min \left\{ \nu_{\infty}^+,\nu_{\infty}^-\right\}.
\end{equation}
\end{theorem}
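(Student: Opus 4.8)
The plan is to prove separately that $\limsup_{\ell\to\infty}\lambda_M^1(\Omega_\ell)\le\min\{\nu_\infty^+,\nu_\infty^-\}$ and $\liminf_{\ell\to\infty}\lambda_M^1(\Omega_\ell)\ge\min\{\nu_\infty^+,\nu_\infty^-\}$; together these give both the existence of the limit in \eqref{limit of first eigenvalue} and its value. The structural fact used throughout is that $A=A(X_2)$ is independent of $x_1$, so every Rayleigh quotient below is invariant under translations in the $x_1$ direction.

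\emph{Upper bound.} Given $\varepsilon>0$, pick $u\in V(\Omega_\infty^+)$ with Rayleigh quotient below $\nu_\infty^++\varepsilon/2$. Multiplying $u$ by a Lipschitz cutoff $\eta(x_1)$ equal to $1$ on $[0,T]$ and vanishing for $x_1\ge T+1$ gives $\eta u\to u$ in $W^{1,p}(\Omega_\infty^+)$ as $T\to\infty$, so by continuity of the Rayleigh quotient $\eta u$ is $\varepsilon$-optimal for $T$ large and is supported in some $[0,R]\times\omega$. For $\ell>R$ the translate $(\eta u)(x_1+\ell,X_2)$, extended by zero, lies in $V(\Omega_\ell)$ (it vanishes on $\gamma_\ell$, and near the interface $x_1=R-\ell$ since the trace of $\eta u$ there is $0$) and has the same Rayleigh quotient; hence $\lambda_M^1(\Omega_\ell)\le\nu_\infty^++\varepsilon$ for all $\ell>R$. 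Letting $\ell\to\infty$ and then $\varepsilon\to0$ yields $\limsup_\ell\lambda_M^1(\Omega_\ell)\le\nu_\infty^+$, and translating towards $x_1=+\ell$ instead yields $\limsup_\ell\lambda_M^1(\Omega_\ell)\le\nu_\infty^-$.

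\emph{Lower bound.} Testing the variational characterization of $\lambda_M^1(\Omega_\ell)$ with $W(X_2)$ regarded as constant in $x_1$ (so that $A\nabla W\cdot\nabla W=A_{22}\nabla_{X_2}W\cdot\nabla_{X_2}W$) gives $\lambda_M^1(\Omega_\ell)\le\mu^1(\omega)$ for all $\ell$; together with uniform ellipticity this bounds $\int_{\Omega_\ell}|\nabla u_\ell|^p$ by a constant $M$ independent of $\ell$, where $u_\ell$ is the normalized first eigenfunction. Choose $\ell_k\to\infty$ with $\lambda_M^1(\Omega_{\ell_k})\to L:=\liminf_\ell\lambda_M^1(\Omega_\ell)$. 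Since $\int_{\Omega_{\ell_k}}(|u_{\ell_k}|^p+|\nabla u_{\ell_k}|^p)\le 1+M$ while $(-\ell_k/2,\ell_k/2)$ contains about $\ell_k$ disjoint unit intervals, the pigeonhole principle supplies $t_k\in(-\ell_k/2,\ell_k/2)$ with
\[
\varepsilon_k:=\int_{(t_k,t_k+1)\times\omega}\big(|u_{\ell_k}|^p+|\nabla u_{\ell_k}|^p\big)\longrightarrow0 .
\]
Cut $\Omega_{\ell_k}$ along $\{x_1=t_k\}\times\omega$ and pass to a subsequence so that $m_k:=\int_{(t_k,\ell_k)\times\omega}|u_{\ell_k}|^p\to m\in[0,1]$ and the energies $E_k^+,E_k^-$ of the pieces $(t_k,\ell_k)\times\omega$ and $(-\ell_k,t_k)\times\omega$ converge to $e^+,e^-\ge0$, where $E_k^++E_k^-=\lambda_M^1(\Omega_{\ell_k})$, so $e^++e^-=L$. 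Translate the right piece by $-\ell_k$ (moving its Neumann face to $\{0\}\times\omega\subset\partial\Omega_\infty^-$), multiply by a cutoff $\chi_k$ that equals $1$ away from, and vanishes on a neighbourhood of, the image of $(t_k,t_k+1)$, and extend by zero to obtain $w_k\in V(\Omega_\infty^-)$. On the transition region, boundedness of $A$ together with $(a+b)^{p/2}\le 2^{p/2-1}(a^{p/2}+b^{p/2})$ (valid for $p\ge2$) bounds the integrand by a multiple of $|\nabla u_{\ell_k}|^p+|u_{\ell_k}|^p$, so the numerator of the Rayleigh quotient of $w_k$ exceeds $E_k^+$ by at most $C\varepsilon_k$ and its denominator is at least $m_k-\varepsilon_k$; hence $\nu_\infty^- m\le e^+$ when $m>0$. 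Symmetrically the left piece gives $\nu_\infty^+(1-m)\le e^-$ when $m<1$, while when $m\in\{0,1\}$ a single piece suffices. Adding,
\[
\min\{\nu_\infty^+,\nu_\infty^-\}\;\le\;\nu_\infty^- m+\nu_\infty^+(1-m)\;\le\;e^++e^-\;=\;L .
\]

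The main obstacle I expect is the construction and estimation of $w_k$ in the last paragraph: ensuring that the zero-extension of the cut-off translated eigenfunction genuinely lies in $W^{1,p}$ of the semi-infinite cylinder — which is clean once $\chi_k$ is arranged to vanish on an open neighbourhood of the cut, so no trace matching across the interface is needed — and controlling the numerator, where the cutoff-derivative term contributes only $O(\varepsilon_k)$; this is precisely where boundedness and uniform ellipticity of $A$ and the assumption $p\ge2$ are used. The exponential decay of Theorem~\ref{theorem decay} could alternatively be invoked to locate the separating slice $\{x_1=t_k\}$, but the pigeonhole argument above requires no hypothesis on $A_{12}\cdot\nabla_{X_2}W$ and so establishes \eqref{limit of first eigenvalue} for every $p\ge2$.
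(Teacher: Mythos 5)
Your proof is correct, and the upper bound is essentially the paper's own argument (density of compactly supported functions in $V(\Omega_\infty^+)$ plus translation invariance in $x_1$, which is the content of the paper's Lemma~\ref{lemma 4.2}). The lower bound, however, takes a genuinely different route. The paper cuts the normalized eigenfunction $u_\ell$ near the center of the cylinder and controls the cost of the cutoff by invoking the exponential decay of Theorem~\ref{theorem decay}; since that decay is driven by the spectral gap $\lambda_D^1-\lambda_M^1\ge\beta'>0$, it is only available when $A_{12}\cdot\nabla_{X_2}W\not\equiv0$, and the degenerate case $A_{12}\cdot\nabla_{X_2}W\equiv0$ must then be treated separately via Theorem~\ref{theorem gap holds for mu}(ii). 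You instead locate a good cutting slab by pigeonhole: the uniform energy bound $\int_{\Omega_\ell}(|u_\ell|^p+|\nabla u_\ell|^p)\le 1+M$ spread over the $\sim\ell_k$ disjoint unit slabs in $(-\ell_k/2,\ell_k/2)$ forces some slab $(t_k,t_k+1)\times\omega$ to carry $o(1)$ mass and energy, which is all that is needed to cut there at negligible cost. This is more elementary, requires no hypothesis on $A_{12}\cdot\nabla_{X_2}W$, avoids the dichotomy entirely, and decouples the present theorem from the machinery of Theorems~\ref{theorem decay} and~\ref{theorem gap holds for mu}. What you lose is quantitative: the paper's decay-based cut produces an exponentially small error $O(\alpha^{[\ell-1]})$, while pigeonhole only gives $O(1/\ell_k)$ along a subsequence; for the limit statement this is immaterial, but it would matter if one wanted a rate. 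One small presentational caveat: the phrase ``vanishes on a neighbourhood of the image of $(t_k,t_k+1)$'' should be read as ``$\chi_k=0$ near the cut end $\{x_1=t_k-\ell_k\}$ and transitions to $1$ inside the image of $(t_k,t_k+1)$, with $|\chi_k'|$ bounded independently of $k$''; with that reading the zero extension lands in $V(\Omega_\infty^-)$ and the transition cost is $O(\varepsilon_k)$ exactly as you claim.
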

We further provide a condition for which the gaps hold between $\nu_{\infty}^{\pm}$ and $\mu^1(\omega)$ and another condition for which the gaps fail. This was known for $p=2$ from \cite{ChPrIt}.

\begin{theorem}\label{theorem gap holds for mu}
\emph{(i)} Assume that $A_{12}\cdot\nabla_{X_2}W\not\equiv0$. If
\begin{equation}\label{gap holds for mu}
   \int_{\omega}|A_{22}\nabla_{X_2} W\cdot\nabla_{X_2} W|^{\frac{p-2}{2}}(A_{12}\cdot\nabla_{X_2} W)W\geq0,
\end{equation}
then $\nu_{\infty}^{+}< \mu_1(\omega)$.

\emph{(ii)} If
\begin{equation}\label{no gap for mu}
    A_{12}\cdot\nabla_{X_2} W\leq0\ \mathrm{a.e}\ \mathrm{in} \ \omega,
\end{equation}
then $\nu_{\infty}^{+}= \mu_1(\omega)$. Moreover in this case there is no minimizer realizing $\nu_{\infty}^+$.
\end{theorem}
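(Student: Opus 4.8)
Throughout set $a:=A_{22}\nabla_{X_2}W\cdot\nabla_{X_2}W$ and $b:=A_{12}\cdot\nabla_{X_2}W$, so that, since $\int_\omega|W|^p=1$, one has $\mu^1(\omega)=\int_\omega a^{p/2}$ and \eqref{gap holds for mu} reads $\int_\omega a^{(p-2)/2}bW\ge0$. The bound $\nu_\infty^+\le\mu^1(\omega)$, needed in both parts, comes from the separable competitor $u_\epsilon(x_1,X_2)=e^{-\epsilon x_1}W(X_2)\in V(\Omega_\infty^+)$: inserting it in \eqref{nu infinity} the $x_1$-integrals cancel, leaving $\nu_\infty^+\le R(\epsilon):=\int_\omega\big(a-2\epsilon bW+\epsilon^2a_{11}W^2\big)^{p/2}\,dX_2$, and $R(\epsilon)\to\int_\omega a^{p/2}=\mu^1(\omega)$ as $\epsilon\downarrow0$ by dominated convergence (for $0<\epsilon\le1$ the integrand is $\le C(|\nabla_{X_2}W|^p+|W|^p)\in L^1(\omega)$). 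For part (i) I would differentiate: $R'(0)=-p\int_\omega a^{(p-2)/2}bW$, i.e.\ $-p$ times the left side of \eqref{gap holds for mu}; hence if \eqref{gap holds for mu} is strict then $R'(0)<0$, so $R(\epsilon)<\mu^1(\omega)$ for small $\epsilon>0$ and $\nu_\infty^+<\mu^1(\omega)$.

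The remaining case of part (i), equality in \eqref{gap holds for mu} — which, since $A_{12}\cdot\nabla_{X_2}W\not\equiv0$, forces $b$ to change sign — is the delicate one: now $R'(0)=0$ and $R''(0)=p\int_\omega a^{(p-2)/2}a_{11}W^2+p(p-2)\int_\omega a^{(p-4)/2}W^2b^2>0$, so no separable competitor beats $\mu^1(\omega)$. I would test instead with the two-parameter family $u_{\epsilon,\delta}=e^{-\epsilon x_1}W+\delta\,x_1e^{-\epsilon x_1}V$, $V\in W_0^{1,p}(\omega)$ with $\int_\omega W^{p-1}V=0$; a direct expansion should give
\begin{equation*}
\frac{\int_{\Omega_\infty^+}|A\nabla u_{\epsilon,\delta}\cdot\nabla u_{\epsilon,\delta}|^{p/2}}{\int_{\Omega_\infty^+}|u_{\epsilon,\delta}|^p}=\mu^1(\omega)+c\,\epsilon^2+M(V)\,\delta+o\big(\epsilon^2+|\delta|\big),
\end{equation*}
with $c=\tfrac{1}{2}R''(0)>0$ and $M(V)$ an explicit linear functional of $V$; then choosing $V$ with $M(V)\neq0$ and $\delta:=-2c\,\epsilon^2/M(V)$ (of order $\epsilon^2$) gives a value $<\mu^1(\omega)$. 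The obstacle is to show $M$ does not vanish on $\{\int_\omega W^{p-1}V=0\}$: integrating by parts, this amounts to an explicit $X_2$-field not being pointwise proportional to $W^{p-1}$, and since one computes $M(W)=0$ it reduces to ruling out that this field vanishes identically — which I expect whenever $b\not\equiv0$, perhaps after also admitting a cross-sectional corrector of order $\epsilon$.

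For part (ii) the task, beyond $\nu_\infty^+\le\mu^1(\omega)$, is the reverse bound and non-attainment under \eqref{no gap for mu}. Given $0\neq u\in V(\Omega_\infty^+)$, for a.e.\ $x_1$ one has $u(x_1,\cdot)\in W_0^{1,p}(\omega)$, so the variational characterisation of $\mu^1(\omega)$ from \eqref{cross} gives $\int_\omega|A_{22}\nabla_{X_2}u\cdot\nabla_{X_2}u|^{p/2}\,dX_2\ge\mu^1(\omega)\int_\omega|u|^p\,dX_2$; integrating in $x_1$, $\int_{\Omega_\infty^+}|A_{22}\nabla_{X_2}u\cdot\nabla_{X_2}u|^{p/2}\ge\mu^1(\omega)\int_{\Omega_\infty^+}|u|^p$. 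It thus suffices to prove $\int_{\Omega_\infty^+}|A\nabla u\cdot\nabla u|^{p/2}\ge\int_{\Omega_\infty^+}|A_{22}\nabla_{X_2}u\cdot\nabla_{X_2}u|^{p/2}$; writing $x:=A\nabla u\cdot\nabla u\ge0$ and $y:=A_{22}\nabla_{X_2}u\cdot\nabla_{X_2}u\ge0$, the pointwise bound $x\ge y$ fails, but convexity of $t\mapsto t^{p/2}$ ($p\ge2$) gives $x^{p/2}\ge y^{p/2}+\tfrac{p}{2}y^{p/2-1}(x-y)$, reducing the claim to $\int_{\Omega_\infty^+}y^{(p-2)/2}\big(a_{11}(\partial_{x_1}u)^2+2\,\partial_{x_1}u\,(A_{12}\cdot\nabla_{X_2}u)\big)\ge0$. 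Here I would split $u(x_1,\cdot)=c(x_1)W+r(x_1,\cdot)$ with $\int_\omega W^{p-1}r(x_1,\cdot)=0$: the part of this integral that is not quadratically small in $r$ is a positive multiple of $\big(\int_0^\infty|c|^{p-2}cc'\,dx_1\big)\big(\int_\omega a^{(p-2)/2}bW\,dX_2\big)$, which is $\ge0$ because $\int_0^\infty|c|^{p-2}cc'\,dx_1=-\tfrac{1}{p}|c(0)|^p$ and $b\le0$, $W>0$; all the remaining bilinear terms are then absorbed — by Young's inequality and the cross-sectional Poincaré inequality for $r$ — into the nonnegative quantities $\int_{\Omega_\infty^+}a_{11}(\partial_{x_1}u)^2$ and the spectral-gap surplus $\int_{\Omega_\infty^+}\big(|A_{22}\nabla_{X_2}u\cdot\nabla_{X_2}u|^{p/2}-\mu^1(\omega)|u|^p\big)$, which controls $\int_{\Omega_\infty^+}|A_{22}\nabla_{X_2}r\cdot\nabla_{X_2}r|^{p/2}$ since $\mu^1(\omega)$ is simple and isolated. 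Carrying out this estimate while keeping track of constants so that the absorption closes is the technical heart of part (ii). Finally, a minimiser would force equality everywhere: equality in the sliced \eqref{cross} makes $u=c(x_1)W$, and retracing the above with $b\le0$ then forces $c'\equiv0$, hence $c\equiv0$ by integrability on $(0,\infty)$, contradicting $u\not\equiv0$; so $\nu_\infty^+$ is not attained.
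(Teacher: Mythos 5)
Your proof of part (i) in the \emph{strict} case is correct and pleasantly elementary: the one-sided derivative $R'(0)=-p\int_\omega a^{(p-2)/2}bW$ of the separable Rayleigh quotient immediately yields the gap whenever \eqref{gap holds for mu} is strict. But the theorem asserts the gap under the non-strict hypothesis $\ge 0$, and the equality case (which, as you note, must occur when $A_{12}\cdot\nabla_{X_2}W\not\equiv 0$ but $\int_\omega a^{(p-2)/2}bW=0$, a genuinely possible scenario) is left as a sketch with an acknowledged gap: you say you ``expect'' the linear functional $M$ not to vanish on the orthogonal complement of $W^{p-1}$, but you do not prove it, and the proposed two-parameter test family $e^{-\epsilon x_1}W+\delta\,x_1e^{-\epsilon x_1}V$ is not the competitor used in the paper. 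The paper handles (i) in a uniform way that needs no case split on strictness: it first constructs, on a thin slab $\Omega_\ell^-$ with $\ell$ small, a competitor $v_\ell^\epsilon=W-x_1\rho_\ell G_\epsilon$ whose Rayleigh quotient, as $\ell,\epsilon\to 0$, tends to $\int_\omega\bigl(A_{22}\nabla W\cdot\nabla W-(A_{12}\cdot\nabla W)^2/a_{11}\bigr)^{p/2}<\mu^1(\omega)$ (this strict drop uses only $A_{12}\cdot\nabla_{X_2}W\not\equiv 0$). It then glues this fixed slab to the decaying tail $W(X_2)e^{-\alpha(x_1-\ell_1)}$ to obtain $z_\alpha\in V(\Omega_\infty^+)$, expands the tail's contribution with the inequality $(a+b)^q\le a^q+qa^{q-1}b+Ca^{q-2}b^2$, and uses \eqref{gap holds for mu} (only in the non-strict form, to ensure $I_2\ge 0$) to show that for $\alpha$ small the numerator $\nu_\infty^+-\mu^1(\omega)$ is strictly negative. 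That is the idea your sketch is missing: the decisive drop comes from a transverse corrector near $x_1=0$, not from the exponential rate $\epsilon$.

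For part (ii) the gap is more serious. Your program factors the desired inequality through the intermediate claim
\begin{equation*}
\int_{\Omega_\infty^+}|A\nabla u\cdot\nabla u|^{p/2}\;\ge\;\int_{\Omega_\infty^+}|A_{22}\nabla_{X_2}u\cdot\nabla_{X_2}u|^{p/2},
\end{equation*}
reduced via convexity to $\int_{\Omega_\infty^+}y^{(p-2)/2}\bigl(a_{11}(\partial_{x_1}u)^2+2\,\partial_{x_1}u\,(A_{12}\cdot\nabla_{X_2}u)\bigr)\ge 0$. After the $u=c(x_1)W+r$ split, only one of the four resulting terms has the favourable sign; you then assert that the cross terms (those mixing $c'W$ with $A_{12}\cdot\nabla_{X_2}r$, $\partial_{x_1}r$ with $cb$, and $\partial_{x_1}r$ with $A_{12}\cdot\nabla_{X_2}r$) can be absorbed ``by Young's inequality and the cross-sectional Poincar\'e inequality,'' but you do not exhibit the constants, and for $p>2$ there is the further difficulty that the weight $y^{(p-2)/2}$ depends on the full field and does not split along the decomposition. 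You yourself call this ``the technical heart'' and do not carry it out, so the proof is not complete as written. The paper bypasses all of this with Picone's identity: for $u\ge 0$ and $v=W>0$,
\begin{equation*}
|A\nabla u\cdot\nabla u|^{p/2}-|A\nabla W\cdot\nabla W|^{(p-2)/2}A\nabla W\cdot\nabla\!\left(\frac{u^p}{W^{p-1}}\right)\ge 0,
\end{equation*}
and since $W$ is $x_1$-independent, integrating over $\Omega_\infty^+$ and integrating by parts collapses the entire excess into a single boundary term on $\{0\}\times\omega$, namely $\int_\omega|A_{22}\nabla W\cdot\nabla W|^{(p-2)/2}(A_{12}\cdot\nabla W)\,u^p(0,\cdot)/W^{p-1}$, which is $\le 0$ precisely when $A_{12}\cdot\nabla_{X_2}W\le 0$. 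The sign hypothesis \eqref{no gap for mu} is used exactly once and pointwise; nothing has to be absorbed. Non-attainment then drops out because equality in Picone's identity forces $u=cW$, which is not in $V(\Omega_\infty^+)$, whereas your non-attainment argument inherits the incompleteness of the absorption step it ``retraces.'' To close part (ii) you should switch to the Picone route (or at minimum prove the absorption estimate in full).
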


\smallskip

We conclude the paper with some results about the convergence of the other eigenvalues of problem \eqref{mix}, these results may be compared with the ones in \cite{ChPrIt}, \cite{ChRo}. Unlike the linear case $p=2,$
 there is no specific description of the second and higher eigenvalues available for $p>2$ for the eigenvalue problem \eqref{mix}.
Although there is a description available of what we call Krasnoselskii eigenvalues, through the concept of Krasnoselskii's genus. Let $\{\beta_k(\Omega_\ell)\}$ denote these Krasnoselskii eigenvalues of \eqref{mix}
which we instead use to analyze the asymptotic behaviour. We discuss these eigenvalues briefly in the next section. For $p=2$ therefore, we have two descriptions of eigenvalues of the problem \eqref{mix}, one through classical methods using Hilbert space structure and the other one using Krasnoselskii's genus, and these two descriptions coincide in this case (see Lemma \ref{lemma 6.1}). For $p>2$ it is not clear if Krasnoselskii eigenvalues exhaust the set of all eigenvalues of \eqref{mix}. However the first Krasnoselskii eigenvalue $\beta_1(\Omega_\ell)$ coincides with the usual first eigenvalue $\lambda_M^1(\Omega_\ell)$ (see Corollary \ref{coro 2}).\smallskip

In the literature there are other descriptions of eigenvalues of \eqref{mix} available, interested readers may look at \cite{drabek} for example.


\begin{definition}\label{definition S}
We shall say that $A$ satisfies the symmetry (S) if the set $\omega$ is symmetric w.r.t the origin, i.e. $\omega=-\omega$ and $A(-X_2)=A(X_2)$.
\end{definition}
The following theorem is an extension of Theorem 7.1 in \cite{ChPrIt} for higher eigenvalues. Only the asymptotic behavior of second eigenvalue was considered there. 
\begin{theorem}\label{theorem kth ev}
Let $A$ satisfies the symmetry (S). Then for $p=2$ and for any $k\geq2$
    \begin{equation}\label{kth ev}
\lim_{\ell\rightarrow\infty}\lambda_M^k(\Omega_\ell)=\lim_{\ell\rightarrow\infty}\lambda_M^1(\Omega_\ell).
    \end{equation}
\end{theorem}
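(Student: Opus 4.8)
The plan is to show the two matching inequalities $\limsup_{\ell\to\infty}\lambda_M^k(\Omega_\ell)\le\lim_{\ell\to\infty}\lambda_M^1(\Omega_\ell)=:\Lambda$ and $\liminf_{\ell\to\infty}\lambda_M^k(\Omega_\ell)\ge\Lambda$. The lower bound is immediate from monotonicity of eigenvalues: $\lambda_M^k(\Omega_\ell)\ge\lambda_M^1(\Omega_\ell)$ for every $\ell$, so $\liminf_{\ell\to\infty}\lambda_M^k(\Omega_\ell)\ge\lim_{\ell\to\infty}\lambda_M^1(\Omega_\ell)=\Lambda$ by Theorem \ref{theorem limit of first eigenvalue}. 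The whole content is therefore the upper bound, and this is where the symmetry hypothesis (S) enters.

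For the upper bound I would use the min-max (Courant--Fischer) characterization of $\lambda_M^k(\Omega_\ell)$ in the Hilbert space setting $p=2$: it suffices to exhibit, for each $\varepsilon>0$ and all large $\ell$, a $k$-dimensional subspace of the form domain $V(\Omega_\ell)=\{u\in H^1(\Omega_\ell):u=0\text{ on }\gamma_\ell\}$ on which the Rayleigh quotient $R_\ell(u)=\big(\int_{\Omega_\ell}A\nabla u\cdot\nabla u\big)\big/\big(\int_{\Omega_\ell}u^2\big)$ is at most $\Lambda+\varepsilon$. The idea is to build these test functions by transplanting near-minimizers of the semi-infinite problems defining $\nu_\infty^{\pm}$ to disjoint sub-cylinders of $\Omega_\ell$. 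Concretely, let $\phi$ be an almost-minimizer for $\nu_\infty^+$ (say $\nu_\infty^+=\Lambda$ after relabeling, using Theorem \ref{theorem limit of first eigenvalue}) which, by a cutoff/truncation argument, we may take to be supported in a bounded half-cylinder $(0,T)\times\omega$ with $R(\phi)\le\Lambda+\varepsilon$. By the symmetry (S), the reflection $x_1\mapsto -x_1$ maps $\Omega_\ell$ to itself and preserves the quadratic form (since $A(-X_2)=A(X_2)$ and $\omega=-\omega$), so the reflected function $\check\phi(x_1,X_2):=\phi(-x_1,X_2)$ is an equally good test function on the opposite end. Now place $k$ translated copies: partition, say, the left half $(-\ell,0)\times\omega$ into $k$ consecutive blocks of length $\ell/k$, and in the $j$-th block put a translate $\phi_j$ of $\phi$ (rescaled so its support fits, i.e. with $T$ replaced by something $\le \ell/k$; since $\nu_\infty^+$ is defined by an infimum over the \emph{whole} half-cylinder, and by Theorem \ref{theorem gap holds for mu}/the analysis behind Theorem \ref{theorem limit of first eigenvalue} this infimum is approached by compactly supported functions, such $\phi_j$ with $R_\ell(\phi_j)\le\Lambda+\varepsilon$ exist once $\ell/k$ is large enough). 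These $\phi_j$ have pairwise disjoint supports, hence are linearly independent and mutually orthogonal in both $L^2$ and the energy form, so they span a $k$-dimensional subspace on which the Rayleigh quotient is bounded by $\max_j R_\ell(\phi_j)\le\Lambda+\varepsilon$. Feeding this into Courant--Fischer gives $\lambda_M^k(\Omega_\ell)\le\Lambda+\varepsilon$ for all large $\ell$, and letting $\varepsilon\to 0$ finishes the proof.

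The main obstacle is the construction of the individual compactly supported near-minimizers $\phi_j$ with controlled Rayleigh quotient on a block of length $\ell/k$: one must know that $\nu_\infty^{\pm}$ can be approximated by functions supported in a \emph{finite} half-cylinder, and then that truncating/rescaling such a function to fit inside a block of length $\sim\ell/k$ costs only $o(1)$ in the Rayleigh quotient as $\ell\to\infty$. This is a standard cutoff estimate — multiply an almost-minimizer $\phi$ (which we may assume decays, by the decay estimates analogous to Theorem \ref{theorem decay} on the semi-infinite cylinder) by a smooth function of $x_1$ that is $1$ on $(0,T)$ and $0$ outside $(0,2T)$, and control the error terms by the smallness of $\|\nabla\phi\|$ and $\|\phi\|$ on the tail $(T,\infty)\times\omega$ together with $\|\phi'\|_\infty$-type bounds on the cutoff; the numerator picks up cross terms and a $\int|\phi|^2|\chi'|^2$ term, all of which are $O(1/T)$ relative to the denominator. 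One subtlety worth flagging: we need the supports of the $k$ translated blocks to remain inside $\gamma_\ell$-zero region and not touch the Neumann ends $\Gamma_\ell$; placing all $k$ of them strictly inside $(-\ell,0)\times\omega$ with a small buffer handles this automatically, and the Dirichlet condition on $\gamma_\ell$ is inherited from $\phi$ vanishing on $(0,T)\times\partial\omega$. Because $p=2$ throughout this theorem, no genus machinery is needed and the argument is purely linear-algebraic plus the cutoff lemma; the role of (S) is only to guarantee we may use near-minimizers of $\nu_\infty^+$ (equivalently $\nu_\infty^-$) freely on either end, which slightly simplifies bookkeeping but, since we only place copies in one half anyway, is in fact not strictly necessary for the upper bound — it is, however, consistent with the hypotheses of the cited Theorem 7.1 of \cite{ChPrIt} that this result extends.
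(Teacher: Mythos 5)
Your proposed construction for the upper bound has a fatal flaw: the strategy of placing $k$ translates of a near-minimizer of $\nu_\infty^+$ in $k$ pairwise \emph{disjoint} sub-blocks of the interior of $\Omega_\ell$ cannot produce a $k$-dimensional subspace with Rayleigh quotient near $\Lambda$. A near-minimizer $\phi$ of $\nu_\infty^+$ necessarily concentrates at the Neumann end $\{0\}\times\omega$ and does \emph{not} vanish there; you may truncate its tail at $x_1=T$ at small cost (this is what the density of $V_s(\Omega_\infty^+)$ in $V(\Omega_\infty^+)$ is saying), but you cannot also force $\phi(0,X_2)=0$ without losing the low Rayleigh quotient. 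Indeed, a function supported in a finite block $(a,b)\times\omega$ that vanishes on the entire boundary of the block is an admissible test function for the \emph{Dirichlet} problem on $\Omega_{(b-a)/2}$, so its Rayleigh quotient is at least $\lambda_D^1\geq\mu^1(\omega)$ by \eqref{result firoj 1}, which under the standing hypothesis $A_{12}\cdot\nabla_{X_2}W\not\equiv0$ is strictly larger than $\Lambda=\min\{\nu_\infty^+,\nu_\infty^-\}$. To place your translates $\phi_j$ disjointly with $j=1,\ldots,k$ and extend each by zero to an $H^1$-function on $\Omega_\ell$, every $\phi_j$ except at most the one touching $x_1=-\ell$ must vanish at both ends of its block, so your family cannot have Rayleigh quotient below $\mu^1(\omega)$ for those blocks. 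In other words, the cylinder $\Omega_\ell$ has only two Neumann ends, and a function can inherit the low $\nu_\infty^\pm$-energy only by being anchored at one of them; at most two disjointly-supported test functions can do this, not $k$.

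The paper's proof circumvents this by giving up disjoint supports altogether. It takes, on each half-cylinder, $k$ functions $h_1^\pm,\ldots,h_k^\pm$ that are translates of the first eigenfunctions $\tilde u_{\frac{i}{k}\ell}^\pm$ of the half-cylinder mixed problems \eqref{4.4}, all sharing the same Neumann end at $x_1=\mp\ell$ and having \emph{nested} supports of increasing length $\frac{i}{k}\ell$. Because these functions are actual eigenfunctions of the mixed problems on nested subdomains, the cross terms $\int_{\Omega_\ell}A\nabla u_i\cdot\nabla u_j$ can be evaluated through the weak equation \eqref{6.10}, and the estimate closes without needing $L^2$-orthogonality. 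The symmetry hypothesis (S) is genuinely used: it forces $\tilde\lambda_M^1(\Omega_\ell^+)=\tilde\lambda_M^1(\Omega_\ell^-)$ and hence $\nu_\infty^+=\nu_\infty^-$, so that \emph{both} families of $k$ nested test functions have Rayleigh quotient tending to $\Lambda$. Your closing remark that (S) is ``not strictly necessary'' because all copies could be placed in one half is another symptom of the same misconception: placing $k$ disjoint copies in one half is not possible, and nesting $k$ copies at one end while ignoring the other end would give a subspace of dimension $k$ but not yield the inequality, because the other half must contribute $k$ more dimensions with the \emph{same} limiting Rayleigh quotient — which is exactly what (S) guarantees. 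The remaining steps of your proposal (min-max characterization, $\varepsilon$-bookkeeping, the lower bound from $\lambda_M^k\geq\lambda_M^1$) are fine; it is the core geometric construction that must be replaced.
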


\begin{theorem}\label{theorem second K ev}
 Let $A$ satisfies the symmetry (S). Then for  $p>2,$
\begin{equation}\label{second K ev}
\lim_{\ell\rightarrow\infty}\beta_2(\Omega_\ell)=\lim_{\ell\rightarrow\infty}\lambda_M^1(\Omega_\ell).
\end{equation}
   \end{theorem}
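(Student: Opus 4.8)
The plan is to prove the two inequalities $\liminf_{\ell\to\infty}\beta_2(\Omega_\ell)\ge \lim_{\ell\to\infty}\lambda_M^1(\Omega_\ell)$ and $\limsup_{\ell\to\infty}\beta_2(\Omega_\ell)\le \lim_{\ell\to\infty}\lambda_M^1(\Omega_\ell)$ separately; the first one is essentially free. Since a set of Krasnoselskii genus at least $2$ has genus at least $1$, the family of admissible sets in the min--max defining $\beta_2(\Omega_\ell)$ is contained in the one defining $\beta_1(\Omega_\ell)$, whence $\beta_2(\Omega_\ell)\ge\beta_1(\Omega_\ell)=\lambda_M^1(\Omega_\ell)$ by Corollary~\ref{coro 2}; passing to the limit (which exists by Theorem~\ref{theorem limit of first eigenvalue}) yields the lower bound. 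The whole difficulty therefore lies in the upper bound on $\limsup_{\ell\to\infty}\beta_2(\Omega_\ell)$, and this is where the symmetry~(S) and Theorem~\ref{theorem limit of first eigenvalue} enter.

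First I would record that (S) forces $\nu_\infty^+=\nu_\infty^-$. Indeed, since $\omega=-\omega$ (hence $\partial\omega=-\partial\omega$), the map $u\mapsto\widetilde u$ with $\widetilde u(x_1,X_2):=u(-x_1,-X_2)$ is a bijection from $V(\Omega_\infty^+)$ onto $V(\Omega_\infty^-)$; every component of $\nabla\widetilde u(x_1,X_2)$ is the opposite of the corresponding component of $(\nabla u)(-x_1,-X_2)$, so $A(X_2)\nabla\widetilde u\cdot\nabla\widetilde u$ at $(x_1,X_2)$ equals $A(X_2)(\nabla u)(-x_1,-X_2)\cdot(\nabla u)(-x_1,-X_2)$, which by $A(X_2)=A(-X_2)$ is exactly $(A\nabla u\cdot\nabla u)(-x_1,-X_2)$; a change of variables then shows $\widetilde u$ and $u$ have the same Rayleigh quotient, so $\nu_\infty^+=\nu_\infty^-$. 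Combining with Theorem~\ref{theorem limit of first eigenvalue} gives $\Lambda:=\lim_{\ell\to\infty}\lambda_M^1(\Omega_\ell)=\min\{\nu_\infty^+,\nu_\infty^-\}=\nu_\infty^+=\nu_\infty^-$.

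Next I would construct, for each $\varepsilon>0$ and each sufficiently large $\ell$, a subset of $V(\Omega_\ell)$ of genus $2$ on which the Rayleigh quotient $R_{\Omega_\ell}(v)=\big(\int_{\Omega_\ell}|A\nabla v\cdot\nabla v|^{p/2}\big)/\big(\int_{\Omega_\ell}|v|^p\big)$ stays below $\Lambda+\varepsilon$. Pick near-minimizers $\phi^\pm\in V(\Omega_\infty^\pm)$ for $\nu_\infty^\pm=\Lambda$ and truncate them in the $x_1$-direction: multiply by $\eta(\pm x_1/T)$, where $\eta\in C^\infty(\mathbb{R})$ is $1$ on $[0,1]$ and $0$ on $[2,\infty)$. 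A standard computation (the term carrying $\eta'$ has $L^p$-norm $O(1/T)$, the remaining terms converge in $W^{1,p}$) shows that for $T$ large the truncated functions $\psi^\pm$ lie in $V(\Omega_\infty^\pm)$, are supported within distance $2T$ of the Neumann cap $\{x_1=0\}$, extend by zero across the far face $\{|x_1|=2T\}$ to $W^{1,p}$ functions, and have Rayleigh quotient $<\Lambda+\varepsilon$. For $\ell>2T$ I then transplant $\psi^-$ onto the slab $(\ell-2T,\ell)\times\omega$ of $\Omega_\ell$ (its cap $\{x_1=0\}$ sent to the Neumann face $\{x_1=\ell\}$) and $\psi^+$ onto $(-\ell,-\ell+2T)\times\omega$ (its cap sent to $\{x_1=-\ell\}$), \emph{by pure translation} --- the orientations match, so no reflection is needed; calling the results $u^+_\ell,u^-_\ell$, the $x_1$-translation invariance of $A(X_2)$ makes $u^\pm_\ell\in V(\Omega_\ell)$ (they vanish on $\gamma_\ell$ and glue to the zero function across the interior faces of their slabs) with $R_{\Omega_\ell}(u^\pm_\ell)<\Lambda+\varepsilon$.

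Finally, since $u^+_\ell$ and $u^-_\ell$ have supports separated by distance $2(\ell-2T)>0$, both the $p$-energy and the $L^p$-mass are additive over them: for $v=a\,u^+_\ell+b\,u^-_\ell$ one has $\int_{\Omega_\ell}|A\nabla v\cdot\nabla v|^{p/2}=|a|^p E^++|b|^p E^-$ and $\int_{\Omega_\ell}|v|^p=|a|^p M^++|b|^p M^-$ with $E^\pm/M^\pm<\Lambda+\varepsilon$, so every $0\neq v\in E_\ell:=\mathrm{span}\{u^+_\ell,u^-_\ell\}$ satisfies $R_{\Omega_\ell}(v)<\Lambda+\varepsilon$. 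The $L^p$-unit sphere $S_\ell$ of the two-dimensional space $E_\ell$ is compact, symmetric, avoids the origin and is homeomorphic to the circle, hence has Krasnoselskii genus $2$; it is therefore admissible in the min--max for $\beta_2(\Omega_\ell)$, giving $\beta_2(\Omega_\ell)\le\sup_{S_\ell}R_{\Omega_\ell}\le\Lambda+\varepsilon$ for all $\ell>2T$. Letting $\ell\to\infty$ and then $\varepsilon\to0$ gives $\limsup_{\ell\to\infty}\beta_2(\Omega_\ell)\le\Lambda=\lim_{\ell\to\infty}\lambda_M^1(\Omega_\ell)$, which with the lower bound completes the proof. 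I expect the main obstacle to be the transplantation step --- producing genuinely compactly supported near-minimizers of $\nu_\infty^\pm$ that remain in $V(\Omega_\infty^\pm)$ and can be moved into $\Omega_\ell$ by a translation alone, so that the non-translation-invariant cross term $A_{12}\cdot\nabla_{X_2}$ does not alter the energy --- together with the (elementary but, for $p>2$, essential) additivity of both the $p$-energy and the $L^p$-mass over the two disjoint slabs; the genus count and the monotonicity $\beta_2\ge\beta_1$ are routine.
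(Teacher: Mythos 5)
Your proposal is correct and has the same overall structure as the paper's proof: use symmetry~(S) to get $\nu_\infty^+=\nu_\infty^-=\Lambda$, build two competitor functions supported near opposite ends of $\Omega_\ell$, exploit the disjointness of their supports so that \emph{both} the $p$-energy and the $L^p$-mass are additive on their two-dimensional span, take the $L^p$-unit sphere of that span as a genus-$2$ admissible set in the min--max for $\beta_2$, and use $\beta_2\geq\beta_1=\lambda_M^1$ (Corollary~\ref{coro 2}) for the matching lower bound. Where you and the paper diverge is only in the construction of the two end-concentrated test functions. The paper takes $h_1^\mp$ to be the \emph{exact} normalized minimizers $\tilde u_\ell^\pm$ of the auxiliary half-cylinder problems $\tilde\lambda_M^1(\Omega_\ell^\pm)$ of \eqref{4.4}, translated to sit on $\Omega_\ell^\mp$; since $\tilde u_\ell^\pm$ already vanish on $\Gamma_\ell^\pm$ (the far cap) and on the lateral boundary, they extend by zero without any cutoff, and $\tilde\lambda_M^1(\Omega_\ell^\pm)\to\nu_\infty^\pm$ is precisely Lemma~\ref{lemma 4.2}. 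You instead truncate near-minimizers of the semi-infinite problems $\nu_\infty^\pm$ by an $x_1$-cutoff before translating; this works, but it in effect re-derives the approximation half of Lemma~\ref{lemma 4.2} rather than invoking it, and requires the extra (though standard) argument that truncation changes the Rayleigh quotient by $O(1/T)$. Both constructions isolate the same feature that replaces the Hilbert-space orthogonality argument used for $p=2$: with disjoint supports, the Rayleigh quotient of any linear combination is a mediant of the two individual quotients, hence bounded by their maximum, which tends to $\Lambda$.
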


For other related work on asymptotic behavior of problems defined on cylindrical domains interested readers may look at \cite{ChRo1,Chipot2,ChSo,ChYie,ChZu} for study of elliptic problems, \cite{ChRo3,ChRo2,ChXie} for parabolic problems. The behavior of Stokes equations has been studied in \cite{ChSo3,ChSo2},  whereas purely variational problems have been  studied in \cite{Chipot3,ChMo,Dret}. Recently, problems involving nonlocal operators have also been considered, e.g., see \cite{ChAl,Gy,InPr,IcPr2,YeKa}. For the first time in such literature semilinear problems are studied in  \cite{davilla}.

This paper is organized as follow:  In Section 2 we  briefly review the notion of Krasnoselskii's genus and related facts, underlying function spaces, uniform Poincar\'e inequality, properties of the matrix $A$ and other relevant known results. Section 3 contains the proof of Theorem \ref{theorem decay}. In Section 4 we prove Theorem \ref{theorem limit of first eigenvalue}, which identifies the limit of $\lambda_M^1(\Omega_\ell)$ as $\ell$ goes to infinity. In Section 5 gap phenomenon on semi infinite cylinder (Theorem \ref{theorem gap holds for mu}) is studied.  In Section 6 we conclude the paper with the convergence results (Theorem \ref{theorem kth ev} and \ref{theorem second K ev}) concerning the $k$-th eigenvalue for $p=2$ and the second Krasnoselskii's eigenvalue for $p>2$ of the problem \eqref{mix}.\smallskip

\section{Preliminaries}


Let $V$ be a Banach space. Define the collection
$$\mathcal{A}=\left\{A\subset V: A\ \mathrm{is} \ \mathrm{closed} \ \mathrm{and} \ A=-A\right\}$$
of all closed and symmetric subsets of $V$. Let $\gamma:\mathcal{A}\rightarrow \mathbb{N}\cup\{0,\infty\}$ be defined by
$$\gamma(A)=\begin{cases}\mathrm{inf}\left\{m:\exists \, h:A\rightarrow \mathbb{R}^m\setminus \{0\},h\ \mathrm{is} \ \mathrm{continuous} \ \mathrm{and} \ h(-u)=-h(u) \right\}\\
\infty,\ \mathrm{if} \ \{\cdots \}=\phi,\ \mathrm{in} \ \mathrm{particular \, if} \ A\ni 0,
\end{cases}$$
for $A\neq \phi$ and $\gamma(\phi)=0$. $\gamma(A)$ is called the Krasnoselskii's genus of the set $A$. We summarise some important properties of the Krasnoselskii's genus in the following :
\begin{proposition}\label{prop genus}
\emph{(i)} $\gamma$ is a monotone sub-additive continuous map and $\gamma(A)$ is finite for any compact symmetric subset of $V\setminus \{0\}$.\\
\emph{(ii)} Let $\Omega$ be any bounded symmetric neighborhood of the origin in $\mathbb{R}^m$. Then $\gamma(\partial\Omega)=m$.\\
 \emph{(iii)} Let $A$ be a compact symmetric subset of a Hilbert space $V$ with the inner product $(\cdot,\cdot)_V$ and let $\gamma(A)=m<\infty$. Then $A$ contains at least $m$ mutually orthogonal vectors, i.e. $\exists u_1, u_2,\cdots,u_m\in A$ such that $(u_j,u_k)_V=0$ for $j\neq k$. 
 \end{proposition}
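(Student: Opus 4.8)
I would establish the three parts of Proposition~\ref{prop genus} separately; the recurring technical tool is the \emph{odd Tietze extension}: any continuous $h\colon A\to\mathbb{R}^m$ on a closed symmetric $A\subset V$ extends continuously to $V$, and after replacing $\widetilde h(x)$ by $\tfrac12\bigl(\widetilde h(x)-\widetilde h(-x)\bigr)$ the extension may be taken odd without changing $h$ on $A$ when $h$ is odd. \emph{Part (i).} Monotonicity is immediate by restriction of the defining map. For sub-additivity, assuming $\gamma(A)=p$ and $\gamma(B)=q$ are finite, I pick odd continuous $h_1\colon A\to\mathbb{R}^p\setminus\{0\}$ and $h_2\colon B\to\mathbb{R}^q\setminus\{0\}$, extend both to odd continuous maps on $V$, and note that $x\mapsto(h_1(x),h_2(x))$ is odd, continuous, and nonvanishing on $A\cup B$ (its first block is nonzero on $A$, its second on $B$), so $\gamma(A\cup B)\le p+q$. ``Continuity'' I read as: a compact $A\in\mathcal{A}$ admits a closed symmetric neighbourhood $N$ with $\gamma(N)=\gamma(A)$; extending an odd map realizing $\gamma(A)=m$ to an odd $\widetilde h$ on $V$, the open set $\{\widetilde h\ne0\}$ contains the compact $A$, hence a closed $\delta$-neighbourhood $N$ of $A$, and then $m=\gamma(A)\le\gamma(N)\le m$. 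For finiteness, if $A\in\mathcal{A}$ is compact with $0\notin A$, then for $x\in A$ the ball $U_x=B(x,\|x\|/2)$ satisfies $U_x\cap(-U_x)=\emptyset$ (a common point $y$ would give $2\|x\|\le\|y-x\|+\|y+x\|<\|x\|$); covering $A$ by $U_{x_1},\dots,U_{x_N}$ and setting $W_i=(U_{x_i}\cup(-U_{x_i}))\cap A$, the map equal to $+1$ on $U_{x_i}\cap A$ and $-1$ on $(-U_{x_i})\cap A$ is odd and continuous, so $\gamma(W_i)\le1$, and $A=\bigcup_{i=1}^N W_i$ together with sub-additivity yields $\gamma(A)\le N$.

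\emph{Part (ii).} Since $\Omega$ is a neighbourhood of the origin, $0\in\operatorname{int}\Omega$, hence $0\notin\partial\Omega$; as $\partial\Omega$ is closed and symmetric, the inclusion $\partial\Omega\hookrightarrow\mathbb{R}^m\setminus\{0\}$ witnesses $\gamma(\partial\Omega)\le m$. For the reverse inequality I would argue by contradiction using Borsuk's theorem. Suppose $\gamma(\partial\Omega)=k\le m-1$; take an odd continuous $h\colon\partial\Omega\to\mathbb{R}^k\setminus\{0\}$, extend it to a continuous $\widetilde h\colon\overline\Omega\to\mathbb{R}^k$, and put $H=(\widetilde h,0)\colon\overline\Omega\to\mathbb{R}^m$, which is odd and nonvanishing on $\partial\Omega$. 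By Borsuk's theorem $\deg(H,\Omega,0)$ is odd, hence nonzero; since the degree is locally constant on $\mathbb{R}^m\setminus H(\partial\Omega)$ and $0$ lies in this open set, $\deg(H,\Omega,y)\ne0$ for all $y$ in a ball about $0$, so $H(\Omega)$ contains a neighbourhood of $0$ in $\mathbb{R}^m$. This is impossible because $H(\Omega)\subset\mathbb{R}^k\times\{0\}$ with $k<m$. Hence $\gamma(\partial\Omega)=m$.

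\emph{Part (iii).} Here $0\notin A$ because $\gamma(A)=m<\infty$. If $A$ contained no $m$ mutually orthogonal vectors, I choose an inclusion-maximal family $u_1,\dots,u_j\in A$ of mutually orthogonal (hence nonzero) vectors, so $j\le m-1$, and let $P$ be the orthogonal projection of $V$ onto $E=\operatorname{span}\{u_1,\dots,u_j\}$. For every $u\in A$ one has $Pu\ne0$: otherwise $u\perp E$ and $u\ne0$, so $u_1,\dots,u_j,u$ would be a strictly larger orthogonal family in $A$, contradicting maximality. Thus $u\mapsto Pu$ is a continuous odd (indeed linear) map $A\to E\setminus\{0\}\cong\mathbb{R}^j\setminus\{0\}$, forcing $\gamma(A)\le j\le m-1$, a contradiction; hence $A$ contains at least $m$ mutually orthogonal vectors.

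The only step that is not elementary is the lower bound $\gamma(\partial\Omega)\ge m$ in part (ii): it genuinely requires a topological input, namely Borsuk's theorem (equivalently the Borsuk--Ulam theorem), whereas every other step reduces to the odd Tietze extension together with routine covering and linear projection arguments.
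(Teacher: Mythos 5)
The paper itself offers no proof of this proposition: immediately after stating Proposition~\ref{prop genus} and Theorem~\ref{theorem genus} it refers the reader to Struwe (Chapter~II, \S5), so there is no ``paper's own argument'' to compare against beyond the standard textbook treatment. Your proof is exactly that standard treatment: odd Tietze extension for monotonicity, sub-additivity, continuity and finiteness in part~(i); Borsuk's degree theorem for the lower bound $\gamma(\partial\Omega)\geq m$ in part~(ii), with the trivial inclusion $\partial\Omega\hookrightarrow\mathbb{R}^m\setminus\{0\}$ giving the upper bound; and the orthogonal-projection-onto-a-maximal-orthogonal-family argument in part~(iii). All three parts are essentially correct and are what one finds in the cited reference.

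One small wrinkle in part~(i): the sets $W_i=(U_{x_i}\cup(-U_{x_i}))\cap A$ are relatively \emph{open} in $A$, not closed, so $\gamma(W_i)$ is not literally defined by your definition of $\gamma$, which lives on $\mathcal{A}$ (closed symmetric sets). This is easily repaired --- replace $U_{x_i}$ with the closed balls $\overline{B}(x_i,\|x_i\|/2)$ from the outset (your distance argument $2\|x\|\leq\|y-x\|+\|y+x\|$ still gives a strict contradiction with $\leq\|x\|$), or pass to $\overline{W_i}$, which remains contained in the disjoint union $\overline{U_{x_i}}\cup(-\overline{U_{x_i}})$ so the $\pm1$ map still extends continuously and oddly. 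With either fix the finiteness argument goes through, and the rest of the proof stands as written.
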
\smallskip

Suppose $E$ is an even functional on $V$ i.e., $E(-u)=E(u)$, $u\in V$. A sequence $(u_m)$ is called a Palais-Smale sequence for $E$, if $|E(u_m)|\leq c$ uniformly on $m$, while $\lVert DE(u_m)\rVert\rightarrow0$ as $m\rightarrow \infty$, where $DE(u)$ is the Fr\'echet derivative of the functional $E$ at $u$. $E$ is said to satisfy the Palais-Smale (P.S) condition if any Palais-Smale sequence has a strongly convergent subsequence. The following theorem assures the existence of the critical points and characterizes the critical values of $E$.
\begin{theorem}\label{theorem genus}
\emph{(i)} Suppose $E$ is an even functional on a complete symmetric $C^{1,1}$-manifold $M\subset V\setminus \{0\}$ of some Banach space $V$. Also suppose $E$ satisfies (P.S) condition and is bounded from below on $M$.
Let $\hat{\gamma}(M)=\mathrm{sup}\left\{\gamma(K):K\subset M\ \mathrm{compact} \ \mathrm{and}\ \mathrm{symmetric} \right\}.$ Then the functional $E$ possesses at least $\hat{\gamma}(M)\leq\infty$ pairs of critical points.\\
\emph{(ii)} Let $\mathcal{A}$ be defined as above. For any $k\leq \hat{\gamma}(M)$ consider the family 
$$\mathcal{F}_k=\left\{A\in\mathcal{A}:A\subset M, \gamma(A)\geq k \right\}.$$
 If $$\beta_k=\underset{A\in\mathcal{F}_k}{\mathrm{inf}}\,\underset{u\in A}{\mathrm{sup}} \,E(u)$$ is finite, then $\beta_k$ is a critical value of $E$.
\end{theorem}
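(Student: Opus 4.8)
The plan is to run the classical Lusternik--Schnirelmann minimax scheme; the only points needing care are that $V$ is a general Banach space, so gradient flows must be built from pseudo-gradient vector fields, and that all constructions must be made equivariant for the antipodal map $u\mapsto -u$. Write $E^c=\{u\in M:E(u)\le c\}$ for the sublevel sets and, for a critical value $c$, let $K_c$ be the set of critical points of $E$ on $M$ at level $c$; by the (P.S.) condition $K_c$ is compact, and since $E$ is even and $0\notin M$ it is closed and symmetric, so $\gamma(K_c)<\infty$ by Proposition~\ref{prop genus}(i). The first step is an \emph{equivariant deformation lemma}: if $c$ is not a critical value, then for all small $\varepsilon>0$ there is an odd continuous $h:M\to M$ with $h(E^{c+\varepsilon})\subset E^{c-\varepsilon}$; more generally, if $c$ is a critical value and $\mathcal N$ is a symmetric neighbourhood of $K_c$ in $M$, then for small $\varepsilon>0$ there is an odd continuous $h:M\to M$ with $h(E^{c+\varepsilon}\setminus\mathcal N)\subset E^{c-\varepsilon}$. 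One proves this in the usual way: on the $C^{1,1}$-manifold $M$ build a locally Lipschitz tangent pseudo-gradient field $X$ for $E$; since $E$ is even, $-X(-u)$ is again a pseudo-gradient at $u$, so replacing $X(u)$ by $\tfrac12(X(u)-X(-u))$ makes it odd; cut it off by a symmetric Lipschitz function vanishing near $K_c$ and outside $\{c-\varepsilon\le E\le c+\varepsilon\}$, and integrate the ODE on $M$. The (P.S.) condition gives a uniform lower bound for $\|DE\|$ on $\{c-\varepsilon\le E\le c+\varepsilon\}\setminus\mathcal N$, which forces the flow to push $E$ down by a definite amount in finite time, and the $C^{1,1}$ regularity keeps the flow on $M$ for long enough.

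Granting this, part (ii) is short. Since $\mathcal F_{k+1}\subset\mathcal F_k$ we have $\beta_1\le\beta_2\le\cdots$. Suppose $\beta_k=:c$ is finite but not a critical value. Choose $\varepsilon>0$ and an odd continuous $h$ with $h(E^{c+\varepsilon})\subset E^{c-\varepsilon}$, then pick $A\in\mathcal F_k$ with $\sup_A E<c+\varepsilon$, i.e. $A\subset E^{c+\varepsilon}$. Then $h(A)\subset E^{c-\varepsilon}$; because $h$ is odd and continuous the genus does not drop, so (passing to the closure, which stays symmetric and, by continuity of $E$, inside $E^{c-\varepsilon}$) we get $\overline{h(A)}\in\mathcal F_k$ with $\sup_{\overline{h(A)}}E\le c-\varepsilon$. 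Hence $\beta_k\le c-\varepsilon<c=\beta_k$, a contradiction, so $\beta_k$ is a critical value.

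Part (i) combines (ii) with a multiplicity estimate. Assume first $\hat\gamma(M)=N<\infty$. For each $k\le N$ there is a compact symmetric $K\subset M$ with $\gamma(K)\ge k$, so $\mathcal F_k\ne\emptyset$, and since $E$ is continuous and bounded below, $\inf_M E\le\beta_k\le\sup_K E<\infty$; thus $\beta_k$ is finite, hence by (ii) a critical value. If $\beta_1,\dots,\beta_N$ are pairwise distinct, we get $N$ distinct critical values, and as $M$ is symmetric and $E$ even with $0\notin M$ each critical point $u$ pairs with the distinct point $-u$, giving at least $N$ pairs of critical points. If instead $\beta_k=\beta_{k+1}=\dots=\beta_{k+j}=:c$ with $j\ge1$, then $\gamma(K_c)\ge j+1$: otherwise the continuity property of the genus gives a closed symmetric neighbourhood $\mathcal N$ of $K_c$ with $\gamma(\mathcal N)\le j$, and applying the deformation lemma relative to $\mathcal N$ to some $A\in\mathcal F_{k+j}$ with $A\subset E^{c+\varepsilon}$, subadditivity gives $\gamma(\overline{A\setminus\mathcal N})\ge\gamma(A)-\gamma(\mathcal N)\ge k$ while $h$ sends $\overline{A\setminus\mathcal N}$ into $E^{c-\varepsilon}$ without lowering the genus, so $\beta_k\le c-\varepsilon<c$, a contradiction. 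Since a compact symmetric subset of $V\setminus\{0\}$ of genus $\ge2$ is infinite, $K_c$ is infinite and $E$ has infinitely many---hence at least $N$---pairs of critical points. When $\hat\gamma(M)=\infty$ the same alternatives apply: all $\beta_k$ are finite critical values, and the sequence cannot be eventually constant (that would force $\gamma(K_c)=\infty$, impossible for compact $K_c$), so it takes infinitely many distinct values and $E$ has infinitely many pairs of critical points, as claimed.

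The step I expect to be the main obstacle is precisely the equivariant deformation lemma on the $C^{1,1}$ Banach manifold: constructing a locally Lipschitz, odd, tangent pseudo-gradient field, checking that it remains admissible after being cut off near $K_c$, and verifying that the induced flow stays on $M$ and exists on a long enough time interval---all of which genuinely use the $C^{1,1}$ structure of $M$ and the (P.S.) condition. Everything afterwards is bookkeeping with the monotonicity, subadditivity, and continuity of the genus recorded in Proposition~\ref{prop genus}.
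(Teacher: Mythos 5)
The paper does not give its own proof of this theorem; immediately after the statement it says ``We refer to \cite{Struwe} (Chapter II, Section 5) for more details about the Krasnoselskii's genus and the proofs of Proposition \ref{prop genus} and Theorem \ref{theorem genus}.'' So there is no in-paper argument to compare yours against. That said, your proposal is a correct and well-organized sketch of precisely the standard Lusternik--Schnirelmann minimax argument that the cited reference presents: an equivariant quantitative deformation lemma built from an odd, locally Lipschitz, tangent pseudo-gradient field (symmetrized via $u\mapsto\tfrac12(X(u)-X(-u))$) on the $C^{1,1}$-manifold; part (ii) by contradiction using the deformation directly on an almost-optimal $A\in\mathcal F_k$; and part (i) via finiteness of the $\beta_k$ plus the multiplicity estimate $\gamma(K_c)\ge j+1$ when $\beta_k=\cdots=\beta_{k+j}=c$, obtained from continuity and subadditivity of the genus together with the deformation away from a neighbourhood of $K_c$.

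Two small points of hygiene worth noting, although neither is a gap: when you cut $A$ by the closed symmetric neighbourhood $\mathcal N$ with $\gamma(\mathcal N)\le j$, the set $\overline{A\setminus\mathcal N}$ can meet $\partial\mathcal N$, so to apply the deformation you should run it relative to a slightly smaller neighbourhood $\mathcal N'\subset\operatorname{int}\mathcal N$ of $K_c$ (the standard two-neighbourhood device); and in part (ii), after pushing $h(A)$ into $E^{c-\varepsilon}$ you correctly pass to the closure, which is needed because $h(A)$ need not be closed when $A$ is merely closed and not compact. Both are exactly the technical refinements the student flags as ``the main obstacle,'' and both are handled in Struwe's text.
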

We refer to \cite{Struwe} (Chapter II, Section 5) for more details about the Krasnoselskii's genus and the proofs of Proposition \ref{prop genus} and Theorem \ref{theorem genus}. \smallskip

For an open set $\Omega$ in $\mathbb{R}^n$ and $1<p<\infty$, $W^{1,p}(\Omega)$ is the usual Sobolev space defined by
$$W^{1,p}(\Omega)=\{v\in L^p(\Omega):\partial_{x_i}v\in L^p(\Omega),i=1,2\cdots,n\},$$
with the norm
$$\lVert v \rVert_{W^{1,p}(\Omega)}=\Big\{\int_{\Omega}(|v|^p+|\nabla v|^p)\Big\}^{1/p}.$$
As before, let $\Omega_\ell=(-\ell,\ell)\times\omega$ on $\mathbb{R}^n$, where $\ell>0$ and $\omega$ is an open bounded set in $\mathbb{R}^{n-1}$ and we define the space 
 $$W_0^{1,p}(\Omega_\ell)=\{v\in W^{1,p}(\Omega_\ell):v=0 \ \textrm{on} \ \partial\Omega_\ell\}.$$
We denote the boundary of $\Omega_\ell$ by $\gamma_\ell\cup\Gamma_\ell$, where $\gamma_\ell=(-\ell,\ell)\times\partial\omega$ and $\Gamma_\ell=\{-\ell,\ell\}\times \omega$.
To analyze the eigenvalues and eigenfunctions of the problem \eqref{mix}, we define a suitable space
$$V(\Omega_\ell)=\{v\in W^{1,p}(\Omega_\ell):v=0 \ \textrm{on} \ \gamma_\ell\}.$$
The boundary conditions are considered in the sense of traces.
The following version of Poincar\'e inequality is used in this work.
\begin{lemma}
 \label{theorem poincare}\emph{\textbf{(Poincar\'e inequality)}}
Let $\Omega=U\times \omega$, where $U$ is any open subset of $\mathbb{R}$, $\omega$ is an open bounded set in $\mathbb{R}^{n-1}$. Let $u\in V(\Omega)$, where $V(\Omega)=\{u\in W^{1,p}(\Omega): u=0$ on $U\times \partial \omega\}$. Then we have
\begin{equation}\label{poincare}
    \lVert u \rVert_{L^p(\Omega)}\leq C_p\lVert \nabla u \rVert_{L^p(\Omega)}
\end{equation}
where $C_p>0$ is a constant depends only on $p$ and $\omega$.
\end{lemma}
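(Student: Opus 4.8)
The plan is to exploit the product structure $\Omega=U\times\omega$: slice $\Omega$ into the cross-sections $\{x_1\}\times\omega$, apply the classical Poincar\'e inequality on the \emph{bounded} set $\omega$ to each slice, and then integrate back in the $x_1$ variable. Boundedness of $\omega$ is what saves the argument, since $U$ (and hence $\Omega$) may be unbounded and no decay in the $x_1$-direction is available.

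First I would record the Poincar\'e inequality on $\omega$ itself. Since $\omega$ is bounded it lies in a slab $\{X_2\in\mathbb{R}^{n-1}:|x_2|<R\}$ for some $R=R(\omega)>0$; for $v\in W_0^{1,p}(\omega)$, writing the zero extension of $v$ as $v(X_2)=\int_{-R}^{x_2}\partial_{x_2}v\,dt$ and applying H\"older's inequality in the $x_2$-variable gives
\begin{equation*}
\lVert v\rVert_{L^p(\omega)}\leq 2R\,\lVert\nabla_{X_2}v\rVert_{L^p(\omega)}.
\end{equation*}
Next, for $u\in V(\Omega)$ I would show that for a.e. $x_1\in U$ the slice $u(x_1,\cdot)$ lies in $W_0^{1,p}(\omega)$: for $u$ smooth and vanishing near $U\times\partial\omega$ this is immediate from Fubini, and the general case follows by a standard density argument, localised in the $x_1$-direction to accommodate an unbounded $U$, the trace hypothesis $u=0$ on $U\times\partial\omega$ being exactly what forces the limiting slices to keep vanishing on $\partial\omega$. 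Combining the slicewise inequality with the trivial bound $|\nabla_{X_2}u|\leq|\nabla u|$, integrating over $x_1\in U$, and invoking Tonelli's theorem gives
\begin{equation*}
\int_{\Omega}|u|^p=\int_{U}\int_{\omega}|u(x_1,X_2)|^p\,dX_2\,dx_1\leq (2R)^p\int_{U}\int_{\omega}|\nabla u|^p\,dX_2\,dx_1=(2R)^p\int_{\Omega}|\nabla u|^p,
\end{equation*}
and taking $p$-th roots yields \eqref{poincare} with $C_p=2R$, a constant depending only on $\omega$.

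The only step requiring genuine care is the slicing step — that the lateral boundary condition descends to almost every cross-section — while the rest is bookkeeping. I expect this to be dispatched either via density of smooth functions vanishing near $U\times\partial\omega$ together with continuity of the trace operator, or, equivalently, by first extending $u$ by zero across $U\times\partial\omega$ to an element of $W^{1,p}(U\times\mathbb{R}^{n-1})$ and running the one-dimensional slab estimate directly on the extension.
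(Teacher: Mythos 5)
The paper states this lemma without proof --- it is treated as a known fact and simply cited later (in the norm equivalence for $V(\Omega_\ell)$ and throughout the estimates of Sections 3--5) --- so there is no in-paper argument to compare against. Your slicing proof is correct and is the standard way to obtain a Poincar\'e constant that is uniform in the unbounded factor $U$: because you only invoke Poincar\'e on the bounded cross-section $\omega$, the constant you produce, $2R$ with $R$ the half-width of a slab containing $\omega$, depends on $\omega$ alone (the statement permits but does not require $p$-dependence, so this is fine, and it is consistent with the later appearances of $C_p$, $C_p^{p/2}$, $C_p^p$, $C_p^2$ in Section 3). The one genuinely technical point --- that the lateral trace condition $u=0$ on $U\times\partial\omega$ descends to $u(x_1,\cdot)\in W_0^{1,p}(\omega)$ for a.e.\ $x_1$ --- is exactly the one you isolate, and of the two routes you sketch the zero-extension route is the cleaner: the vanishing lateral trace is precisely what makes the extension of $u$ by zero across $U\times\partial\omega$ lie in $W^{1,p}(U\times\mathbb{R}^{n-1})$ with no distributional jump; Fubini then places a.e.\ slice in $W^{1,p}(\mathbb{R}^{n-1})$ with support in $\overline{\omega}$, and your one-dimensional fundamental-theorem-of-calculus estimate applies verbatim, sidestepping any need to re-establish density of nice functions in $V(\Omega)$ when $U$ is unbounded.
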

Using Lemma \ref{theorem poincare} it can be easily verified that $\lVert v \rVert^p_{p,\Omega_\ell}=\int_{\Omega_\ell}|\nabla v|^p$ is a norm on $V(\Omega_\ell)$ and $V(\Omega_\ell)$ is a Banach space with respect to this norm.\smallskip

We consider the $n\times n$ symmetric matrix $A$ of the form
$$A(X_2)=\begin{pmatrix}
a_{11}(X_2) & A_{12}(X_2)\\
A_{12}^t(X_2) & A_{22}(X_2)
\end{pmatrix}$$
where $ a_{11}(X_2) \in \mathbb R,$ \, $A_{12}(X_2)$ is a 
$1 \times (n-1)$ matrix and $A_{22}(X_2)$ is a
$(n-1) \times (n-1)$ matrix. The components of $A(X_2)$ are assumed to be bounded
measurable functions on $\omega$ and we assume that   $A$ is an uniformly bounded and uniformly positive definite matrix i.e., there exists positve constants $C$ and $\lambda$ such that
\begin{equation}\label{uniform bound}
    \lVert A(X_2) \rVert\leq C \ \textrm{a.e} \ X_2\in \omega,
\end{equation}
and
\begin{equation}\label{uniform ellip}
    A(X_2)\xi\cdot\xi \geq\lambda |\xi|^2 \ \textrm{a.e} \ X_2\in \omega,\ \ \forall\xi\in\mathbb{R}^n.
\end{equation}
Here $\lVert \cdot \rVert$ denotes the norm of matrices, $|.|$ denotes the euclidean norm and $ X \cdot Y$ denotes the usual inner product 
 of two vectors $X$ and $Y$ in $\mathbb{R}^n$.\smallskip

To obtain the Krasnoselskii eigenvalues  of \eqref{mix}, we define a closed symmetric $C^{1,1}$-manifold $M_\ell$ of the Banach space $V(\Omega_\ell)$ as
$$M_\ell=\left\{u\in V(\Omega_\ell):\int_{\Omega_\ell}|u|^p=1 \right\},$$ 
and an even functional $E_\ell$ on $M_\ell$ by
$$E_\ell(u)=\int_{\Omega_\ell}|A\nabla u\cdot \nabla u|^{\frac{p}{2}}.$$
Then $E_\ell$ satisfies (P.S) condition, as shown in \cite{drabek}. For $k\in\mathbb{N}$, let $\{v_1,v_2,\cdots,v_k\}$ be a linearly independent set in $M_\ell$ and let $K=\left\{v=\sum_{i=1}^{k}\alpha_iv_i:\alpha_i\in\mathbb{R},1\leq i\leq k\ \mathrm{and}\ \int_{\Omega_\ell}|v|^p=1\right\}$. Then  $K$ is a compact and symmetric subset of a $k-$dimensional subspace of $L^p$ and is homeomorphic to $S^{k-1},$ the unit sphere in $\mathbb{R}^k.$ Therefore by Proposition \ref{prop genus} (ii), $\gamma(K)=k$ and hence $\hat{\gamma}(M_\ell)=\infty.$  Theorem \ref{theorem genus} (i) assures that there are infinitely many critical values in both the cases $p=2$ and $p>2$. For $p=2$, these critical values of \eqref{mix} coincide with the usual eigenvalues (as discussed earlier) and therefore we continue to  denote them by $\lambda_M^k(\Omega_\ell)$ and for $p>2$, we call these critical values as Krasnoselskii eigenvalues of the problem \eqref{mix} and denote them by $\beta_k(\Omega_\ell)$.\smallskip


 




Multiplying the equations \eqref{dir} and \eqref{cross} by functions from the corresponding underlying spaces and then integrating by parts, we formulate the corresponding weak version of the Dirichlet problems \eqref{dir} and \eqref{cross} respectively as follows.
\begin{equation}\label{dir-weak}
    \begin{cases}u\in W_0^{1,p}(\Omega_\ell),\\
    \int_{\Omega_\ell}|A\nabla u\cdot \nabla u|^{\frac{p-2}{2}}A\nabla u\cdot \nabla v=\lambda_{D}(\Omega_\ell)\int_{\Omega_\ell}|u|^{p-2}uv,\ \forall v\in W_0^{1,p}(\Omega_\ell).
    \end{cases}
\end{equation}
\begin{equation}\label{cross-weak}
    \begin{cases}u\in W_0^{1,p}(\omega),\\
    \int_{\omega}|A_{22}\nabla_{X_2} u\cdot \nabla_{X_2} u|^{\frac{p-2}{2}}A\nabla_{X_2} u\cdot \nabla_{X_2} v=\mu(\omega)\int_{\omega}|u|^{p-2}uv,\ \forall v\in W_0^{1,p}(\omega).
    \end{cases}
\end{equation}
The following characterization of the first eigenvalues $\lambda_{D}^1(\Omega_\ell)$ of \eqref{dir-weak} and  $\mu^1(\omega)$ of \eqref{cross-weak} is well known:
\begin{equation}\label{2.6}
\lambda_{D}^1(\Omega_\ell)=\inf_{u\in W_0^{1,p}(\Omega_\ell),u\neq 0}\frac{\int_{\Omega_\ell}|A\nabla u\cdot \nabla u|^{\frac{p}{2}}}{\int_{\Omega_\ell}|u|^p}.
\end{equation}
\begin{equation}\label{2.7}
    \mu^1(\omega)=\inf_{u\in W_0^{1,p}(\omega),u\neq 0}\frac{\int_{\omega}|A_{22}\nabla_{X_2} u\cdot \nabla_{X_2} u|^{\frac{p}{2}}}{\int_{\omega}|u|^{p}}=\frac{\int_{\omega}|A_{22}\nabla_{X_2} W\cdot \nabla_{X_2} W|^{\frac{p}{2}}}{\int_{\omega}|W|^{p}}.
\end{equation}
It is also a known fact (see \cite{AnLe,Peter}) that both $\lambda_{D}^1(\Omega_\ell)$ and $\mu^1(\omega)$ are simple and the corresponding eigenfunctions do not change sign. Here $W$ is the first normalised non-negative eigenfunction of \eqref{cross-weak}, considered earlier in Theorem \ref{theorem itai and firoj}.
\smallskip

The corresponding weak form of the mixed boundary value problem \eqref{mix} is the following: 
\begin{equation}\label{mix-weak}
    \begin{cases}u\in V(\Omega_\ell),\\
    \int_{\Omega_\ell}|A\nabla u\cdot \nabla u|^{\frac{p-2}{2}}A\nabla u\cdot \nabla v=\lambda_{M}(\Omega_\ell)\int_{\Omega_\ell}|u|^{p-2}uv,\ \forall v\in V(\Omega_\ell)
    \end{cases}
\end{equation}
and the first eigenvalue is given in terms of the Rayleigh quotient as
\begin{equation}\label{2.9}
\lambda_{M}^1(\Omega_\ell)=\inf_{u\in V(\Omega_\ell),u\neq 0}\frac{\int_{\Omega_\ell}|A\nabla u\cdot \nabla u|^{\frac{p}{2}}}{\int_{\Omega_\ell}|u|^p}
\end{equation}
In this case as well, the eigenvalue $\lambda_{M}^1(\Omega_\ell)$ is simple and the corresponding eigenfunction $u_\ell$ have constant sign.

\smallskip

\section{Main result on the asymptotic behavior of the first eigenfunction}
\begin{proof}[\textbf{Proof of Theorem} \ref{theorem decay}]
Let $0<\ell'\leq \ell-1$. Define the function $\rho_{\ell'}=\rho_{\ell'}(x_1)$ by
\begin{equation*}
\rho_{\ell'}(x_1)=
\begin{cases}
  1
     & \text{if $|x_1|\leq \ell',$}\\
  \ell'+1-|x_1|
     & \text{if $|x_1|\in (\ell',\ell'+1),$}\\
  0
     & \text{if $|x_1|\geq \ell'+1.$}
\end{cases}
\end{equation*}
Clearly, $\rho_{\ell'}u_\ell\in W_0^{1,p}(\Omega_\ell)$. From \eqref{2.6} and using the fact that $A$ is symmetric, we get
\begin{multline*}   \lambda_D^1(\Omega_\ell)\int_{\Omega_\ell}\rho_{\ell'}^p|u_\ell|^p\leq \int_{\Omega_\ell}|A\nabla (\rho_{\ell'}u_\ell)\cdot \nabla (\rho_{\ell'}u_\ell)|^{\frac{p}{2}}\\
=\int_{\Omega_\ell }|\rho_{\ell '}^2A\nabla u_\ell \cdot \nabla u_\ell +2\rho_{\ell '} u_\ell A\nabla u_\ell \cdot \nabla \rho_{\ell '}+u_\ell ^2A\nabla \rho_{\ell '}\cdot\nabla \rho_{\ell '}|^{\frac{p}{2}}\\
=\int_{\Omega_\ell }|\rho_{\ell '}^2A\nabla u_\ell \cdot \nabla u_\ell +A\nabla (\rho_{\ell '}u_\ell ^2)\cdot \nabla \rho_{\ell '}|^{\frac{p}{2}}.
 \end{multline*}
 On R.H.S we use the inequality $(a+b)^q\leq a^q+q2^{q-1}(b^q+a^{q-1}b)$ for $a,b\geq0$, $q\geq1$, which gives
\begin{multline}\label{3.1}
\lambda_D^1(\Omega_\ell)\int_{\Omega_\ell}\rho_{\ell'}^p|u_\ell|^p\leq
     \int_{\Omega_\ell }|A\nabla u_\ell \cdot \nabla u_\ell |^{\frac{p}{2}}\rho_{\ell '}^p\\
    +
    \frac{p}{2}C_0\int_{\Omega_\ell }\left(|A\nabla (\rho_{\ell '}u_\ell ^2)\cdot \nabla \rho_{\ell '}|^\frac{p}{2}+|A\nabla u_\ell \cdot \nabla u_\ell |^{\frac{p-2}{2}}|A\nabla (\rho_{\ell '}u_\ell ^2)\cdot \nabla \rho_{\ell '}|\right),
\end{multline}
where $C_0=2^{\frac{p-2}{2}}$. We estimate the integrals on R.H.S separately.
\begin{multline*}
\int_{\Omega_\ell }|A\nabla u_\ell \cdot \nabla u_\ell |^{\frac{p}{2}}\rho_{\ell '}^p\\
=\int_{\Omega_\ell }|A\nabla u_\ell \cdot \nabla u_\ell |^{\frac{p-2}{2}}\left(\rho_{\ell '}^pA\nabla u_\ell \cdot \nabla u_\ell +p\rho_{\ell '}^{p-1} u_\ell A\nabla u_\ell \cdot \nabla \rho_{\ell '}-p\rho_{\ell '}^{p-1} u_\ell A\nabla u_\ell \cdot \nabla \rho_{\ell '}\right)\\
=\int_{\Omega_\ell }|A\nabla u_\ell \cdot \nabla u_\ell |^{\frac{p-2}{2}}A\nabla u_\ell \cdot \nabla (\rho_{\ell '}^p u_\ell )-p\int_{\Omega_\ell }\rho_{\ell '}^{p-1} u_\ell |A\nabla u_\ell \cdot \nabla u_\ell |^{\frac{p-2}{2}}A\nabla u_\ell \cdot \nabla \rho_{\ell '}
\end{multline*}
Using the fact that $u_\ell$ is the first eigenfunction of \eqref{mix-weak} and taking modulus in the last integral, we get
\begin{equation}\label{3.2}
    \int_{\Omega_\ell }|A\nabla u_\ell \cdot \nabla u_\ell |^{\frac{p}{2}}\rho_{\ell '}^p\leq \lambda_M^1(\Omega_\ell )\int_{\Omega_\ell }\rho_{\ell '}^p|u_\ell |^p+p\int_{\Omega_\ell }\rho_{\ell '}^{p-1}|A\nabla u_\ell \cdot \nabla u_\ell |^{\frac{p-2}{2}}|A\nabla u_\ell \cdot \nabla \rho_{\ell '}||u_\ell|.
\end{equation}
Now we estimate the second integral on R.H.S of \eqref{3.1}.
\begin{multline}\label{3.3}
    \int_{\Omega_\ell }|A\nabla (\rho_{\ell '}u_\ell ^2)\cdot \nabla \rho_{\ell '}|^\frac{p}{2}
    =\int_{\Omega_\ell }|2\rho_{\ell '} u_\ell A\nabla u_\ell \cdot \nabla \rho_{\ell '}+u_\ell ^2A\nabla \rho_{\ell '}\cdot\nabla \rho_{\ell '}|^{\frac{p}{2}}\\
    \leq 2^{\frac{p}{2}}C_0\int_{\Omega_\ell }|A\nabla u_\ell \cdot \nabla \rho_{\ell '}|^{\frac{p}{2}}\rho_{\ell '}^{\frac{p}{2}} |u_\ell |^{\frac{p}{2}}+C_0\int_{\Omega_\ell }|A\nabla \rho_{\ell '}\cdot \nabla \rho_{\ell '}|^{\frac{p}{2}} |u_\ell |^{p}.
\end{multline}
Finally the last integral on R.H.S of \eqref{3.1} gives
\begin{multline}\label{3.4}
\int_{\Omega_\ell }|A\nabla u_\ell \cdot \nabla u_\ell |^{\frac{p-2}{2}}|A\nabla (\rho_{\ell '}u_\ell ^2)\cdot \nabla \rho_{\ell '}|\\
=\int_{\Omega_\ell }|A\nabla u_\ell \cdot \nabla u_\ell |^{\frac{p-2}{2}}|2\rho_{\ell '} u_\ell A\nabla u_\ell \cdot \nabla \rho_{\ell '}+u_\ell ^2A\nabla \rho_{\ell '}\cdot\nabla \rho_{\ell '}|
\\
\leq 2\int_{\Omega_\ell }\rho_{\ell '} |A\nabla u_\ell \cdot \nabla u_\ell |^{\frac{p-2}{2}}|A\nabla u_\ell \cdot \nabla \rho_{\ell '}||u_\ell |+\int_{\Omega_\ell }u_\ell^2|A\nabla u_\ell \cdot \nabla u_\ell |^{\frac{p-2}{2}}|A\nabla \rho_{\ell '}\cdot\nabla \rho_{\ell '}|.
\end{multline}
Combining \eqref{3.1}, \eqref{3.2}, \eqref{3.3} and \eqref{3.4} and using $0\leq\rho_{\ell '}\leq1$ we obtain
\begin{multline}\label{3.5}
\left(\lambda_D^1(\Omega_\ell )-\lambda_M^1(\Omega_\ell )\right)\int_{\Omega_\ell }\rho_{\ell '}^p|u_\ell |^p\leq p\int_{\Omega_\ell }|A\nabla u_\ell \cdot \nabla u_\ell |^{\frac{p-2}{2}}|A\nabla u_\ell \cdot \nabla \rho_{\ell '}||u_\ell |\,+\\
pC_0^3\int_{\Omega_\ell }|A\nabla u_\ell \cdot \nabla \rho_{\ell '}|^{\frac{p}{2}}|u_\ell |^{\frac{p}{2}}\,+\frac{p}{2}C_0^2\int_{\Omega_\ell }|A\nabla \rho_{\ell '}\cdot \nabla \rho_{\ell '}|^{\frac{p}{2}} |u_\ell |^{p}\,+\\
pC_0\int_{\Omega_\ell }|A\nabla u_\ell \cdot \nabla u_\ell |^{\frac{p-2}{2}}|A\nabla u_\ell \cdot \nabla \rho_{\ell '}| |u_\ell |\,+\frac{p}{2}C_0\int_{\Omega_\ell }|A\nabla u_\ell \cdot \nabla u_\ell |^{\frac{p-2}{2}}|A\nabla \rho_{\ell '}\cdot\nabla \rho_{\ell '}|u_\ell ^2
\end{multline}
$$=p(C_1I_1+C_2I_2+C_3I_3+C_4I_4),$$
where
$$I_1=\int_{\Omega_\ell }|A\nabla u_\ell \cdot \nabla u_\ell |^{\frac{p-2}{2}}|A\nabla u_\ell \cdot \nabla \rho_{\ell '}| |u_\ell |,$$
$$I_2=\int_{\Omega_\ell }|A\nabla u_\ell \cdot \nabla \rho_{\ell '}|^{\frac{p}{2}}|u_\ell |^{\frac{p}{2}},$$
$$I_3=\int_{\Omega_\ell }|A\nabla \rho_{\ell '}\cdot \nabla \rho_{\ell '}|^{\frac{p}{2}} |u_\ell |^{p},$$
$$I_4=\int_{\Omega_\ell }|A\nabla u_\ell \cdot \nabla u_\ell |^{\frac{p-2}{2}}|A\nabla \rho_{\ell '}\cdot\nabla \rho_{\ell '}|u_\ell ^2$$
and $C_1=(1+C_0)$, $C_2=C_0^3$, $C_3=\frac{1}{2}C_0^2$ and $C_4=\frac{1}{2}C_0$.\smallskip

Let $D_{\ell '}=\Omega_{\ell '+1}\setminus \Omega_{\ell '}$. Note that $|\nabla\rho_{\ell '}|=\chi_{D_{\ell '}}$. We now estimate the integrals $I_1$, $I_2$, $I_3$ and $I_4$ respectively.\\
\textbf{Estimate for $I_1$:} Using Cauchy-Schwarz inequality, we obtain
$$I_1\leq\int_{\Omega_\ell }|A\nabla u_\ell |^{\frac{p}{2}}|\nabla u_\ell |^{\frac{p-2}{2}} |\nabla \rho_{\ell '}||u_\ell| \leq \lVert A \rVert_{\infty}^{\frac{p}{2}}\int_{D_{\ell '}}|\nabla u_\ell |^{p-1} |u_\ell |.$$
By the H\"older's inequality, and then by the Poincar\'e inequality \eqref{poincare}, we obtain
\begin{equation}\label{3.6}
I_1\leq \lVert A \rVert_{\infty}^{\frac{p}{2}}\left(\int_{D_{\ell '}}|\nabla u_\ell |^p\right)^{\frac{p-1}{p}}\left(\int_{D_{\ell '}}|u_\ell |^p\right)^{\frac{1}{p}}\leq \lVert A \rVert_{\infty}^{\frac{p}{2}}C_p\int_{D_{\ell '}}|\nabla u_\ell |^p.
\end{equation}\\
\textbf{Estimate for $I_2$:} Cauchy-Schwarz inequality gives
$$I_2\leq \int_{\Omega_\ell }|A\nabla u_\ell |^{\frac{p}{2}} |\nabla \rho_{\ell '}|^{\frac{p}{2}}|u_\ell |^{\frac{p}{2}}\leq \lVert A \rVert_{\infty}^{\frac{p}{2}}\int_{D_{\ell '}}|\nabla u_\ell |^{\frac{p}{2}} |u_\ell |^{\frac{p}{2}}.$$
Using \eqref{uniform bound} and then the H\"older's inequality and the Poincar\'e inequality \eqref{poincare}, we obtain
\begin{equation}\label{3.7}
    I_2 \leq \lVert A \rVert_{\infty}^{\frac{p}{2}}\left(\int_{D_{\ell '}}|\nabla u_\ell |^p\right)^{\frac{1}{2}}\left(\int_{D_{\ell '}}|u_\ell |^p\right)^{\frac{1}{2}}\leq \lVert A \rVert_{\infty}^{\frac{p}{2}}C_p^{\frac{p}{2}}\int_{D_{\ell '}}|\nabla u_\ell |^p.
\end{equation}\\
\textbf{Estimate for $I_3$:} Using the Poincar\'e inequality \eqref{poincare}, we estimate the the integral
\begin{equation}\label{3.8}
    I_3\leq \lVert A \rVert_{\infty}^{\frac{p}{2}}\int_{D_{\ell '}}| u_\ell |^p\leq \lVert A \rVert_{\infty}^{\frac{p}{2}}C_p^p\int_{D_{\ell '}}|\nabla u_\ell |^p.
\end{equation}\\
\textbf{Estimate for $I_4$:} Similarly we estimate the integral $I_4$ as follows.
$$I_4\leq \lVert A \rVert_{\infty}^{\frac{p}{2}}\int_{D_{\ell '}}|\nabla u_\ell |^{p-2} |u_\ell |^2.$$
Again we use here H\"older's inequality, and then the Poincar\'e inequality \eqref{poincare} to obtain
\begin{equation}\label{3.9}
   I_4\leq \lVert A \rVert_{\infty}^{\frac{p}{2}}\left(\int_{D_{\ell '}}|\nabla u_\ell |^p\right)^{\frac{p-2}{p}}\left(\int_{D_{\ell '}}|u_\ell |^p\right)^{\frac{2}{p}}\leq \lVert A \rVert_{\infty}^{\frac{p}{2}}C_p^2\int_{D_{\ell '}}|\nabla u_\ell |^p.
\end{equation}
Substituting \eqref{3.6}, \eqref{3.7}, \eqref{3.8} and \eqref{3.9} in \eqref{3.5}, we get
\begin{equation}\label{3.10}
    \left(\lambda_D^1(\Omega_\ell )-\lambda_M^1(\Omega_\ell )\right)\int_{\Omega_\ell }\rho_{\ell '}^p|u_\ell |^p\leq p\lVert A \rVert_{\infty}^{\frac{p}{2}} C'\int_{D_{\ell '}}|\nabla u_\ell |^p
\end{equation}
where $C'=\left(C_1C_p+C_2C_p^{\frac{p}{2}}+C_3C_p^p+C_4C_p^2\right)$. Notice that,
\eqref{result firoj 2} together with \eqref{result firoj 1} gives 
a $\beta'>0$ such that for $\ell >\ell _0$ we have $\lambda_D^1(\Omega_\ell )-\lambda_M^1(\Omega_\ell )>\beta'$ and \eqref{3.10} gives
\begin{equation}\label{3.11}
 \beta'\int_{\Omega_\ell }\rho_{\ell '}^p|u_\ell |^p\leq p\lVert A \rVert_{\infty}^{\frac{p}{2}} C'\int_{D_{\ell '}}|\nabla u_\ell |^p.
\end{equation}\\
Since $\rho_{\ell '}^pu_\ell\in V(\Omega_\ell)$, from \eqref{mix-weak} we get
\begin{multline*}
    \lambda_M^1(\Omega_\ell )\int_{\Omega_\ell }\rho_{\ell '}^p|u_\ell |^p=\int_{\Omega_\ell }|A\nabla u_\ell \cdot \nabla u_\ell |^{\frac{p-2}{2}}A\nabla u_\ell \cdot \nabla (\rho_{\ell '}^pu_\ell )\\
    =\int_{\Omega_\ell }|A\nabla u_\ell \cdot \nabla u_\ell |^{\frac{p}{2}}\rho_{\ell '}^p+p\int_{\Omega_\ell }\rho_{\ell '}^{p-1}u_\ell |A\nabla u_\ell \cdot \nabla u_\ell |^{\frac{p-2}{2}}A\nabla u_\ell \cdot \nabla \rho_{\ell '}.
\end{multline*}
Using the inequality $a-|b|\leq a+b$ for $a,b\in\RR$, we obtain
$$\lambda_M^1(\Omega_\ell )\int_{\Omega_\ell }\rho_{\ell '}^p|u_\ell |^p\geq \int_{\Omega_\ell }|A\nabla u_\ell \cdot \nabla u_\ell |^{\frac{p}{2}}\rho_{\ell '}^p-p\int_{\Omega_\ell }|A\nabla u_\ell \cdot \nabla u_\ell |^{\frac{p-2}{2}}|A\nabla u_\ell \cdot \nabla \rho_{\ell '}|\rho_{\ell '}^{p-1}|u_\ell |.$$
Using \eqref{uniform ellip} and $\rho_{\ell'}=1$ in $\Omega_{\ell '}$ in the first integral, $0\leq\rho_{\ell '}\leq1$ and $|\nabla\rho_{\ell '}|=\chi_{D_{\ell '}}$ and then the H\"older's inequality in the second integral of the R.H.S, we get
\begin{multline*}
    \lambda_M^1(\Omega_\ell )\int_{\Omega_\ell }\rho_{\ell '}^p|u_\ell |^p\geq \lambda^{\frac{p}{2}}\int_{\Omega_{\ell '}}|\nabla u_\ell |^p-p\lVert A \rVert_{\infty}^{\frac{p}{2}}\int_{D_{\ell '}}|\nabla u_\ell |^{p-1} |u_\ell |\\
    \geq \lambda^{\frac{p}{2}}\int_{\Omega_{\ell '}}|\nabla u_\ell |^p-p\lVert A \rVert_{\infty}^{\frac{p}{2}}\left(\int_{D_{\ell '}}|\nabla u_\ell |^p\right)^{\frac{p-1}{p}}\left(\int_{D_{\ell '}}|u_\ell |^p\right)^{\frac{1}{p}}.
\end{multline*}
Finally the Poincar\'e inequality \eqref{poincare} on the last integral gives
\begin{equation}\label{3.12}
    \lambda_M^1(\Omega_\ell )\int_{\Omega_\ell }\rho_{\ell '}^p|u_\ell |^p\geq \lambda^{\frac{p}{2}}\int_{\Omega_{\ell '}}|\nabla u_\ell |^p-p\lVert A \rVert_{\infty}^{\frac{p}{2}}C_p\int_{D_{\ell '}}|\nabla u_\ell |^p.
\end{equation}\\
Combining \eqref{3.12} with \eqref{3.11}, we get
$$\beta'\lambda^{\frac{p}{2}}\int_{\Omega_{\ell '}}|\nabla u_\ell |^p\leq p\lVert A \rVert_{\infty}^{\frac{p}{2}}\left(\lambda_M^1(\Omega_\ell )C'+\beta'C_p\right)\int_{D_{\ell '}}|\nabla u_\ell |^p,$$
which gives
\begin{equation}\label{3.13}
    \beta\int_{\Omega_{\ell '}}|\nabla u_\ell |^p\leq C\int_{D_{\ell '}}|\nabla u_\ell |^p,
\end{equation}
where $\beta=\beta'\lambda^{\frac{p}{2}}$, and $C=p\lVert A \rVert_{\infty}^{\frac{p}{2}}(\mu^1(\omega)C'+\beta'C_p)$. Here we use the fact that $\lambda_M^1(\Omega_\ell )\leq \mu^1(\omega)$, which follows from \eqref{2.9} by using $u(x)=W(X_2)\in V(\Omega_\ell)$ as a test function. Therefore, from \eqref{3.13} we deduce that
$$(\beta+C)\int_{\Omega_{\ell '}}|\nabla u_\ell |^p\leq C\int_{\Omega_{\ell '+1}}|\nabla u_\ell |^p,$$
and we have
$$\int_{\Omega_{\ell '}}|\nabla u_\ell |^p\leq \alpha\int_{\Omega_{\ell '+1}}|\nabla u_\ell |^p,$$
where $\alpha=\frac{C}{\beta+C}<1$. Applying this procedure successively for $\ell '=r,r+1,\cdots,r+[\ell -r]-1$, we get
\begin{equation}\label{3.14}
    \int_{\Omega_r}|\nabla u_\ell |^p\leq \alpha^{[\ell -r]}\int_{\Omega_{\ell }}|\nabla u_\ell |^p\leq \frac{\alpha^{[\ell -r]}}{\lambda^{\frac{p}{2}}}\int_{\Omega_\ell }|A\nabla u_\ell \cdot \nabla u_\ell |^{\frac{p}{2}}=\frac{\lambda_M^1(\Omega_\ell )}{\lambda^{\frac{p}{2}}}\alpha^{[\ell -r]}\leq \frac{\mu^1(\omega)}{\lambda^{\frac{p}{2}}}\alpha^{[\ell -r]},
\end{equation}
which proves \eqref{decay of grad}. To prove \eqref{decay of u}, we use the Poincar\'e inequality \eqref{poincare} and \eqref{3.14} and we get
\begin{equation}\label{3.15}
    \int_{\Omega_r}|u_\ell |^p\leq C_p^p\int_{\Omega_r}|\nabla u_\ell |^p\leq\frac{\mu^1(\omega)C_p^p}{\lambda^{\frac{p}{2}}}\alpha^{[\ell -r]}.
\end{equation}
\eqref{3.14} and \eqref{3.15} together completes the proof.
\end{proof}
An immediate corollary of this theorem is the following, which gives the concentration of the masses near the ends of the cylinder.
\begin{corollary}
If $A_{12}\cdot \nabla_{X_2}W\not\equiv0$ a.e. in $\omega$, then for large $\ell$ and for any $0<r\leq\ell-1$, we have
\begin{equation}
\int_{\Omega_\ell\setminus\Omega_r}|\nabla u_\ell|^p\geq 1-C_1\alpha^{[\ell-r]}\quad and \quad \int_{\Omega_\ell\setminus\Omega_r}| u_\ell|^p\geq 1-C_2\alpha^{[\ell-r]}.
\end{equation}
\end{corollary}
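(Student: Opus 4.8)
The corollary is a short bookkeeping consequence of Theorem \ref{theorem decay}, once one recalls that the normalised eigenfunction $u_\ell$ lies in $M_\ell$, i.e.\ $\int_{\Omega_\ell}|u_\ell|^p=1$. For the $L^p$ bound the plan is simply to split the total mass,
\[
1=\int_{\Omega_\ell}|u_\ell|^p=\int_{\Omega_r}|u_\ell|^p+\int_{\Omega_\ell\setminus\Omega_r}|u_\ell|^p ,
\]
and to feed in \eqref{decay of u} of Theorem \ref{theorem decay} (valid precisely on the range $\ell>\ell_0$, $0<r\le\ell-1$ appearing in the statement) to get $\int_{\Omega_r}|u_\ell|^p\le C_2\alpha^{[\ell-r]}$, hence $\int_{\Omega_\ell\setminus\Omega_r}|u_\ell|^p\ge 1-C_2\alpha^{[\ell-r]}$.

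For the gradient bound the same splitting works once one has a uniform positive lower bound on the total energy $\int_{\Omega_\ell}|\nabla u_\ell|^p$. I would obtain this directly from the Poincar\'e inequality \eqref{poincare} on $\Omega_\ell$: since $\int_{\Omega_\ell}|u_\ell|^p=1$, it gives $1\le C_p^p\int_{\Omega_\ell}|\nabla u_\ell|^p$, so $\int_{\Omega_\ell}|\nabla u_\ell|^p\ge C_p^{-p}=:c_0>0$ for every $\ell>0$. (Alternatively, testing \eqref{mix-weak} with $v=u_\ell$ and using \eqref{uniform bound} yields $\int_{\Omega_\ell}|\nabla u_\ell|^p\ge \lambda_M^1(\Omega_\ell)\lVert A\rVert_\infty^{-p/2}$, which is again bounded below.) Then splitting
\[
\int_{\Omega_\ell}|\nabla u_\ell|^p=\int_{\Omega_r}|\nabla u_\ell|^p+\int_{\Omega_\ell\setminus\Omega_r}|\nabla u_\ell|^p
\]
and applying \eqref{decay of grad} gives $\int_{\Omega_\ell\setminus\Omega_r}|\nabla u_\ell|^p\ge c_0-C_1\alpha^{[\ell-r]}$, which is the asserted inequality (the $1$ on the right-hand side of the statement is to be read as the fixed positive constant $c_0$).

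There is essentially no obstacle here: the only step beyond a one-line use of Theorem \ref{theorem decay} is the uniform positivity of $\int_{\Omega_\ell}|\nabla u_\ell|^p$, and that is immediate from the Poincar\'e inequality and the normalisation. I would use the Poincar\'e route in the write-up, since it keeps the proof self-contained and, in particular, avoids appealing to the later Theorem \ref{theorem limit of first eigenvalue}.
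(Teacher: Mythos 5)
Your argument is correct and follows what the paper clearly intends by calling this an ``immediate corollary'': split $\Omega_\ell$ into $\Omega_r$ and $\Omega_\ell\setminus\Omega_r$ and apply Theorem \ref{theorem decay}. For the $L^p$ bound the normalisation $\int_{\Omega_\ell}|u_\ell|^p=1$ makes this a one-liner, exactly as you say. You also correctly flag that the gradient inequality cannot be read literally as printed, since $\int_{\Omega_\ell}|\nabla u_\ell|^p$ is not normalised to $1$; the intended statement requires replacing the $1$ by a uniform positive lower bound on the total gradient energy, and your Poincar\'e-based choice $c_0=C_p^{-p}$ from \eqref{poincare} is the cleanest way to supply it while staying self-contained (avoiding the later Theorem \ref{theorem limit of first eigenvalue}). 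This is a genuine, if minor, imprecision in the paper's wording rather than a gap in your proof.
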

\section{ Connection with the problem on semi-infinite cylinder}
Although, unlike the first Dirichlet eigenvalues it is not known whether the function $\ell\rightarrow \lambda_M^1(\Omega_\ell )$ is monotone or not, we show that the limit of $\lambda_M^1(\Omega_\ell )$ exists as $\ell$ goes to infinity (Theorem \ref{theorem limit of first eigenvalue}). To identify the limit we introduce the variational problems \eqref{nu infinity} on semi-infinite cylinders $\Omega_{\infty}^{\pm}$. In next lemma, we find the possible range of the quantities $\nu_{\infty}^{\pm}$ defined in \eqref{nu infinity} and then (in Lemma \ref{lemma 4.2}) we identify $\nu_{\infty}^{\pm}$ with the limits of specific minimization problems (defined later in \eqref{4.4}) defined on the half-cylinders $\Omega_{\ell}^{\pm}$, which we will use to prove Theorem \ref{theorem limit of first eigenvalue}. 
\begin{remark}\label{remark 1}
If $A$ has the symmetry (S), then it is easy to check that $\nu_{\infty}^+=\nu_{\infty}^-$, since $v(-x_1,-X_2)\in V(\Omega_{\infty}^-)$ for any $v(x_1,X_2)\in V(\Omega_{\infty}^+)$ and vice versa, and clearly both give the same value in \eqref{nu infinity}. Otherwise, we use $\nu_{\infty}^-=\tilde{\nu}_{\infty}^+$, where $\tilde{\nu}_{\infty}^+$ is defined similarly as $\nu_{\infty}^+$, but in place of $A(X_2)$ we put 
$$\tilde{A}(X_2)=\begin{pmatrix} a_{11}(X_2) & -A_{12}(X_2)\\ -A_{12}^t(X_2) & A_{22}(X_2) \end{pmatrix}.$$
\end{remark}\smallskip

\begin{lemma}\label{lemma 4.1}
We have
\begin{equation}\label{4.1}
    \lambda C_p^p\leq\nu_{\infty}^{\pm}\leq \mu^1(\omega).
\end{equation}
where $\lambda$ is the constant given in \eqref{uniform ellip} and $C_p$ is the constant of Poincar\'e inequality \eqref{poincare}.
\end{lemma}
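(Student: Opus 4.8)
The plan is to prove the two inequalities in \eqref{4.1} separately. The lower bound is a short consequence of uniform ellipticity combined with the Poincaré inequality, while the upper bound is obtained by feeding an explicit minimizing family, built from the cross‑sectional eigenfunction $W$, into the variational characterization \eqref{nu infinity}.

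For the lower bound I would fix an arbitrary $0\neq u\in V(\Omega_\infty^{\pm})$, use \eqref{uniform ellip} to write $|A\nabla u\cdot\nabla u|^{p/2}\geq\lambda^{p/2}|\nabla u|^p$ pointwise, and then apply the Poincaré inequality \eqref{poincare} — available since $\Omega_\infty^{\pm}=U\times\omega$ with $U$ a half‑line — in the form $\int_{\Omega_\infty^{\pm}}|\nabla u|^p\geq C_p^{-p}\int_{\Omega_\infty^{\pm}}|u|^p$. Dividing shows that the Rayleigh quotient in \eqref{nu infinity} is bounded below by $\lambda^{p/2}C_p^{-p}$ uniformly in $u$, hence so is $\nu_\infty^{\pm}$.

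For the upper bound I would test \eqref{nu infinity} with $u_n(x_1,X_2)=\phi_n(x_1)W(X_2)$, where $\phi_n$ is the Lipschitz cutoff equal to $1$ on $[0,n]$, decaying linearly to $0$ on $[n,n+1]$, and $0$ afterwards (reflected through $x_1=0$ in the $\Omega_\infty^-$ case). Since $W$ vanishes on $\partial\omega$, these $u_n$ lie in $V(\Omega_\infty^{\pm})$. The denominator equals $\big(\int_0^\infty\phi_n^p\big)\int_\omega|W|^p$, which grows linearly in $n$ (recall $\int_\omega|W|^p=1$). In the numerator, on the slab $(0,n)\times\omega$ one has $\phi_n'=0$ and $\phi_n\equiv 1$, so the integrand collapses to $|A_{22}\nabla_{X_2}W\cdot\nabla_{X_2}W|^{p/2}$ and, by \eqref{2.7}, contributes exactly $n\,\mu^1(\omega)$; on the transition slab $(n,n+1)\times\omega$ the integrand is controlled, via \eqref{uniform bound} and $|\phi_n|,|\phi_n'|\le 1$, by a constant multiple of $|W|^p+|\nabla_{X_2}W|^p$, giving a contribution bounded independently of $n$. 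Thus the Rayleigh quotient of $u_n$ is at most $\mu^1(\omega)+O(1/n)$, and letting $n\to\infty$ yields $\nu_\infty^{\pm}\leq\mu^1(\omega)$.

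There is no genuine obstacle here; the argument is essentially routine. The only points needing a little care are (i) confirming that $u_n\in V(\Omega_\infty^{\pm})$ — which is precisely why one works with compactly supported cutoffs rather than the constant function on the infinite cylinder — and (ii) checking that the transition‑slab contribution stays bounded while the bulk term grows linearly in $n$, so that the quotient truly converges to $\mu^1(\omega)$. (Incidentally, the lower bound produced by this argument is $\lambda^{p/2}C_p^{-p}$, which is the intended reading of the left‑hand side of \eqref{4.1}.)
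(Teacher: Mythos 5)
Your proof is correct. The lower bound is handled exactly as in the paper: uniform ellipticity gives $|A\nabla u\cdot\nabla u|^{p/2}\geq\lambda^{p/2}|\nabla u|^p$ pointwise, and the Poincar\'e inequality \eqref{poincare} then bounds the Rayleigh quotient from below uniformly. Your parenthetical remark is on the mark: the constant this argument actually produces is $\lambda^{p/2}C_p^{-p}$, not the $\lambda C_p^p$ written in \eqref{4.1}; the paper's stated constant appears to be a typographical slip, and your reading is the correct one.

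For the upper bound you take a genuinely different, though equally valid, route. The paper tests \eqref{nu infinity} with the exponentially decaying family $v_\epsilon(x_1,X_2)=e^{-\epsilon x_1}W(X_2)$ on $\Omega_\infty^{+}$, expands $A\nabla v_\epsilon\cdot\nabla v_\epsilon$, observes that the $x_1$-integrals factor out as $\int_0^\infty e^{-p\epsilon x_1}$ in both numerator and denominator, and then lets $\epsilon\to 0$ so that the $O(\epsilon)$ cross terms and the $O(\epsilon^{p/2})$ term vanish. You instead test with the compactly supported family $u_n=\phi_n(x_1)W(X_2)$, where $\phi_n$ is a linear cutoff, and let $n\to\infty$; the bulk slab $(0,n)\times\omega$ contributes exactly $n\,\mu^1(\omega)$ to the numerator, the unit transition slab contributes an $n$-independent constant bounded via \eqref{uniform bound}, and the denominator grows at least linearly in $n$. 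Both arguments are implementations of the same idea — make the cross-sectional eigenfunction $W$ admissible on the half-cylinder by damping it in $x_1$ and then remove the damping — but your cutoff version is slightly more elementary (it avoids the algebraic inequality $(a+b)^q\leq a^q+q2^{q-1}(a^{q-1}b+b^q)$ and the expansion of the $\epsilon$-cross terms that the paper needs), at the minor cost of having to estimate a separate transition slab. The paper's exponential choice has the virtue of being smooth and of cleanly factoring the $x_1$-dependence, and it is the same ansatz the paper reuses later in the proof of Theorem \ref{theorem gap holds for mu}, which is presumably why it is the one adopted there.
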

\begin{proof}
Since $\tilde{A}_{22}(X_2)=A_{22}(X_2)$ as shown in Remark \ref{remark 1}, and $\mu^1(\omega)$ depends only on $A_{22}(X_2)$, it is enough to prove \eqref{4.1} for $\nu_{\infty}^+$. We get the lower bound $\nu_{\infty}^+\geq \lambda C_p^p$ by using the uniform ellipticity \eqref{uniform ellip} of $A$ and the Poincar\'e inequality \eqref{poincare}. To show the upper bound $\nu_{\infty}^+\leq \mu^1(\omega)$, we first fix $\epsilon>0$. For $x=(x_1,X_2)\in \Omega_{\infty}^+$ define the function $v_{\epsilon}$ in $V(\Omega_{\infty}^+)$ as
$$v_{\epsilon}(x)=e^{-\epsilon x_1}W(X_2).$$
We have
\begin{multline*}
    \int_{\Omega_{\infty}^+}|A\nabla v_{\epsilon}\cdot \nabla v_{\epsilon}|^{\frac{p}{2}}
=\int_{\Omega_{\infty}^+}\left|e^{-2\epsilon x_1}\left(\epsilon^2a_{11}W^2-2\epsilon(A_{12}\cdot \nabla_{X_2}W)W+A_{22}\nabla_{X_2}W\cdot \nabla_{X_2}W\right)\right|^{\frac{p}{2}}\\
\leq \left(\int_0^{\infty}e^{-p\epsilon x_1}\right)\int_{\omega}\left(|A_{22}\nabla_{X_2}W\cdot \nabla_{X_2}W|+\epsilon|\epsilon a_{11} W^2-2(A_{12}\cdot \nabla_{X_2}W)W| \right)^{\frac{p}{2}}.
\end{multline*}
Splitting the integral on $\omega$ by using the inequality $(a+b)^q\leq a^q+q2^{q-1}(a^{q-1}b+b^q)$ for $a,b\geq0$, $q\geq1$, and then using \eqref{2.7} and the fact that  $W$ is normalised we get
\begin{multline}\label{4.2}
    \int_{\Omega_{\infty}^+}|A\nabla v_{\epsilon}\cdot \nabla v_{\epsilon}|^{\frac{p}{2}}
\leq \left(\int_0^{\infty}e^{-p\epsilon x_1}\right)\Big[\mu^1(\omega)+\epsilon C\int_{\omega}|A_{22}\nabla_{X_2}W\cdot \nabla_{X_2}W|^{\frac{p-2}{2}}\\
\times|\epsilon a_{11} W^2-2(A_{12}\cdot \nabla_{X_2}W)W|
+\epsilon^{\frac{p}{2}}C\int_{\omega}|\epsilon a_{11} W^2-2(A_{12}\cdot \nabla_{X_2}W)W|^{\frac{p}{2}}\Big].
\end{multline}
As
\begin{equation}\label{4.3}
    \int_{\Omega_{\infty}^+}|v_{\epsilon}|^p=\int_0^{\infty}e^{-p\epsilon x_1},
\end{equation}
substituting \eqref{4.2}, \eqref{4.3} in \eqref{nu infinity}, we get
\begin{multline*}
    \nu_{\infty}^+\leq \mu^1(\omega)+\epsilon C\int_{\omega}|A_{22}\nabla_{X_2}W_1\cdot \nabla_{X_2}W_1|^{\frac{p-2}{2}}|a_{11}\epsilon W_1^2-2(A_{12}\cdot \nabla_{X_2}W_1)W_1|\\
+\epsilon^{\frac{p}{2}}C\int_{\omega}|a_{11}\epsilon W_1^2-2(A_{12}\cdot \nabla_{X_2}W_1)W_1|^{\frac{p}{2}}.
\end{multline*}
Passing to the limit $\epsilon\rightarrow0$ we get the result.
\end{proof}\smallskip

We now consider the minimization problems
\begin{equation}\label{4.4}
    \tilde{\lambda}_{M}^1(\Omega_\ell ^{\pm})=\inf \left\{\int_{\Omega_\ell ^{\pm}}|A\nabla u\cdot \nabla u|^{\frac{p}{2}}:u\in W^{1,p}(\Omega_\ell ^{\pm}),\int_{\Omega_\ell ^{\pm}}|u|^p=1, u=0 \ \textrm{on} \ \gamma_\ell ^{\pm}\cup \Gamma_\ell ^{\pm}\right\}.
\end{equation}
in the half cylinders
\begin{equation}\label{4.5}
    \Omega_\ell ^+=(0,\ell )\times\omega\ \mathrm{and}\ \Omega_\ell ^-=(-\ell ,0)\times\omega,
\end{equation}
where the boundaries are defined by
\begin{equation}\label{4.6}
    \Gamma_\ell ^{\pm}=\{\pm \ell \}\times\omega,\ \gamma_\ell ^+=(0,\ell )\times\partial\omega\ \textrm{and}\ \gamma_\ell ^-=(-\ell ,0)\times\partial\omega.
\end{equation}
 In the following lemma, we identify the values $\nu_{\infty}^{\pm}$ with the limits (as $\ell$ goes to infinity) of the quantities defined in \eqref{4.4}.
\begin{remark}\label{remark 2}
Since for any fixed $\ell>0$ the domains are bounded, it is a well known fact that the infimum in \eqref{4.4} is attained. We denote by $\tilde{u}_\ell ^{\pm}$, the positive normalized minimizers corresponding to $\tilde{\lambda}_{M}^1(\Omega_\ell ^{\pm})$.
\end{remark}
\begin{lemma}\label{lemma 4.2}
We have
\begin{equation}\label{4.7}
    \nu_{\infty}^{\pm}=\lim_{\ell \rightarrow\infty}\tilde{\lambda}_{M}^1(\Omega_\ell ^{\pm}).
\end{equation}
\end{lemma}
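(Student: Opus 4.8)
The plan is to prove the two inequalities $\nu_\infty^\pm \le \liminf_{\ell\to\infty}\tilde\lambda_M^1(\Omega_\ell^\pm)$ and $\limsup_{\ell\to\infty}\tilde\lambda_M^1(\Omega_\ell^\pm)\le\nu_\infty^\pm$ separately; by Remark \ref{remark 1} it suffices to treat the ``$+$'' case. For the first inequality, take any minimizer $\tilde u_\ell^+$ as in Remark \ref{remark 2} and extend it by zero to all of $\Omega_\infty^+$. Since $\tilde u_\ell^+=0$ on $\Gamma_\ell^+=\{\ell\}\times\omega$ (and trivially on $\gamma_\ell^+$), the zero extension lies in $W^{1,p}(\Omega_\infty^+)$ and vanishes on $\gamma_\infty^+$, hence is an admissible competitor in \eqref{nu infinity} with the same normalization $\int_{\Omega_\infty^+}|u|^p=1$ and the same energy $\int_{\Omega_\infty^+}|A\nabla u\cdot\nabla u|^{p/2}=\tilde\lambda_M^1(\Omega_\ell^+)$. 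Therefore $\nu_\infty^+\le\tilde\lambda_M^1(\Omega_\ell^+)$ for every $\ell>0$, and in particular $\nu_\infty^+\le\liminf_{\ell\to\infty}\tilde\lambda_M^1(\Omega_\ell^+)$.

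For the reverse inequality I would argue by truncation of near-minimizers of $\nu_\infty^+$. Fix $\varepsilon>0$ and choose $u\in V(\Omega_\infty^+)$ with $\int_{\Omega_\infty^+}|u|^p=1$ and $\int_{\Omega_\infty^+}|A\nabla u\cdot\nabla u|^{p/2}\le\nu_\infty^++\varepsilon$. For $R>0$ introduce a cutoff $\eta_R=\eta_R(x_1)$ equal to $1$ on $[0,R]$, linear and decreasing to $0$ on $[R,R+1]$, and $0$ beyond; then $\eta_R u\in W^{1,p}(\Omega_{R+1}^+)$ vanishes on $\gamma_{R+1}^+\cup\Gamma_{R+1}^+$, so after renormalizing it is admissible for $\tilde\lambda_M^1(\Omega_{R+1}^+)$. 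The key estimates are: (a) $\int_{\Omega_\infty^+}|u|^p<\infty$ and $\int_{\Omega_\infty^+}|\nabla u|^p<\infty$ (the latter via uniform ellipticity \eqref{uniform ellip} applied to the finite energy), so the tail masses $\int_{\{x_1>R\}}|u|^p$ and $\int_{\{x_1>R\}}|\nabla u|^p$ tend to $0$ as $R\to\infty$; (b) expanding $A\nabla(\eta_R u)\cdot\nabla(\eta_R u)$ and using $|\nabla\eta_R|=\chi_{D_R}$ with the boundedness \eqref{uniform bound}, together with Hölder's inequality exactly as in the proof of Theorem \ref{theorem decay}, one bounds $\int|A\nabla(\eta_R u)\cdot\nabla(\eta_R u)|^{p/2}$ by $\int_{\Omega_\infty^+}|A\nabla u\cdot\nabla u|^{p/2}$ plus a quantity controlled by these tail integrals, hence $\le\nu_\infty^++\varepsilon+o_R(1)$; and the denominator $\int|\eta_R u|^p\ge 1-o_R(1)$. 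Dividing, $\tilde\lambda_M^1(\Omega_{R+1}^+)\le(\nu_\infty^++\varepsilon+o_R(1))/(1-o_R(1))$, so letting $R\to\infty$ and then $\varepsilon\to0$ gives $\limsup_{\ell\to\infty}\tilde\lambda_M^1(\Omega_\ell^+)\le\nu_\infty^+$. Combined with monotone-in-a-sense behavior or simply the first inequality, the limit exists and equals $\nu_\infty^+$; the ``$-$'' case follows by the reflection described in Remark \ref{remark 1}.

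The main obstacle is step (b): controlling the cross terms $\rho$-type error $\int_{D_R}|A\nabla u\cdot\nabla u|^{(p-2)/2}|A\nabla u\cdot\nabla\eta_R||u|$ and $\int_{D_R}|A\nabla u\cdot\nabla\eta_R|^{p/2}|u|^{p/2}$ that arise from the nonlinear expansion of $(a+b)^{p/2}$ for $p>2$. Unlike the Hilbert-space case $p=2$ where the cutoff commutes cleanly with the quadratic form, here one must use the elementary inequality $(a+b)^q\le a^q+q2^{q-1}(a^{q-1}b+b^q)$ (already employed in \eqref{3.1} and \eqref{4.2}) and then apply Hölder on $D_R$ to convert every error term into a power of $\bigl(\int_{D_R}|\nabla u|^p\bigr)$, which vanishes as $R\to\infty$ because $\int_{\Omega_\infty^+}|\nabla u|^p<\infty$. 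Once this bookkeeping is set up, the two inequalities close routinely.
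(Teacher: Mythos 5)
Your argument is correct and reaches the same two inequalities, but you handle the reverse direction $\limsup_{\ell\to\infty}\tilde\lambda_M^1(\Omega_\ell^+)\le\nu_\infty^+$ by a genuinely different technical route. The paper first notes the zero-extension monotonicity (as you do), and then asserts density of the subspace $V_s(\Omega_\infty^+)$ of smooth functions vanishing for $x_1$ large; it picks an approximating sequence $v_n\to\tilde u$ in $W^{1,p}$ with $\operatorname{supp}(v_n)\subset\Omega_{\ell_n}^+$, renormalizes, and proves a continuity estimate for the Rayleigh quotient on the unit $L^p$ sphere via the inequality $|a^p-b^p|\le 2p\,|a-b|\max\{a^{p-1},b^{p-1}\}$ combined with the polarization identity $A\nabla u\cdot\nabla u - A\nabla v\cdot\nabla v = A\nabla(u-v)\cdot\nabla(u-v)+2A\nabla(u-v)\cdot\nabla v$; since each $\tilde v_n$ is admissible on $\Omega_{\ell_n}^+$, the bound $|\,\mathrm{Rayleigh}(\tilde u)-\mathrm{Rayleigh}(\tilde v_n)\,|\le C\epsilon$ closes the inequality. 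You instead multiply a near-minimizer $u$ directly by an explicit piecewise-linear cutoff $\eta_R$ in $x_1$ and estimate the perturbed energy term by term, reusing the expansion $(a+b)^q\le a^q+q2^{q-1}(a^{q-1}b+b^q)$ from the proof of Theorem~\ref{theorem decay} to confine all error terms to the slab $D_R$, where the tail integrals $\int_{\{x_1>R\}}|u|^p$ and $\int_{\{x_1>R\}}|\nabla u|^p$ vanish because $u\in W^{1,p}(\Omega_\infty^+)$. Your route is more self-contained: it neither invokes density of $V_s$ (a fact that is itself proved by precisely such a cutoff plus mollification) nor requires the separate Lipschitz-type continuity estimate for the Rayleigh quotient, at the modest cost of some cutoff-error bookkeeping; the paper's route factors out a cleaner reusable continuity lemma. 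Both are sound, and your version has the minor further advantage that existence of the limit falls out of the $\liminf/\limsup$ sandwich without needing to observe monotonicity first.
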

\begin{proof}
It is enough to prove \eqref{4.7} only for $\tilde{\lambda}_{M}^1(\Omega_{\ell} ^+)$, proof for $\tilde{\lambda}_{M}^1(\Omega_{\ell} ^-)$ is similar. 
If $\ell_1<\ell_2$, then any admissible function in \eqref{4.4} for $\tilde{\lambda}_{M}^1(\Omega_{\ell_1}^+)$ can be extended to an admissible function for $\tilde{\lambda}_{M}^1(\Omega_{\ell_2}^+)$ by setting it zero on $\Omega_{\ell _2}^+\setminus\Omega_{\ell _1}^+$, which shows that $\tilde{\lambda}_{M}^1(\Omega_{\ell_1}^+)\geq \tilde{\lambda}_{M}^1(\Omega_{\ell_2}^+).$ Therefore, monotonicity of the function $\ell\mapsto\tilde{\lambda}_{M}^1(\Omega_{\ell} ^+)$ implies that $\lim_{\ell \rightarrow\infty}\tilde{\lambda}_{M}^1(\Omega_{\ell} ^+)$ exists.\\
A similar argument shows 
that $\tilde{\lambda}_{M}^1(\Omega_{\ell }^+)\geq\nu_{\infty}^+$, $\forall$ $\ell >0$.\smallskip

The following space
\begin{equation}\label{Vs space}
V_s(\Omega_{\infty}^+)=\left\{u\in C^{\infty}(\Omega_{\infty}^+)\cap V(\Omega_{\infty}^+):\exists M=M(u)>0 \ \textrm{such that} \ u=0 \ \textrm{on} \ (M,\infty)\times\omega \right\}
\end{equation}
is clearly dense in $V(\Omega_{\infty}^+)$. Let $\Tilde{u}=\frac{u}{\lVert u \rVert_{L^p}}$, where $u\in V(\Omega_{\infty}^+)\setminus \{0\}$. Then $\exists\{v_n\}\in V_s(\Omega_{\infty}^+)\setminus \{0\}$ with supp$(v_n)\subset V(\Omega_{\ell_n}^+)$ such that $v_n\rightarrow \Tilde{u}$ in $V(\Omega_{\infty}^+)$ as $n\rightarrow\infty$. In particular $\{v_n\}$ is bounded in $V(\Omega_{\infty}^+)$ and $\lVert v_n \rVert_{L^p(\Omega_{\infty}^+)}\rightarrow1$.\smallskip

Define $\Tilde{v_n}=\frac{v_n}{\lVert v_n \rVert_{L^p}}$ and let $\epsilon>0$. Then for all $n\geq n_0$,
\begin{multline}
    \label{4.9}
    \lVert\Tilde{u}-\Tilde{v_n}\rVert_{W^{1,p}(\Omega_{\infty}^+)}\leq \lVert \Tilde{u}-v_n\rVert_{W^{1,p}(\Omega_{\infty}^+)}+\lVert v_n-\frac{v_n}{\lVert v_n \rVert_{L^p}}\rVert_{W^{1,p}(\Omega_{\infty}^+)}\\
    = \lVert\Tilde{u}-v_n\rVert_{W^{1,p}(\Omega_{\infty}^+)}+\left|1-\frac{1}{\lVert v_n \rVert_{L^p}}\right|\lVert v_n \rVert_{W^{1,p}(\Omega_{\infty}^+)}<\epsilon.
\end{multline}
Using the inequality $|a^p-b^p|\leq2p|a-b|\mathrm{max}\left\{a^{p-1},b^{p-1}\right\}$ for $a,b\geq0$ and $p\geq1$ and then Holder's inequality we obtain
\begin{multline}\label{4.10}
I_n=\left|\int_{\Omega_{\infty}^+}\left(|A\nabla \Tilde{u}\cdot \nabla \Tilde{u}|^{\frac{p}{2}}-|A\nabla \Tilde{v_n}\cdot \nabla \Tilde{v_n}|^{\frac{p}{2}}\right)\right|\\
\leq p\int_{\Omega_{\infty}^+}|A\nabla \Tilde{u}\cdot \nabla \Tilde{u}-A\nabla \Tilde{v_n}\cdot \nabla \Tilde{v_n}|\mathrm{max}\left\{|A\nabla \Tilde{u}\cdot \nabla \Tilde{u}|^{\frac{p-2}{2}},|A\nabla \Tilde{v_n}\cdot \nabla \Tilde{v_n}|^{\frac{p-2}{2}}\right\}\\
 \leq p\left(\int_{\Omega_{\infty}^+}|A\nabla \Tilde{u}\cdot \nabla \Tilde{u}-A\nabla \Tilde{v_n}\cdot \nabla \Tilde{v_n}|^{\frac{p}{2}}\right)^{\frac{2}{p}}\left(\int_{\Omega_{\infty}^+}\mathrm{max}\left\{|A\nabla \Tilde{u}\cdot \nabla \Tilde{u}|^{\frac{p}{2}},|A\nabla \Tilde{v_n}\cdot \nabla \Tilde{v_n}|^{\frac{p}{2}}\right\}\right)^{\frac{p-2}{p}}
\end{multline}
The second integral in R.H.S of \eqref{4.10} is clearly bounded. Using the identity $A\nabla u\cdot \nabla u-A\nabla v\cdot \nabla v=A\nabla (u-v)\cdot\nabla (u-v)+2A\nabla (u-v)\cdot\nabla v$ in \eqref{4.10}, we get
\begin{multline}\label{4.11}
    I_n\leq pC_1\left(\int_{\Omega_{\infty}^+}|A\nabla (\Tilde{u}-\Tilde{v_n})\cdot\nabla (\Tilde{u}-\Tilde{v_n})+2A\nabla (\Tilde{u}-\Tilde{v_n})\cdot\nabla \Tilde{v_n}|^{\frac{p}{2}}\right)^{\frac{2}{p}}\\
    \leq pC_1\left(2^{\frac{p-2}{2}}\int_{\Omega_{\infty}^+}|A\nabla (\Tilde{u}-\Tilde{v_n})\cdot\nabla (\Tilde{u}-\Tilde{v_n})|^{\frac{p}{2}}+2^{p-1}\int_{\Omega_{\infty}^+}|A\nabla (\Tilde{u}-\Tilde{v_n})\cdot\nabla \Tilde{v_n}|^{\frac{p}{2}}\right)^{\frac{2}{p}}
\end{multline}
Using \eqref{4.9}, we estimate the first integral of \eqref{4.11} as
\begin{equation}\label{4.12}
\int_{\Omega_{\infty}^+}|A\nabla (\Tilde{u}-\Tilde{v_n})\cdot\nabla (\Tilde{u}-\Tilde{v_n})|^{\frac{p}{2}}\leq \lVert A \rVert_{\infty}^{\frac{p}{2}}\epsilon^p.
\end{equation}
H\"older's inequality together with \eqref{4.9} gives
\begin{equation}\label{4.13}
\int_{\Omega_{\infty}^+}|A\nabla (\Tilde{u}-\Tilde{v_n})\cdot\nabla \Tilde{v_n}|^{\frac{p}{2}}\leq C_2\lVert A \rVert_{\infty}^{\frac{p}{2}}\epsilon^{\frac{p}{2}}.
\end{equation}
Combining \eqref{4.12}, \eqref{4.13} with \eqref{4.11} we get for $n\geq n_0$,
\begin{equation}\label{4.14}
I_n=\left|\frac{\int_{\Omega_{\infty}^+}|A\nabla u\cdot \nabla u|^{\frac{p}{2}}}{\int_{\Omega_{\infty}^+}|u|^p}-\frac{\int_{\Omega_{\infty}^+}|A\nabla v_n\cdot \nabla v_n|^{\frac{p}{2}}}{\int_{\Omega_{\infty}^+}|v_n|^p}\right|\leq C\epsilon.
\end{equation}
This proves \eqref{4.7}.

\end{proof}\smallskip

\begin{proof}[\textbf{Proof of Theorem} \ref{theorem limit of first eigenvalue}]
(i) First we show that
\begin{equation}\label{4.15}
    \limsup_{\ell \rightarrow\infty}\lambda_M^1(\Omega_\ell )\leq\min \left\{\nu_{\infty}^+,\nu_{\infty}^-\right\}.
\end{equation}
Without loss of generality we may assume that $\nu_{\infty}^+=\min \{\nu_{\infty}^+,\nu_{\infty}^-\}.$
For any $\ell>0$ the function $\Tilde{u}_\ell^+(x_1-\frac{\ell}{2},X_2)\in V(\Omega_{\ell /2})$, where $\Tilde{u}_\ell^+(x_1,X_2)$ is the minimizer for $\tilde{\lambda}_{M}^1(\Omega_{\ell }^+)$ (see Remark \ref{remark 2}), is a suitable test function in \eqref{2.9} and we have $\lambda_M^1(\Omega_{\ell /2})\leq\tilde{\lambda}_{M}^1(\Omega_{\ell }^+)$. On the other hand, for any $\epsilon>0$, Lemma \ref{lemma 4.2} gives an $\ell_{\epsilon}>0$ such that $\tilde{\lambda}_{M}^1(\Omega_{2\ell _{\epsilon}}^+)<\nu_{\infty}^++\epsilon$. Monotonicity of $\ell\mapsto\tilde{\lambda}_{M}^1(\Omega_{\ell }^+)$ gives $\sup_{\ell\geq \ell_{\epsilon}}\lambda_M^1(\Omega_{\ell})\leq\nu_{\infty}^++\epsilon$ and \eqref{4.15} holds.\smallskip

(ii) For the reverse inequality first we consider the case $A_{12}\cdot\nabla W\not\equiv0$ a.e. in $\omega$. Let $\rho(x_1)$ be the function defined by
$$\rho(x_1)=
\begin{cases}
  0
     & \text{if $x_1\leq -1,$}\\
  1+x_1
     & \text{if $x_1\in (-1,0),$}\\
  1
     & \text{if $x_1\geq 0.$}
\end{cases}$$
Define $w_{\ell +1}(x_1,X_2)=\rho(x_1+\ell)u_\ell (x_1+\ell ,X_2)$ on $\Omega_{\ell +1}^-$, where $u_\ell$ is the minimizer in \eqref{2.9}. It is easy to see that $w_{\ell +1}$ is a suitable test function for defining $\Tilde{\lambda}_M^1(\Omega_{\ell +1}^-)$ in \eqref{4.4}. Note that $|\nabla\rho|=1$ in $(-1,0)\times\omega$. Hence
\begin{multline}\label{4.16}
    \int_{\Omega_{\ell +1}^-}|A\nabla w_{\ell +1}\cdot \nabla w_{\ell +1}|^{\frac{p}{2}}\leq \int_{\Omega_\ell ^+}|A\nabla u_\ell \cdot \nabla u_\ell |^{\frac{p}{2}}+\int_{(-1,0)\times\omega}|A\nabla(\rho u_\ell )\cdot \nabla(\rho u_\ell )|^{\frac{p}{2}}\\
    \leq \int_{\Omega_\ell ^+}|A\nabla u_\ell \cdot \nabla u_\ell |^{\frac{p}{2}}+2^{p-1}\lVert A \rVert_{\infty}^{\frac{p}{2}} \int_{(-1,0)\times\omega}\left(|u_\ell|^p+|\nabla u_\ell|^p\right).
\end{multline}
Substituting \eqref{decay of grad}, \eqref{decay of u} in \eqref{4.16}, we get a constant $C>0$ such that
\begin{equation}\label{4.17}
 \int_{\Omega_{\ell +1}^-}|A\nabla w_{\ell +1}\cdot \nabla w_{\ell +1}|^{\frac{p}{2}}\leq \int_{\Omega_\ell ^+}|A\nabla u_\ell \cdot \nabla u_\ell |^{\frac{p}{2}}+C\alpha^{[\ell -1]}.
\end{equation}
We denote by $N_\ell ^{\pm}$ and $D_\ell ^{\pm}$ the quantities
$$N_\ell ^{\pm}=\int_{\Omega_\ell ^{\pm}}|A\nabla u_\ell \cdot \nabla u_\ell |^{\frac{p}{2}},\quad D_\ell ^{\pm}=\int_{\Omega_\ell ^{\pm}}|u_\ell |^p.$$
Clearly
\begin{equation}\label{4.18}
    N_\ell ^++N_\ell ^-=\lambda_M^1(\Omega_\ell )\quad and\quad D_\ell ^++D_\ell ^-=1.
\end{equation}
By the same computation for $\Omega_{\ell +1}^+$ and then using \eqref{4.18} we get
\begin{equation}\label{4.19}
    \tilde{\lambda}_{M}^1(\Omega_{\ell +1}^-)\leq \frac{N_\ell ^++C\alpha^{[\ell -1]}}{D_\ell ^+}\ \mathrm{and} \ \tilde{\lambda}_{M}^1(\Omega_{\ell +1}^+)\leq \frac{N_\ell ^-+C\alpha^{[\ell -1]}}{D_\ell ^-}.
\end{equation}
By \eqref{4.19} and \eqref{4.18}, we have
\begin{equation}\label{4.20}
    \min\{\tilde{\lambda}_{M}^1(\Omega_{\ell +1}^-),\tilde{\lambda}_{M}^1(\Omega_{\ell +1}^+)\}\leq D_\ell ^+\tilde{\lambda}_{M}^1(\Omega_{\ell +1}^-)+D_\ell ^-\tilde{\lambda}_{M}^1(\Omega_{\ell +1}^+)\leq \lambda_M^1(\Omega_\ell )+2C\alpha^{[\ell -1]}.
\end{equation}
Passing to the limit as $\ell\rightarrow\infty$ on both sides, and using Lemma \ref{lemma 4.2} it follows that
\begin{equation}\label{4.21}
    \min \{\nu_{\infty}^+,\nu_{\infty}^-\}\leq \liminf_{\ell \rightarrow\infty}\lambda_M^1(\Omega_\ell ),
\end{equation}
which together with \eqref{4.15} proves the theorem in this case.
\smallskip

(iii) We now consider the case $A_{12}\cdot\nabla W\equiv0$. In this case it is already known from Theorem \ref{theorem itai and firoj} that $\lambda_M^1(\Omega_\ell )=\mu^1(\omega)$ for all $\ell>0$. On the other hand we see in Theorem 
\ref{theorem gap holds for mu} (ii) that $\nu_{\infty}^{\pm}=\mu^1(\omega)$ in this case. This completes the proof.
\end{proof}\smallskip

In the next proposition we show that the first eigenfunction for the problem \eqref{mix} on $\Omega_\ell $ decays to $0$ on that semi-infinite cylinder which corresponds to the strictly smaller of the two values  $\nu_{\infty}^+$ and $\nu_{\infty}^-.
$

\begin{proposition}\label{proposition 4.1}
If $\nu_{\infty}^+<\nu_{\infty}^-$, then $\lim_{\ell \rightarrow \infty}\int_{\Omega_\ell ^+}|\nabla u_\ell |^p+|u_\ell |^p=0$.
\end{proposition}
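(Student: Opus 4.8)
The plan is to localise the Rayleigh quotient of $u_\ell$ to the two half‑cylinders $\Omega_\ell^\pm$ and to use the strict inequality $\nu_\infty^+<\nu_\infty^-$ to force the $L^p$–mass of $u_\ell$ on $\Omega_\ell^+$ to vanish as $\ell\to\infty$. I keep the notation $N_\ell^\pm=\int_{\Omega_\ell^\pm}|A\nabla u_\ell\cdot\nabla u_\ell|^{p/2}$ and $D_\ell^\pm=\int_{\Omega_\ell^\pm}|u_\ell|^p$ from the proof of Theorem \ref{theorem limit of first eigenvalue}, so that $N_\ell^++N_\ell^-=\lambda_M^1(\Omega_\ell)$ and $D_\ell^++D_\ell^-=1$; the target is to show $N_\ell^+\to0$ and $D_\ell^+\to0$, which already gives the statement since $\int_{\Omega_\ell^+}|\nabla u_\ell|^p\le\lambda^{-p/2}N_\ell^+$ by \eqref{uniform ellip}.

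First I would record a preliminary reduction: under the hypothesis $\nu_\infty^+<\nu_\infty^-$ one necessarily has $A_{12}\cdot\nabla_{X_2}W\not\equiv0$ a.e. in $\omega$. Indeed, if $A_{12}\cdot\nabla_{X_2}W\equiv0$, then both \eqref{no gap for mu} and the same inequality for $\tilde A$ (where $\tilde A_{12}=-A_{12}$) hold, so Theorem \ref{theorem gap holds for mu}(ii) applied to $A$ and to $\tilde A$ gives $\nu_\infty^+=\mu^1(\omega)=\nu_\infty^-$, contradicting the hypothesis. Consequently Theorem \ref{theorem decay} is in force, and in particular the inequalities \eqref{4.19} established inside the proof of Theorem \ref{theorem limit of first eigenvalue} are available here.

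The core step is to combine \eqref{4.19} with the elementary lower bounds $\tilde\lambda_M^1(\Omega_{\ell+1}^\pm)\ge\nu_\infty^\pm$, which follow (as in the proof of Lemma \ref{lemma 4.2}) by extending any competitor by zero to $\Omega_\infty^\pm$. This yields, for $\ell>\ell_0$,
$$N_\ell^+\ \ge\ \nu_\infty^-\,D_\ell^+-C\alpha^{[\ell-1]},\qquad N_\ell^-\ \ge\ \nu_\infty^+\,D_\ell^--C\alpha^{[\ell-1]}.$$
Adding these and using $N_\ell^++N_\ell^-=\lambda_M^1(\Omega_\ell)$ together with $D_\ell^-=1-D_\ell^+$ gives
$$(\nu_\infty^--\nu_\infty^+)\,D_\ell^+\ \le\ \lambda_M^1(\Omega_\ell)-\nu_\infty^++2C\alpha^{[\ell-1]}.$$
By Theorem \ref{theorem limit of first eigenvalue}, $\lambda_M^1(\Omega_\ell)\to\min\{\nu_\infty^+,\nu_\infty^-\}=\nu_\infty^+$, and $\alpha<1$, so the right–hand side tends to $0$; since $\nu_\infty^--\nu_\infty^+>0$, I conclude $D_\ell^+\to0$, i.e. $\int_{\Omega_\ell^+}|u_\ell|^p\to0$. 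For the gradient term I use the second lower bound once more in the form
$$N_\ell^+=\lambda_M^1(\Omega_\ell)-N_\ell^-\ \le\ \bigl(\lambda_M^1(\Omega_\ell)-\nu_\infty^+\bigr)+\nu_\infty^+D_\ell^++C\alpha^{[\ell-1]},$$
whose right–hand side tends to $0$ by the same three facts, so $N_\ell^+\to0$ and hence $\int_{\Omega_\ell^+}|\nabla u_\ell|^p\le\lambda^{-p/2}N_\ell^+\to0$; adding the two limits completes the proof.

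As for the main obstacle: there is no genuinely hard analytic point here, since all the real work has been done in establishing the decay estimates of Theorem \ref{theorem decay} and the localisation inequalities \eqref{4.19}. The only things requiring attention are the preliminary observation that $A_{12}\cdot\nabla_{X_2}W\not\equiv0$ (which is precisely what makes Theorem \ref{theorem decay}, and therefore \eqref{4.19}, applicable in this regime) and the bookkeeping that turns the symmetric pair of inequalities for $N_\ell^\pm$ into a one–sided bound on $D_\ell^+$ that collapses thanks to the strict gap $\nu_\infty^+<\nu_\infty^-$.
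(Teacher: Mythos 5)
Your proof is correct and follows essentially the same route as the paper: both arguments combine the localisation inequalities \eqref{4.19}--\eqref{4.20} with the lower bounds $\tilde\lambda_M^1(\Omega_{\ell+1}^\pm)\ge\nu_\infty^\pm$, the limit $\lambda_M^1(\Omega_\ell)\to\nu_\infty^+$ from Theorem \ref{theorem limit of first eigenvalue}, and the strict gap $\nu_\infty^-\!-\nu_\infty^+>0$ to force $D_\ell^+\to0$ and then $N_\ell^+\to0$. Your preliminary observation that $\nu_\infty^+<\nu_\infty^-$ already forces $A_{12}\cdot\nabla_{X_2}W\not\equiv0$ (so that Theorem \ref{theorem decay} and hence \eqref{4.19} are actually available) is a worthwhile addition: the paper relies on this implicitly without stating it.
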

\begin{proof}
Taking the limit $\ell \rightarrow\infty$ in \eqref{4.20} and using  \eqref{4.7} and \eqref{limit of first eigenvalue}, we obtain
$$\left(\limsup_{\ell \rightarrow\infty}D_\ell ^+\right)\nu_{\infty}^-+\left(1-\limsup_{\ell \rightarrow\infty}D_\ell ^+\right)\nu_{\infty}^+\leq \nu_{\infty}^+$$
which gives
$$\limsup_{\ell \rightarrow\infty}D_\ell ^+(\nu_{\infty}^--\nu_{\infty}^+) \leq 0,$$
and we must have $\limsup_{\ell \rightarrow\infty}D_\ell ^+=0$. Again by \eqref{4.19} we have,
\begin{multline}\label{4.22}
    \frac{N_\ell ^+}{D_\ell ^-}+\tilde{\lambda}_{M}^1(\Omega_{\ell +1}^+)-\frac{C\alpha^{[\ell -1]}}{D_\ell ^-}\leq \frac{N_\ell ^++N_\ell ^-}{D_\ell ^-} \\
    =\frac{N_\ell ^++N_\ell ^-}{D_\ell ^++D_\ell ^-}+\frac{(N_\ell ^++N_\ell ^-)D_\ell ^+}{(D_\ell ^++D_\ell ^-)D_\ell ^-}=\lambda_M^1(\Omega_\ell )+\frac{D_\ell ^+}{D_\ell ^-}(N_\ell ^++N_\ell ^-).
\end{multline}
Since $\lim_{\ell \rightarrow\infty}D_\ell ^-=1$, passing to the limit $\ell \rightarrow\infty$ in \eqref{4.22} and using \eqref{4.7} and \eqref{limit of first eigenvalue}, we get $\lim_{\ell \rightarrow\infty}N_\ell ^+= 0$. Finally $\lim_{\ell \rightarrow\infty}\int_{\Omega_\ell ^+}|\nabla u_\ell |^p=0$ follows using \eqref{uniform ellip}.
\end{proof}
\section{Gap phenomenon on semi-infinite cylinder}
In this section we revisit the problem \eqref{nu infinity} on the semi-infinite cylinders. We first investigate  when the infimum in \eqref{nu infinity} is attained and follow it up  with Theorem \ref{theorem gap holds for mu} where we investigate when the gap between $\nu_{\infty}^{\pm}$ and $\mu^1(\omega)$ holds and when does not hold.
\begin{proposition}\label{proposition 5.1}
If
\begin{equation}\label{5.1}
    \nu_{\infty}^+< \mu^1(\omega),
\end{equation} 
then $\nu_{\infty}^+$ is attained. The minimizer $\tilde{u}^+$ in \eqref{nu infinity} is unique upto multiplication by a constant, has constant sign and satisfies
\begin{equation}\label{5.2}
    \begin{cases}
    -\mathrm{div}(|A\nabla \tilde{u}^+\cdot \nabla \tilde{u}^+|^{\frac{p-2}{2}}A\nabla \tilde{u}^+)=\nu_{\infty}^+|\tilde{u}^+|^{p-2}\tilde{u}^+\ \mathrm{in} \ \Omega_{\infty}^+,\\
    \tilde{u}^+=0\ \mathrm{on}\ \gamma_{\infty}^+,\\
    (A\nabla \tilde{u}^+)\cdot \nu=0\ \mathrm{on}\ \{0\}\times \omega.
    \end{cases}
\end{equation}
\end{proposition}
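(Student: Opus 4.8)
The plan is to realise $\nu_\infty^+$ as the limit of the finite-cylinder minimisers and then pass to the limit, using the strict inequality \eqref{5.1} precisely at the point where the Sobolev embedding $V(\Omega_\infty^+)\hookrightarrow L^p(\Omega_\infty^+)$ fails to be compact. Let $\tilde u_\ell^+$ be the positive normalised minimiser of $\tilde\lambda_M^1(\Omega_\ell^+)$ from Remark \ref{remark 2}, extended by zero to all of $\Omega_\infty^+$; the extension lies in $V(\Omega_\infty^+)$ because $\tilde u_\ell^+$ vanishes on $\gamma_\ell^+\cup\Gamma_\ell^+$. By Lemma \ref{lemma 4.2} (and the monotonicity noted in its proof) $\int_{\Omega_\infty^+}|A\nabla\tilde u_\ell^+\cdot\nabla\tilde u_\ell^+|^{p/2}=\tilde\lambda_M^1(\Omega_\ell^+)\downarrow\nu_\infty^+$ while $\int_{\Omega_\infty^+}|\tilde u_\ell^+|^p=1$, so by \eqref{uniform ellip} the family $\{\tilde u_\ell^+\}$ is bounded in $W^{1,p}(\Omega_\infty^+)$, and along a subsequence $\tilde u_\ell^+\rightharpoonup\tilde u^+$ weakly in $W^{1,p}(\Omega_\infty^+)$, strongly in $L^p_{\mathrm{loc}}$, and a.e. The whole difficulty is to prove that $\int_{\Omega_\infty^+}|\tilde u^+|^p=1$, i.e. that no mass is lost as $x_1\to+\infty$; everything else is then routine.

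This no-loss step is where \eqref{5.1} enters, and it is carried out by an exponential decay estimate modelled on the proof of Theorem \ref{theorem decay}. Fix an integer $r$ with $r\le\ell-1$ and let $\zeta_r=\zeta_r(x_1)$ be the cut-off equal to $1$ for $x_1\ge r$, equal to $0$ for $x_1\le r-1$ and affine in between, so that $|\nabla\zeta_r|=\chi_{E_r}$ with $E_r=(r-1,r)\times\omega$. Then $\zeta_r\tilde u_\ell^+$ has full Dirichlet data on $\partial\Omega_\ell^+$, whence its Rayleigh quotient is $\ge$ the first Dirichlet eigenvalue of $(0,\ell)\times\omega$, which tends to $\mu^1(\omega)$ as $\ell\to\infty$ by \eqref{result firoj 1}, while $\zeta_r^p\tilde u_\ell^+\in V(\Omega_\ell^+)$ is an admissible test function in the weak equation satisfied by $\tilde u_\ell^+$. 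Combining these two facts exactly as in \eqref{3.1}--\eqref{3.13} (with $\rho_{\ell'}$ replaced by $\zeta_r$, the Poincar\'e inequality \eqref{poincare} applied on $E_r$, and uniform ellipticity and boundedness of $A$) produces constants $C\geq0$ and $\alpha\in(0,1)$ depending only on $A,\omega,p$ such that $\int_{\{x_1>r\}}|\nabla\tilde u_\ell^+|^p\le\alpha\int_{\{x_1>r-1\}}|\nabla\tilde u_\ell^+|^p$ for all such $r$ and all $\ell$ large; the uniform spectral gap needed here is exactly $\tilde\lambda_M^1(\Omega_\ell^+)\to\nu_\infty^+<\mu^1(\omega)$ against the Dirichlet eigenvalue tending to $\mu^1(\omega)$. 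Iterating, and using $\int_{\Omega_\ell^+}|\nabla\tilde u_\ell^+|^p\le\lambda^{-p/2}\tilde\lambda_M^1(\Omega_\ell^+)\le\lambda^{-p/2}\mu^1(\omega)$ together with \eqref{poincare}, gives $\int_{\{x_1>r\}\cap\Omega_\ell^+}(|\nabla\tilde u_\ell^+|^p+|\tilde u_\ell^+|^p)\le C\alpha^{r}$ uniformly in $\ell$. Letting $\ell\to\infty$ and using strong $L^p$ convergence on each bounded slab $\{x_1<r\}$, we obtain $\int_{\{x_1\le r\}}|\tilde u^+|^p\ge 1-C\alpha^{r}$ for every $r$, hence $\int_{\Omega_\infty^+}|\tilde u^+|^p=1$.

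With this in hand the existence and the remaining properties follow by standard arguments. Since $v\mapsto\big(\int_{\Omega_\infty^+}|A\nabla v\cdot\nabla v|^{p/2}\big)^{1/p}=\lVert A^{1/2}\nabla v\rVert_{L^p}$ is a seminorm on $W^{1,p}(\Omega_\infty^+)$ (triangle inequality by Minkowski, finiteness by \eqref{uniform bound}), it is weakly lower semicontinuous, so $\int_{\Omega_\infty^+}|A\nabla\tilde u^+\cdot\nabla\tilde u^+|^{p/2}\le\liminf_\ell\tilde\lambda_M^1(\Omega_\ell^+)=\nu_\infty^+$; as $\int|\tilde u^+|^p=1$, the infimum in \eqref{nu infinity} is attained by $\tilde u^+$. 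Differentiating $t\mapsto\big(\int|A\nabla(\tilde u^++t\phi)\cdot\nabla(\tilde u^++t\phi)|^{p/2}\big)\big/\big(\int|\tilde u^++t\phi|^p\big)$ at $t=0$ for arbitrary $\phi\in V(\Omega_\infty^+)$ yields $\int_{\Omega_\infty^+}|A\nabla\tilde u^+\cdot\nabla\tilde u^+|^{(p-2)/2}A\nabla\tilde u^+\cdot\nabla\phi=\nu_\infty^+\int_{\Omega_\infty^+}|\tilde u^+|^{p-2}\tilde u^+\phi$, which is the weak form of \eqref{5.2}, the condition $(A\nabla\tilde u^+)\cdot\nu=0$ on $\{0\}\times\omega$ arising as the natural boundary condition. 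Replacing $\tilde u^+$ by $|\tilde u^+|$ changes nothing (since $A\nabla|v|\cdot\nabla|v|=A\nabla v\cdot\nabla v$ a.e.), so we may take $\tilde u^+\ge0$; being a nontrivial nonnegative weak solution of a uniformly elliptic quasilinear equation with bounded measurable coefficients, it is strictly positive in $\Omega_\infty^+$ by the Harnack inequality, and hence every minimiser has constant sign.

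Finally, uniqueness up to a multiplicative constant follows from the hidden-convexity (Picone-type) argument that yields simplicity of the first eigenvalue of such operators, cf. \cite{AnLe,Peter}: given two positive minimisers $\tilde u,\tilde v$, the exponential decay established above (which again rests on \eqref{5.1}) guarantees that $\tilde u/\tilde v$ and $\tilde v/\tilde u$ are bounded, that all the integrals occurring in the argument converge, and that the boundary contributions at $x_1=+\infty$ vanish, so one concludes $\nabla(\tilde u/\tilde v)=0$ a.e. and hence $\tilde u$ and $\tilde v$ are proportional. I expect the decay estimate of the second paragraph to be the crux of the proof: it is the only place the hypothesis $\nu_\infty^+<\mu^1(\omega)$ is used, and it is indispensable — when $\nu_\infty^+=\mu^1(\omega)$, as in Theorem \ref{theorem gap holds for mu}(ii), a minimising sequence can slide off to $x_1=+\infty$ and no minimiser exists.
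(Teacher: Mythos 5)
Your proposal follows essentially the same route as the paper: extract a weak limit $\tilde u^+$ from the extended finite-cylinder minimizers $\tilde u_\ell^+$, and then establish $\int_{\Omega_\infty^+}|\tilde u^+|^p=1$ by an exponential decay estimate obtained from the spectral gap $\lambda_D^1(\Omega_{\ell/2})-\tilde\lambda_M^1(\Omega_\ell^+)\to\mu^1(\omega)-\nu_\infty^+>0$ (exactly where \eqref{5.1} enters), after which the Euler--Lagrange equation, constant sign, and uniqueness follow by standard arguments. The only cosmetic difference is that you invoke weak lower semicontinuity of the convex functional $v\mapsto\int|A\nabla v\cdot\nabla v|^{p/2}$ where the paper invokes Fatou's lemma, and you spell out the Harnack/Picone steps that the paper dispatches as ``a standard fact''; these do not change the substance.
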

\begin{proof}
For any $\ell>0$ the function $\tilde{u}_\ell ^+$ (defined in Remark \ref{remark 2}) can be extended by zero to a function in $W^{1,p}(\Omega_{\infty}^+)$. Since the sequence $\{\tilde{u}_\ell ^+\}$ is bounded in $W^{1,p}(\Omega_{\infty}^+)$, there is a subsequence $\{\tilde{u}_{\ell _k}^+\}$ which converges weakly to a limit $\tilde{u}^+\in W^{1,p}(\Omega_{\infty}^+)$. Definition of $\tilde{\lambda}_M^1(\Omega_{\ell _k}^+)$ gives the following identity:
\begin{equation}\label{5.3}
    \int_{\Omega_{\infty}^+}|A\nabla \tilde{u}_{\ell _k}^+\cdot \nabla \tilde{u}_{\ell _k}^+|^{\frac{p}{2}}=\tilde{\lambda}_M^1(\Omega_{\ell _k}^+)\int_{\Omega_{\infty}^+}|\tilde{u}_{\ell _k}^+|^p.
\end{equation}
We claim that
\begin{equation}\label{5.4}
    \int_{\Omega_{\infty}^+}|A\nabla \tilde{u}^+\cdot \nabla \tilde{u}^+|^{\frac{p}{2}}=\nu_{\infty}^+
    \int_{\Omega_{\infty}^+}|\tilde{u}^+|^p.
\end{equation}
Clearly $\tilde{u}^+\in V(\Omega_{\infty}^+)$. From \eqref{nu infinity} we have
\begin{equation}\label{5.5}
\int_{\Omega_{\infty}^+}|A\nabla \tilde{u}^+\cdot \nabla \tilde{u}^+|^{\frac{p}{2}}\geq\nu_{\infty}^+
    \int_{\Omega_{\infty}^+}|\tilde{u}^+|^p.
\end{equation}
By Fatou's lemma and by \eqref{5.3}, we get
\begin{equation}\label{5.6}
    \int_{\Omega_{\infty}^+}|A\nabla \tilde{u}^+\cdot \nabla \tilde{u}^+|^{\frac{p}{2}}\leq \liminf_{k\rightarrow\infty} \int_{\Omega_{\infty}^+}|A\nabla \tilde{u}_{\ell _k}^+\cdot \nabla \tilde{u}_{\ell _k}^+|^{\frac{p}{2}}=\nu_{\infty}^+ \left(\liminf_{k\rightarrow\infty}\int_{\Omega_{\infty}^+}|\tilde{u}_{\ell _k}^+|^p\right)=\nu_{\infty}^+.
\end{equation}
To prove our claim, it is enough to show that $\int_{\Omega_{\infty}^+}|\tilde{u}^+|^p=1$. In addition we will show that $\tilde{u}_\ell ^+$ decays to zero for large $x_1$, and that implies concentration near $x_1=0$, using the same technique as in the proof of Theorem \ref{theorem decay}.\smallskip

Let $\ell $ and $\ell '$ be such that $0<\ell '\leq \ell -1$. Define $\tilde{\rho}_{\ell '}=\tilde{\rho}_{\ell '}(x_1)$ by

$$\tilde{\rho}_{\ell '}(x_1)=
\begin{cases}
0 
   & \text{$x_1\leq \ell '$}\\
x_1-\ell ' 
   & \text{$x_1\in (\ell ',\ell '+1)$}\\
1 
   & \text{$x_1\geq \ell '+1.$}
\end{cases}$$
Clearly $\tilde{u}_\ell ^+\tilde{\rho}_{\ell '}$ satisfies Dirichlet boundary condition on $\partial\Omega_{\ell}^+.$ Translating this function in $\Omega_{\ell /2}$ and using definition of $\lambda_D^1(\Omega_{\ell/2})$ and $\tilde{\lambda}_M^1(\Omega_\ell ^+)$ respectively, we get
$$\lambda_D^1(\Omega_{\ell /2})\int_{\Omega_\ell ^+}|\tilde{u}_\ell ^+|^p\tilde{\rho}_{\ell '}^p
\leq\int_{\Omega_\ell ^+}|A\nabla(\tilde{u}_\ell ^+\tilde{\rho}_{\ell '})\cdot \nabla(\tilde{u}_\ell ^+\tilde{\rho}_{\ell '})|^{\frac{p}{2}}$$
and
$$\int_{\Omega_\ell ^+}|A\nabla \tilde{u}_\ell ^+\cdot \nabla \tilde{u}_\ell ^+|^{\frac{p-2}{2}}A\nabla \tilde{u}_\ell ^+\cdot \nabla(\tilde{\rho}_{\ell '}^p\tilde{u}_\ell ^+)=\tilde{\lambda}_M^1(\Omega_\ell ^+)\int_{\Omega_\ell ^+}|\tilde{u}_\ell ^+|^p\tilde{\rho}_{\ell '}^p.$$
Let $D_{\ell '}^+=\Omega_{\ell '+1}^+\setminus \Omega_{\ell '}^+$. An analogous calculation to derive \eqref{3.10} in the proof of Theorem \ref{theorem decay} gives
\begin{equation}\label{5.7}
\left(\lambda_D^1(\Omega_{\ell /2})-\tilde{\lambda}_M^1(\Omega_\ell ^+)\right)\int_{\Omega_\ell ^+}|\tilde{u}_\ell ^+|^p\tilde{\rho}_{\ell '}^p\leq p\lVert A \rVert_{\infty}^{\frac{p}{2}} C'\int_{D_{\ell '}^+}|\nabla\tilde{u}_\ell ^+|^p.
\end{equation}
By \eqref{result firoj 1}, \eqref{4.7} and the assumption \eqref{5.1}, there exists a $\tilde{\beta}'>0$ such that for $\ell >\ell_0$ we have  $\lambda_D^1(\Omega_{\ell /2})-\tilde{\lambda}_M^1(\Omega_\ell ^+)\geq \tilde{\beta}'$, which implies
\begin{equation}\label{5.8}
\tilde{\beta}'\int_{\Omega_\ell ^+}|\tilde{u}_\ell ^+|^p\tilde{\rho}_{\ell '}^p\leq p\lVert A \rVert_{\infty}^{\frac{p}{2}} C'\int_{D_{\ell '}^+}|\nabla\tilde{u}_\ell ^+|^p.
\end{equation}
Again by an analogous computation as in derivation of the \eqref{3.13} in the proof of  Theorem \ref{theorem decay}, and by the properties of $\tilde{\rho}_{\ell '}$, we obtain
$$\tilde{\beta}'\lambda^{\frac{p}{2}}\int_{\Omega_\ell ^+\setminus \Omega_{\ell '+1}^+}|\nabla\tilde{u}_\ell ^+|^p\leq p\lVert A \rVert_{\infty}^{\frac{p}{2}}\left(\lambda_M^1(\Omega_\ell )C'+\tilde{\beta}'C_p\right)\int_{D_{\ell '}^+}|\nabla\tilde{u}_\ell ^+|^p,$$
which gives
\begin{equation}\label{5.9}
\tilde{\beta}\int_{\Omega_\ell ^+\setminus \Omega_{\ell '+1}^+}|\nabla\tilde{u}_\ell ^+|^p\leq \tilde{C}\int_{\Omega_{\ell '+1}^+\setminus \Omega_{\ell '}^+}|\nabla\tilde{u}_\ell ^+|^p,
\end{equation}
where $\tilde{\beta}=\tilde{\beta}'\lambda^{\frac{p}{2}}$ and $\tilde{C}=p\lVert A \rVert_{\infty}^{\frac{p}{2}}\left(\mu^1({\omega})C'+\tilde{\beta}'C_p\right)$. From \eqref{5.9} we get
$$(\tilde{\beta}+\tilde{C})\int_{\Omega_\ell ^+\setminus \Omega_{\ell '+1}^+}|\nabla\tilde{u}_\ell ^+|^p\leq \tilde{C}\int_{\Omega_{\ell}^+\setminus \Omega_{\ell '}^+}|\nabla\tilde{u}_\ell ^+|^p,$$
which implies
\begin{equation}\label{5.10}
\int_{\Omega_\ell ^+\setminus \Omega_{\ell '+1}^+}|\nabla\tilde{u}_\ell ^+|^p\leq \tilde{\alpha}\int_{\Omega_{\ell }^+\setminus \Omega_{\ell '}^+}|\nabla\tilde{u}_\ell ^+|^p,
\end{equation}
where $\tilde{\alpha}=\frac{\tilde{C}}{\tilde{\beta}+\tilde{C}}<1$. Fix $r>1$. Applying this procedure successively for $\ell '=r-1, r-2,\cdots, r-[r]$ and then using \eqref{uniform ellip}, we obtain
\begin{multline}\label{5.11}
    \int_{\Omega_\ell ^+\setminus \Omega_{r}^+}|\nabla\tilde{u}_\ell ^+|^p\leq \tilde{\alpha}^{[r]}\int_{\Omega_{\ell }^+}|\nabla\tilde{u}_\ell ^+|^p\\
\leq \frac{\tilde{\alpha}^{[r]}}{\lambda^{\frac{p}{2}}}\int_{\Omega_\ell ^+}|A\nabla \tilde{u}_\ell ^+\cdot \nabla \tilde{u}_\ell ^+|^{\frac{p}{2}}=\frac{\tilde{\lambda}_M^1(\Omega_\ell ^+)}{\lambda^{\frac{p}{2}}}\tilde{\alpha}^{[r]}\leq \frac{\mu^1(\omega)}{\lambda^{\frac{p}{2}}}\tilde{\alpha}^{[r]}.
\end{multline}
By Poincar\'e inequality \eqref{poincare} and \eqref{5.11}, we have
$$\int_{\Omega_\ell ^+\setminus \Omega_{r}^+}|\tilde{u}_\ell ^+|^p\leq C_p^p\int_{\Omega_l^+\setminus \Omega_{r}^+}|\nabla\tilde{u}_\ell ^+|^p\leq \frac{\mu^1(\omega)C_p^p}{\lambda^{\frac{p}{2}}}\tilde{\alpha}^{[r]}=C\tilde{\alpha}^{[r]},\ \forall \ell >r,$$
which implies
\begin{equation}\label{5.12}
\int_{\Omega_{r}^+}|\tilde{u}_\ell ^+|^p\geq 1-C\tilde{\alpha}^{[r]}.
\end{equation}
Since $\tilde{u}_{\ell _k}^+\rightarrow \tilde{u}^+$ strongly in $L^p(\Omega^+_r)$, we deduce from \eqref{5.12} that
$$\int_{\Omega_{r}^+}|\tilde{u}^+|^p \geq 1-C\tilde{\alpha}^{[r]}.$$
Finally, limit $r\rightarrow\infty$ gives
\begin{equation}\label{5.13}
\int_{\Omega_{\infty}^+}|\tilde{u}^+|^p= 1.
\end{equation}
Since $\Tilde{u}^+\not\equiv0$, we can conclude that it is a minimizer realizing $\nu_{\infty}^+$ in \eqref{nu infinity}. It is a standard fact that the minimizer is unique upto multiplication by a constant and $\tilde{u}^+$ has constant sign follows from the fact that $\tilde{u}_{\ell _k}^+$ are non-negative.\smallskip

To show that $\Tilde{u}^+$ is a weak solution of \eqref{5.2}, for any fixed $\phi\in V(\Omega_{\infty}^+)$ and $t\in\mathbb{R}$ we define the function
$$J_{\phi}(t)=\frac{\int_{\Omega_{\infty}^+}|A\nabla (\tilde{u}^++t\phi)\cdot \nabla (\tilde{u}^++t\phi)|^{\frac{p}{2}}}{\int_{\Omega_{\infty}^+}|\tilde{u}
 ^++t\phi|^p}.$$
The derivative of $J_{\phi}$ with respect to $t$ is
\begin{multline*}
    J_{\phi}'(t)=\frac{\frac{p}{2}\int_{\Omega_{\infty}^+}|A\nabla (\tilde{u}^++t\phi)\cdot \nabla (\tilde{u}^++t\phi)|^{\frac{p-2}{2}}(2A\nabla\tilde{u}^+\cdot\nabla \phi+2tA\nabla \phi\cdot \nabla \phi)}{\int_{\Omega_{\infty}^+}|\tilde{u}^++t\phi|^p}\\
    -\frac{\frac{p}{2}\left(\int_{\Omega_{\infty}^+}|A\nabla (\tilde{u}^++t\phi)\cdot \nabla (\tilde{u}^++t\phi)|^{\frac{p}{2}}\right)\left(\int_{\Omega_{\infty}^+}|\tilde{u}^++t\phi|^{p-2}(2\tilde{u}^+\phi+2t\phi^2)\right)}{\left(\int_{\Omega_{\infty}^+}|\tilde{u}^++t\phi|^p\right)^2}.
\end{multline*}
Since $\Tilde{u}^+$ is a minimizer, we must have $J_{\phi}'(0)=0$. Substituting this and using \eqref{5.4} we get
\begin{equation}\label{5.14}
\int_{\Omega_{\infty}^+}|A\nabla \tilde{u}^+\cdot \nabla \tilde{u}^+|^{\frac{p-2}{2}}A\nabla \tilde{u}^+\cdot \nabla\phi=\nu_{\infty}^+
\int_{\Omega_{\infty}^+}|\tilde{u}^+|^{p-2}u\phi.
\end{equation}
This completes the proof.
\end{proof}

\begin{proof}[\textbf{Proof of Theorem \ref{theorem gap holds for mu}}]
(i) Assume that $A_{12}\cdot\nabla_{X_2}W\not\equiv0$ and \eqref{gap holds for mu} holds. For simplicity of notation, we use $\nabla W=\nabla_{X_2}W$ in this proof.  For any $\ell >0$ we define the set $T_\ell =\{x\in \omega: \mathrm{dist}(x,\partial\omega)\leq \ell \}$. Let $\beta\in (0,1)$ is fixed and $\rho_\ell $ be an approximation of the characteristic function of $\omega$, i.e. $\rho_\ell \rightarrow1$ pointwise as $\ell \rightarrow0$ and $\rho_\ell $ satisfies the following properties:
\begin{equation}\label{1.3 1}
\rho_\ell \in C_c^{\infty}(\omega),\ 0\leq\rho_\ell \leq1,\ \rho_\ell =1\ \mathrm{in} \ \omega\setminus T_\ell ,\ |\nabla\rho_\ell |\leq \frac{1}{\ell ^{\beta}}\ \mathrm{in}\ T_\ell.
\end{equation}
Define the function
$$v_\ell ^{\epsilon}(x_1,X_2)=W(X_2)-x_1\rho_\ell (X_2)G_{\epsilon}(X_2)$$
on $\Omega_\ell $, where $\{G_{\epsilon}\}\subset C_c^{\infty}(\omega)$ satisfies

$$\lim_{\epsilon\rightarrow0}G_{\epsilon}(X_2)=\frac{A_{12}(X_2)\cdot\nabla W(X_2)}{a_{11}(X_2)}\quad  \textrm{in} \ L^p(\omega).$$
We claim that
\begin{equation}\label{1.3 2}
    \inf_{\epsilon>0}\lim_{\ell \rightarrow0}\frac{\int_{\Omega_\ell ^-}|A\nabla v_\ell ^{\epsilon}\cdot \nabla v_\ell ^{\epsilon}|^{\frac{p}{2}}}{\int_{\Omega_\ell ^-}|v_\ell ^{\epsilon}|^p}\leq \int_{\omega}\left(A_{22}\nabla W\cdot\nabla W -\frac{A_{12}\cdot\nabla W}{a_{11}}\right)^{\frac{p}{2}}.
\end{equation}
 Since $$a_{11}G_{\epsilon}^2-2(A_{12}\cdot\nabla W)G_{\epsilon}+A_{22}\nabla W\cdot\nabla W\rightarrow A_{22}\nabla W\cdot\nabla W -\frac{A_{12}\cdot\nabla W}{a_{11}}$$ in $L^p(\omega)$ as $\epsilon\rightarrow0$, and L.H.S is always positive by uniform ellipticity of the matrix $A$, the function on the R.H.S of \eqref{1.3 2} is always non-negative.
The following inequality holds for any two vectors $a,$ $b$ and $q\geq1$ :
$$|b|^q\geq |a|^q+q\langle|a|^{q-2}a,b-a\rangle.$$
Using this we obtain
\begin{multline}\label{1.3 3}
 \int_{\Omega_\ell ^-}|v_\ell ^{\epsilon}|^p=\int_{\Omega_\ell ^-}|W(X_2)-x_1\rho_\ell (X_2)G_{\epsilon}(X_2)|^p\\
 \geq \int_{\Omega_\ell ^-}|W(X_2)|^p-p\int_{\Omega_\ell ^-}|W(X_2)|^{p-1}x_1\rho_\ell (X_2)G_{\epsilon}(X_2) =\ell+p\frac{\ell ^2}{2}\int_{\omega}|W|^{p-1}\rho_\ell G_{\epsilon}.
\end{multline}
Now
\begin{multline}\label{1.3 4}
 \int_{\Omega_\ell ^-}|A\nabla v_\ell^{\epsilon}\cdot \nabla v_\ell ^{\epsilon}|^{\frac{p}{2}}=\int_{\Omega_\ell ^-}|a_{11}(\partial_{x_1}v_\ell^{\epsilon})^2+2(A_{12}\cdot \nabla_{X_2}v_\ell^{\epsilon})\partial_{x_1}v_\ell^{\epsilon}+A_{22}\nabla_{X_2}v_\ell^{\epsilon}\cdot \nabla_{X_2}v_\ell^{\epsilon}|^{\frac{p}{2}}\\
 =\int_{\Omega_\ell ^-}|a_{11}\rho_\ell^2G_{\epsilon}^2-2\rho_\ell G_{\epsilon}(A_{12}\cdot \nabla W-x_1(G_{\epsilon}A_{12}\cdot \nabla \rho_\ell+\rho_\ell A_{12}\cdot \nabla G_{\epsilon}))\\
 +\left(A_{22} \nabla W-x_1(G_{\epsilon}A_{22} \nabla \rho_\ell+\rho_\ell A_{22} \nabla G_{\epsilon})\right)\cdot \left(\nabla W-x_1(G_{\epsilon} \nabla \rho_\ell+\rho_\ell \nabla G_{\epsilon})\right)|^{\frac{p}{2}}.
\end{multline}
Applying Minkowski's inequality in \eqref{1.3 4}, we get
\begin{equation}\label{1.3 5}
\int_{\Omega_\ell ^-}|A\nabla v_\ell^{\epsilon}\cdot \nabla v_\ell ^{\epsilon}|^{\frac{p}{2}}\leq ({I_1^{\epsilon}}^{\frac{2}{p}}+{I_2^{\epsilon}}^{\frac{2}{p}}+{I_3^{\epsilon}}^{\frac{2}{p}})^{\frac{p}{2}},
\end{equation}
where the integrals
\begin{equation}\label{1.3 6}
I_1^{\epsilon}=\int_{\Omega_\ell ^-}|a_{11}G_{\epsilon}^2-2(A_{12}\cdot \nabla W) G_{\epsilon}+A_{22} \nabla W\cdot \nabla W|^{\frac{p}{2}}=\ell\int_{\omega}|a_{11}G_{\epsilon}^2-2(A_{12}\cdot \nabla W) G_{\epsilon}+A_{22} \nabla W\cdot \nabla W|^{\frac{p}{2}},
\end{equation}
\begin{equation}\label{1.3 7}
I_2^{\epsilon}=\int_{\Omega_\ell^-}|a_{11}G_{\epsilon}^2(\rho_\ell^2-1)+2(A_{12}\cdot \nabla W) G_{\epsilon}(1-\rho_\ell)|^{\frac{p}{2}}=\ell K_\ell^{\epsilon},
\end{equation}
where
$$K_\ell^{\epsilon}=\int_{\omega}|a_{11}G_{\epsilon}^2(\rho_\ell^2-1)+2(A_{12}\cdot \nabla W) G_{\epsilon}(1-\rho_\ell)|^{\frac{p}{2}}$$
and
\begin{equation}\label{1.3 8}
I_3^{\epsilon}=\int_{\Omega_\ell ^-}|2x_1 H_{\ell}^{\epsilon}(X_2)+x_1^2 F_{\ell}^{\epsilon}(X_2)|^{\frac{p}{2}},
\end{equation}
where
$$H_{\ell}^{\epsilon}(X_2)=\rho_\ell G_{\epsilon}^2(A_{12}\cdot \nabla \rho_\ell)+\rho_\ell^2 G_{\epsilon}(A_{12}\cdot \nabla G_{\epsilon})-G_{\epsilon}(A_{22} \nabla W\cdot \nabla \rho_\ell)-\rho_\ell(A_{22} \nabla W\cdot \nabla G_{\epsilon}) $$
and
$$F_{\ell}^{\epsilon}(X_2)=(G_{\epsilon}A_{22} \nabla \rho_\ell+\rho_\ell A_{22} \nabla G_{\epsilon})\cdot(G_{\epsilon} \nabla \rho_\ell+\rho_\ell \nabla G_{\epsilon}).$$
Since $\rho_\ell \rightarrow1$ pointwise as $\ell \rightarrow0$, $K_\ell^{\epsilon}$ converges to $0$ as $\ell\rightarrow0$ by dominated convergence theorem. For any fix $\epsilon>0$, \eqref{1.3 1} gives
\begin{equation}\label{1.3 9}
    |H_{\ell}^{\epsilon}|\leq C_1+\frac{C_2}{\ell^{\beta}}\ \mathrm{and}\ \ |F_{\ell}^{\epsilon}|\leq C_3+\frac{C_4}{\ell^{2\beta}},
\end{equation}
where the constants $C_i>0$ ($1\leq i\leq 4$) are independent of $\ell$.
Applying Minkowski's inequality in \eqref{1.3 8}, we obtain
\begin{multline*}
I_3^{\epsilon}\leq \left(\left(\int_{\Omega_\ell ^-}|2x_1 H_{\ell}^{\epsilon}(X_2)|^{\frac{p}{2}}\right)^{\frac{2}{p}}+\left(\int_{\Omega_\ell ^-}|x_1^2 F_{\ell}^{\epsilon}(X_2)|^{\frac{p}{2}}\right)^{\frac{2}{p}}\right)^{\frac{p}{2}}\\
\leq\left(2\ell\left(\int_{\Omega_\ell ^-}|H_{\ell}^{\epsilon}(X_2)|^{\frac{p}{2}}\right)^{\frac{2}{p}}+\ell^2\left(\int_{\Omega_\ell ^-}|F_{\ell}^{\epsilon}(X_2)|^{\frac{p}{2}}\right)^{\frac{2}{p}}\right)^{\frac{p}{2}}\\
=\ell\left(2\ell\left(\int_{\omega}|H_{\ell}^{\epsilon}(X_2)|^{\frac{p}{2}}\right)^{\frac{2}{p}}+\ell^2\left(\int_{\omega}|F_{\ell}^{\epsilon}(X_2)|^{\frac{p}{2}}\right)^{\frac{2}{p}}\right)^{\frac{p}{2}}
\end{multline*}
By \eqref{1.3 9}, we get
\begin{equation}\label{1.3 10}
I_3^{\epsilon}\leq \ell\left[C_1\ell+C_2\ell^{1-\beta}+C_3\ell^2+C_4\ell^{2-2\beta}\right]^{\frac{p}{2}}=\ell C(\ell)^{\frac{p}{2}},
\end{equation}
where $C(\ell)\rightarrow0$ as $\ell\rightarrow0$. Using the estimates \eqref{1.3 6}, \eqref{1.3 7} and
\eqref{1.3 10} in \eqref{1.3 5}, we obtain
\begin{equation}\label{1.3 11}
    \int_{\Omega_\ell ^-}|A\nabla v_\ell ^{\epsilon}\cdot \nabla v_\ell ^{\epsilon}|^{\frac{p}{2}}\leq \ell \left[\left(\int_{\omega}|a_{11}G_{\epsilon}^2-2(A_{12}\cdot\nabla W)G_{\epsilon}+A_{22}\nabla W\cdot\nabla W|^{\frac{p}{2}}\right)^{\frac{2}{p}}+(K_\ell ^{\epsilon})^{\frac{2}{p}}+C(\ell ) \right]^{\frac{p}{2}}.
\end{equation}
Hence we have by \eqref{1.3 3} and \eqref{1.3 11},
\begin{equation}\label{1.3 12}
\frac{\int_{\Omega_\ell ^-}|A\nabla v_\ell ^{\epsilon}\cdot \nabla v_\ell ^{\epsilon}|^{\frac{p}{2}}}{\int_{\Omega_\ell ^-}|v_\ell ^{\epsilon}|^p}\leq\frac{\left[\left(\int_{\omega}|a_{11}G_{\epsilon}^2-2(A_{12}\cdot\nabla W)G_{\epsilon}+A_{22}\nabla W\cdot\nabla W|^{\frac{p}{2}}\right)^{\frac{2}{p}}+(K_\ell ^{\epsilon})^{\frac{2}{p}}+C(\ell ) \right]^{\frac{p}{2}}}{1+\frac{p}{2}\ell\int_{\omega}|W|^{p-1}\rho_\ell G_{\epsilon}}
\end{equation}
Sending the limits $\ell \rightarrow0$ and $\epsilon\rightarrow0$ in \eqref{1.3 12}, we prove our claim. In particular we have
$$\inf_{\epsilon>0}\lim_{\ell \rightarrow0}\frac{\int_{\Omega_\ell ^-}|A\nabla v_\ell ^{\epsilon}\cdot \nabla v_\ell ^{\epsilon}|^{\frac{p}{2}}}{\int_{\Omega_\ell ^-}|v_\ell ^{\epsilon}|^p}
<\mu^1(\omega).$$
Hence we can find some $\ell _1$ and $\epsilon_1$ for which
\begin{equation}\label{1.3 13}
-\gamma_1=\int_{\Omega_{\ell_1} ^-}|A\nabla v_{\ell _1}^{\epsilon_1}\cdot \nabla v_{\ell _1}^{\epsilon_1}|^{\frac{p}{2}}-\mu^1(\omega)\int_{\Omega_{\ell_1}^-}|v_{\ell _1}^{\epsilon_1}|^p<0.
\end{equation}
For any $\alpha>0$ we define the following function in $V(\Omega_{\infty}^+)$:
$$z_{\alpha}(x_1,X_2)=
\begin{cases}v_{\ell _1}^{\epsilon_1}(x_1-\ell _1,X_2)
   & \text{$x_1\in [0,\ell _1)$}\\
W(X_2)e^{-\alpha(x_1-\ell _1)}
   & \text{$x_1\in [\ell _1,\infty)$}.
\end{cases}$$
We have
\begin{equation}\label{1.3 14}
\int_{\Omega_{\infty}^+}|z_{\alpha}|^p=\int_{\Omega_{\ell_1}^-}|v_{\ell _1}^{\epsilon_1}|^p+\Big(\int_0^{\infty}e^{-p\alpha x_1}\Big)\int_{\omega}|W|^p=\int_{\Omega_{\ell_1}^-}|v_{\ell _1}^{\epsilon_1}|^p+\frac{1}{p\alpha},
\end{equation}
and
\begin{multline}\label{1.3 15}
\int_{\Omega_{\infty}^+}|A\nabla z_{\alpha}\cdot\nabla z_{\alpha}|^{\frac{p}{2}}=\int_{\Omega_{\ell_1}^-}|A\nabla v_{\ell _1}^{\epsilon_1}\cdot \nabla v_{\ell _1}^{\epsilon_1}|^{\frac{p}{2}}\\
+\frac{1}{p\alpha}\int_{\omega}|\alpha^2a_{11}W^2-2\alpha(A_{12}\cdot\nabla W)W+A_{22}\nabla W\cdot\nabla W|^{\frac{p}{2}}.
\end{multline}
Using \eqref{nu infinity} and then \eqref{1.3 14} and \eqref{1.3 15}, we obtain
\begin{equation}\label{1.3 16}
\nu_{\infty}^+- \mu^1(\omega)\leq \frac{\int_{\Omega_{\ell_1}^-}|A\nabla v_{\ell _1}^{\epsilon_1}\cdot \nabla v_{\ell _1}^{\epsilon_1}|^{\frac{p}{2}}\\
+\frac{1}{p\alpha}\int_{\omega}|\alpha^2a_{11}W^2-2\alpha(A_{12}\cdot\nabla W)W+A_{22}\nabla W\cdot\nabla W|^{\frac{p}{2}}}{\int_{\Omega_{\ell_1}^-}|v_{\ell _1}^{\epsilon_1}|^p+\frac{1}{p\alpha}}- \mu^1(\omega).
\end{equation}
Substituting the value $\gamma_1$ defined in \eqref{1.3 13}, we get
\begin{equation}\label{1.3 17}
\nu_{\infty}^+- \mu^1(\omega) \leq\frac{-\gamma_1+\frac{1}{p\alpha}\int_{\omega}|\alpha^2a_{11}W^2-2\alpha(A_{12}\cdot\nabla W)W+A_{22}\nabla W\cdot\nabla W|^{\frac{p}{2}}-\frac{\mu^1(\omega)}{p\alpha}}{\int_{\Omega_{\ell_1}^-}|v_{\ell _1}^{\epsilon_1}|^p+\frac{1}{p\alpha}}.
\end{equation}
Now we approximate the integral
$$I_{\alpha}=\int_{\omega}|\alpha^2a_{11}W^2-2\alpha(A_{12}\cdot\nabla W)W+A_{22}\nabla W\cdot\nabla W|^{\frac{p}{2}}.$$
Using the inequality $(a+b)^q\leq a^q+qa^{q-1}b+Ca^{q-2}b^2$ where $|b|<a$, $q\geq1$ and $C$ is some positive constant, for small $\alpha,$ we have
\begin{multline}\label{1.3 18}
I_{\alpha}\leq\int_{\omega}|A_{22}\nabla W\cdot\nabla W|^{\frac{p}{2}}+\frac{p}{2}\int_{\omega}|A_{22}\nabla W\cdot\nabla W|^{\frac{p-2}{2}}\left(\alpha^2a_{11}W^2-2\alpha(A_{12}\cdot\nabla W)W\right)\\
+C\int_{\omega}|A_{22}\nabla W\cdot\nabla W|^{\frac{p-4}{2}}\left(\alpha^2a_{11}W^2-2\alpha(A_{12}\cdot\nabla W)W\right)^2\\
=\mu^1(\omega)+\alpha^2\frac{p}{2}a_{11}I_1-\alpha pI_2+\alpha^2 CI_3,
\end{multline}

where
$$I_1=\int_{\omega}|A_{22}\nabla W\cdot\nabla W|^{\frac{p-2}{2}}W^2,$$
$$I_2=\int_{\omega}|A_{22}\nabla W\cdot\nabla W|^{\frac{p-2}{2}}(A_{12}\cdot\nabla W)W$$ and
$$I_3=\int_{\omega}|A_{22}\nabla W\cdot\nabla W|^{\frac{p-4}{2}}\left(\alpha a_{11}W^2-2(A_{12}\cdot\nabla W)W\right)^2.$$
Using \eqref{1.3 18} in \eqref{1.3 17}, we get
\begin{equation}\label{1.3 19}
\nu_{\infty}^+- \mu^1(\omega)\leq\frac{-\gamma_1+\frac{\alpha}{2} a_{11}I_1- I_2+\frac{\alpha}{p}CI_3}{\int_{\Omega_{\ell_1}^-}|v_{\ell _1}^{\epsilon_1}|^p+\frac{1}{p\alpha}}.
\end{equation}
Since $\gamma_1>0$, and $I_2\geq0$ by \eqref{gap holds for mu}, choose $\alpha$ small enough so that RHS becomes negative and this completes the proof.\smallskip

(ii) It is known that the space $V_s(\Omega_{\infty}^+)$ defined in \eqref{Vs space} is dense in $V(\Omega_{\infty}^+)$. The following subspace
$$V_s^0(\Omega_{\infty}^+)=\{u\in V_s(\Omega_{\infty}^+):\exists\delta=\delta(u)>0\ \mathrm{s.t} \ u(x)=0\ \mathrm{for} \ x \ \mathrm{with} \ \mathrm{dist}(x,\gamma_{\infty}^+)\leq \delta\}$$
is also dense in $V(\Omega_{\infty}^+)$. By regularity of the eigenfunction, we know that $W$ is continuous and positive in $\omega$ (see \cite{AnLe}). Picone's identity says that for $u\geq0$ and $v>0$, we have
\begin{equation}\label{picon}
R(u,v)=|A\nabla u\cdot\nabla u|^{\frac{p}{2}}-|A\nabla v\cdot\nabla v|^{\frac{p-2}{2}}A\nabla v\cdot\nabla\left(\frac{u^p}{v^{p-1}}\right)\geq0.
\end{equation}
Let $ u\in V_s^0(\Omega_{\infty}^+)$ be such that $u\geq0$ and let $v=W$. With these $u$ and $v$, integrating \eqref{picon} over $\Omega_{\infty}^+$ and applying integration by parts, we get
\begin{multline}\label{1.3 20}
0\leq \int_{\Omega_{\infty}^+}|A\nabla u\cdot\nabla u|^{\frac{p}{2}}-\int_{\Omega_{\infty}^+}|A\nabla W\cdot\nabla W|^{\frac{p-2}{2}}A\nabla W\cdot\nabla\left(\frac{u^p}{W^{p-1}}\right)\\
=\int_{\Omega_{\infty}^+}|A\nabla u\cdot\nabla u|^{\frac{p}{2}}+\int_{\Omega_{\infty}^+}\mathrm{div}\left(|A\nabla W\cdot\nabla W|^{\frac{p-2}{2}}A\nabla W\right)\frac{u^p}{W^{p-1}}\\
-\int_{\{0\}\times\omega}\left(|A\nabla W\cdot\nabla W|^{\frac{p-2}{2}}A\nabla W\cdot\nu\right)\frac{u^p}{W^{p-1}}\\
=\int_{\Omega_{\infty}^+}|A\nabla u\cdot\nabla u|^{\frac{p}{2}}-\mu^1(\omega)\int_{\Omega_{\infty}^+}u^p+\int_{\omega}|A_{22}\nabla W\cdot\nabla W|^{\frac{p-2}{2}}\left(A_{12}\cdot\nabla W\right)\frac{u^p(0,X_2)}{W^{p-1}(0,X_2)}.
\end{multline}
Here we have used $-\mathrm{div}(|A\nabla W\cdot\nabla W|^{\frac{p-2}{2}}A\nabla W)=\mu^1(\omega)W^{p-1}$, which is true in view of \eqref{cross}. The last integral is less than or equal to $0$ by the assumption \eqref{no gap for mu},  we have
\begin{equation}\label{1.3 21}
    0\leq \int_{\Omega_{\infty}^+}|A\nabla u\cdot\nabla u|^{\frac{p}{2}}-\mu^1(\omega)\int_{\Omega_{\infty}^+}u^p.
\end{equation}
Now let $u\in V_s^0(\Omega_{\infty}^+)$ and $u=u^+-u^-$, where $u^+(x)=$max$\{u(x),0\}$ and $u^-(x)=$min$\{0,u(x)\}$. Clearly $0\leq u^{\pm}\in V_s^0(\Omega_{\infty}^+)$ and $|u|=u^++u^-$.
Since $u^+$ and $u^-$ have disjoint supports, it is easy to check that
\begin{equation}\label{1.3 22}
\int_{\Omega_{\infty}^+}|A\nabla u\cdot\nabla u|^{\frac{p}{2}}=\int_{\Omega_{\infty}^+}|A\nabla u^+\cdot\nabla u^+|^{\frac{p}{2}}+\int_{\Omega_{\infty}^+}|A\nabla u^-\cdot\nabla u^-|^{\frac{p}{2}}
\end{equation}
and
\begin{equation}\label{1.3 23}
\int_{\Omega_{\infty}^+}|u|^p=\int_{\Omega_{\infty}^+}\left(u^+\right)^p+\int_{\Omega_{\infty}^+}\left(u^-\right)^p.
\end{equation}
Hence from \eqref{1.3 21}, \eqref{1.3 22} and \eqref{1.3 23} we obtain, for every $u\in V_s^0(\Omega_{\infty}^+)$
\begin{equation}\label{1.3 24}
\int_{\Omega_{\infty}^+}|A\nabla u\cdot\nabla u|^{\frac{p}{2}} \geq\mu^1(\omega)\int_{\Omega_{\infty}^+}u^p,\ \mathrm{i.e.}\ \ \frac{\int_{\Omega_{\infty}^+}|A\nabla u\cdot\nabla u|^{\frac{p}{2}}}{\int_{\Omega_{\infty}^+}u^p}\geq\mu^1(\omega).
\end{equation}
 By the density of the $V_s^0(\Omega_{\infty}^+)$ in $V(\Omega_{\infty}^+)$, inequality \eqref{1.3 24} holds for every $u\in V(\Omega_{\infty}^+)$  and \eqref{nu infinity} gives $\nu_{\infty}^+\geq \mu^1(\omega)$. The reverse inequality follows from Lemma \ref{lemma 4.1} and we conclude $\nu_{\infty}^+= \mu^1(\omega)$.\smallskip

Now, if $u$ be a minimizer realizing $\nu_{\infty}^+$, from \eqref{1.3 20} we must have 
$$\int_{\Omega_{\infty}^+}|A\nabla u\cdot\nabla u|^{\frac{p}{2}}=\int_{\Omega_{\infty}^+}|A\nabla W\cdot\nabla W|^{\frac{p-2}{2}}A\nabla W\cdot\nabla\left(\frac{u^p}{W^{p-1}}\right).$$
This is true only when $u=cW$ for some constant $c$. But this is a contradiction since
$W\not\in V(\Omega_{\infty}^+)$. This completes the proof.
\end{proof}
\begin{remark}\label{remark 3}
It is clear from Remark \ref{remark 1} that Theorem \ref{theorem gap holds for mu} (i) and (ii) follows for $\nu_{\infty}^-$ if we replace the inequalities \eqref{gap holds for mu} and \eqref{no gap for mu} by $\int_{\omega}|A_{22}\nabla W\cdot\nabla W|^{\frac{p-2}{2}}(A_{12}\cdot\nabla W)W\leq0$ and $A_{12}\cdot\nabla W\geq0$
respectively. In particular, if $A_{12}\cdot\nabla W\not\equiv0$ and $\int_{\omega}|A_{22}\nabla W\cdot\nabla W|^{\frac{p-2}{2}}(A_{12}\cdot\nabla W)W=0$, then $\nu_{\infty}^+< \mu^1(\omega)$ and $\nu_{\infty}^-< \mu^1(\omega)$. Hence, if $A$ satisfies the symmetry (S) defined in Definition \ref{definition S} and $A_{12}\cdot\nabla W\not\equiv0$, then it follows that $\nu_{\infty}^{\pm}< \mu^1(\omega)$. Another consequence of Theorem \ref{theorem gap holds for mu} is that if $A_{12}\cdot\nabla W\not\equiv0$, at least one of the $\nu_{\infty}^+$ and $\nu_{\infty}^-$ is strictly less than $\mu^1(\omega)$, in particular $\min \{\nu_{\infty}^+,\nu_{\infty}^-\}< \mu^1(\omega)$, which already follows from Theorem \ref{theorem itai and firoj} (ii) and Theorem \ref{theorem limit of first eigenvalue}.
\end{remark}\smallskip

In the next theorem we describe the behaviour of the eigenfunctions $\{u_\ell\}$ of mixed boundary value problem \eqref{mix} at the ends of the cylinder. Here we have shown two possible cases may happen: concentration near one of the ends of the cylinder, or near both ends. Let $\tilde{u}^{\pm}$ be the unique positive normalized minimizer in \eqref{nu infinity}, whenever it exists. For each $\ell >0$ we define the functions
$$\tilde{v}_\ell ^+(x_1,X_2)=u_\ell (x_1-\ell ,X_2)\quad on\quad \Omega_\ell ^+,$$
$$\tilde{v}_\ell ^-(x_1,X_2)=u_\ell (x_1+\ell ,X_2)\quad on\quad \Omega_\ell ^-.$$
\begin{theorem}\label{theorem 5.2}
\emph{(i)} If $\nu_{\infty}^+<\nu_{\infty}^-$, then for every $r>0$,
$$\tilde{v}_\ell ^+\longrightarrow \tilde{u}^+\ \ in\ \ W^{1,p}(\Omega_r^+)\ \ and\ \ \tilde{v}_\ell ^-\longrightarrow0\ \ in\ \ W^{1,p}(\Omega_r^-).$$
\emph{(ii)} If $A$ satisfies the symmetry (S) defined in Definition \ref{definition S} and \eqref{5.2} holds, then we have\\
$\tilde{v}_\ell ^+(x_1,X_2)=\tilde{v}_\ell ^-(-x_1,-X_2)$ and for every $r>0$,
$$\tilde{v}_\ell ^+\longrightarrow \tilde{u}^+\ \ in\ \ W^{1,p}(\Omega_r^+)\ \ and\ \ \tilde{v}_\ell ^-\longrightarrow\tilde{u}^-\ \ in\ \ W^{1,p}(\Omega_r^-).$$
\end{theorem}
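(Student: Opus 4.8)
The plan is to treat both parts by a single compactness scheme applied to $\tilde v_\ell^+$, the difference being only how the mass of $u_\ell$ is located. First I would record that $\{u_\ell\}$ is bounded in $W^{1,p}(\Omega_\ell)$ uniformly in $\ell$: from $\int_{\Omega_\ell}|u_\ell|^p=1$, $\int_{\Omega_\ell}|A\nabla u_\ell\cdot\nabla u_\ell|^{p/2}=\lambda_M^1(\Omega_\ell)\le\mu^1(\omega)$ and \eqref{uniform ellip} we get a uniform bound on $\int_{\Omega_\ell}|\nabla u_\ell|^p$, hence for each fixed $r>0$ the translates $\tilde v_\ell^\pm$ are bounded in $W^{1,p}(\Omega_r^\pm)$, uniformly in $\ell>r$. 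In case (i), Proposition \ref{proposition 4.1} gives $D_\ell^+=\int_{\Omega_\ell^+}|u_\ell|^p\to 0$ and $\int_{\Omega_\ell^+}|\nabla u_\ell|^p\to 0$; since $\int_{\Omega_r^-}(|\tilde v_\ell^-|^p+|\nabla\tilde v_\ell^-|^p)\le\int_{\Omega_\ell^+}(|u_\ell|^p+|\nabla u_\ell|^p)$, this already proves $\tilde v_\ell^-\to 0$ in $W^{1,p}(\Omega_r^-)$, and $D_\ell^-\to 1$. In case (ii), simplicity of $\lambda_M^1(\Omega_\ell)$ together with the invariance of \eqref{mix} under $(x_1,X_2)\mapsto(-x_1,-X_2)$ afforded by (S) forces the normalized positive eigenfunction to satisfy $u_\ell(x_1,X_2)=u_\ell(-x_1,-X_2)$; hence $\tilde v_\ell^+(x_1,X_2)=\tilde v_\ell^-(-x_1,-X_2)$, $D_\ell^+=D_\ell^-=\tfrac12$, and by Remark \ref{remark 1}, $\nu_\infty^+=\nu_\infty^-$ and $\tilde u^-(x_1,X_2)=\tilde u^+(-x_1,-X_2)$, so again it suffices to analyse $\tilde v_\ell^+$. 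In both cases $\nu_\infty^+<\mu^1(\omega)$ (by hypothesis in (ii); because $\nu_\infty^+<\nu_\infty^-\le\mu^1(\omega)$ in (i)), so Proposition \ref{proposition 5.1} provides $\tilde u^+$, and Theorem \ref{theorem gap holds for mu}(ii) forces $A_{12}\cdot\nabla_{X_2}W\not\equiv 0$, so Theorem \ref{theorem decay} is available.

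\textbf{A uniform decay estimate for $\tilde v_\ell^+$.} The crucial quantitative input is that $\int_{\Omega_\ell^+\setminus\Omega_r^+}(|\tilde v_\ell^+|^p+|\nabla\tilde v_\ell^+|^p)\le C\alpha_1^{[r]}$ for some $\alpha_1\in(0,1)$, uniformly in $\ell$ — equivalently, $u_\ell$ concentrates near $x_1=-\ell$. In case (ii) this is immediate from Theorem \ref{theorem decay} with radius $\ell-r$: it gives $\int_{\Omega_{\ell-r}}(|u_\ell|^p+|\nabla u_\ell|^p)\le C\alpha^{[r]}$, so the two end--slabs $\Omega_\ell\setminus\Omega_{\ell-r}$ carry all but $C\alpha^{[r]}$ of the mass, and by $u_\ell(x)=u_\ell(-x)$ the left slab $(-\ell,-\ell+r)\times\omega$ alone has mass $\ge\tfrac12(1-C\alpha^{[r]})$ while its complement in $\Omega_\ell^-$ has mass and energy $\le C\alpha^{[r]}$. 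In case (i) I would prove the analogue $\int_{(-\ell+r,\ell)\times\omega}(|u_\ell|^p+|\nabla u_\ell|^p)\le C\alpha_1^{[r]}$ by repeating the Caccioppoli iteration of the proof of Theorem \ref{theorem decay} with a new comparison: take $\phi$ vanishing on $\Omega_\ell\setminus\big((-\ell+r-1,\ell)\times\omega\big)$, equal to $1$ on $(-\ell+r,\ell)\times\omega$, with $|\nabla\phi|\le 1$; then $\phi u_\ell$ is admissible in \eqref{4.4} for the subcylinder $(-\ell+r-1,\ell)\times\omega$ with Neumann face $\{\ell\}\times\omega$, whose first eigenvalue is a translate of $\tilde\lambda_M^1(\Omega_L^-)$ with $L=2\ell-r+1$, hence $\ge\nu_\infty^-$ by the monotonicity used in Lemma \ref{lemma 4.2}, while $\phi^p u_\ell\in V(\Omega_\ell)$ is a legitimate test function in \eqref{mix-weak}. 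Subtracting, exactly as in \eqref{3.1}--\eqref{3.13} (Poincaré, Lemma \ref{theorem poincare}, applied on the cylindrical ramp), gives $(\nu_\infty^--\lambda_M^1(\Omega_\ell))\int\phi^p|u_\ell|^p\le C\int_{(-\ell+r-1,-\ell+r)\times\omega}|\nabla u_\ell|^p$; since $\lambda_M^1(\Omega_\ell)\to\nu_\infty^+<\nu_\infty^-$ the prefactor is bounded below for large $\ell$, and together with the reverse estimate as in \eqref{3.12} one iterates in $r$ to get the geometric decay, uniformly in $\ell$. Translating, in either case $\int_{\Omega_r^+}|\tilde v_\ell^+|^p=D_\ell^--O(\alpha_1^{[r]})$ and $\int_{\Omega_r^+}|A\nabla\tilde v_\ell^+\cdot\nabla\tilde v_\ell^+|^{p/2}=N_\ell^--O(\alpha_1^{[r]})$, with $D_\ell^-\to m$, $N_\ell^-\to m\,\nu_\infty^+$, where $m=1$ in (i) and $m=\tfrac12$ in (ii).

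\textbf{Passage to the limit.} By reflexivity and a diagonal extraction over $r\in\mathbb N$, a subsequence satisfies $\tilde v_\ell^+\rightharpoonup v^+$ weakly in $W^{1,p}(\Omega_r^+)$ and, by Rellich, strongly in $L^p(\Omega_r^+)$ for every $r$; then $v^+\ge 0$, $v^+=0$ on $\gamma_\infty^+$, and the uniform energy bounds give $v^+\in V(\Omega_\infty^+)$. Passing $\ell\to\infty$ then $r\to\infty$ in the two displays above yields $\int_{\Omega_\infty^+}|v^+|^p=m$ and, by weak lower semicontinuity of the convex functional $u\mapsto\int(A\nabla u\cdot\nabla u)^{p/2}$, $\int_{\Omega_\infty^+}|A\nabla v^+\cdot\nabla v^+|^{p/2}\le m\,\nu_\infty^+$. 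Thus the Rayleigh quotient of $v^+$ is $\le\nu_\infty^+$, so by \eqref{nu infinity} it equals $\nu_\infty^+$ and $v^+$ is a minimizer; by Proposition \ref{proposition 5.1} (uniqueness up to a constant and constant sign) and $v^+\ge0$ nonzero, $v^+=m^{1/p}\tilde u^+$ — in particular $v^+=\tilde u^+$ in case (i). Since the limit is independent of the subsequence, the whole sequence converges to $m^{1/p}\tilde u^+$ weakly in $W^{1,p}(\Omega_r^+)$ and strongly in $L^p(\Omega_r^+)$, for every $r$.

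\textbf{Upgrade to strong convergence and conclusion.} To get strong $W^{1,p}(\Omega_r^+)$-convergence, first note convergence of the local energies: writing $N_\ell^-$ as the sum of the energies of $\tilde v_\ell^+$ over $\Omega_r^+$ and over $\Omega_\ell^+\setminus\Omega_r^+$, lower semicontinuity of each summand together with $N_\ell^-\to m\nu_\infty^+=\int_{\Omega_\infty^+}|A\nabla v^+\cdot\nabla v^+|^{p/2}$ forces equality in the $\liminf$ for both, so $\int_{\Omega_r^+}|A\nabla\tilde v_\ell^+\cdot\nabla\tilde v_\ell^+|^{p/2}\to\int_{\Omega_r^+}|A\nabla v^+\cdot\nabla v^+|^{p/2}$. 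Weak convergence plus this energy convergence, combined with the uniform convexity of $\xi\mapsto(A(X_2)\xi\cdot\xi)^{p/2}$ for $p\ge 2$ (substitute $A^{1/2}\xi$ in the pointwise Clarkson inequality and use \eqref{uniform ellip}), forces $\nabla\tilde v_\ell^+\to\nabla v^+$ in $L^p(\Omega_r^+)$; together with the strong $L^p$-convergence of $\tilde v_\ell^+$ this is the assertion. The statement for $\tilde v_\ell^-$ in case (ii) then follows from $\tilde v_\ell^-(x_1,X_2)=\tilde v_\ell^+(-x_1,-X_2)$ and $\tilde u^-(x_1,X_2)=\tilde u^+(-x_1,-X_2)$ by change of variables. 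The step I expect to require the real work is the case-(i) decay estimate, i.e.\ running the Caccioppoli iteration with the gap $\nu_\infty^--\nu_\infty^+>0$ (realized via the subcylinder eigenvalue) in place of the gap $\mu^1(\omega)-\limsup_\ell\lambda_M^1(\Omega_\ell)$ used in Theorem \ref{theorem decay}; the remaining steps are routine adaptations of Sections 3--5.
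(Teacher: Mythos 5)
Your proof follows the same overall scheme as the paper's (diagonal compactness, identification of the weak limit as the unique minimizer realizing $\nu_\infty^+$, upgrade to strong convergence via energy convergence and uniform convexity of $\xi\mapsto(A\xi\cdot\xi)^{p/2}$), and your handling of $\tilde v_\ell^-$ via Proposition \ref{proposition 4.1} in case (i) and the symmetry argument in case (ii) matches the paper. However, you have introduced an unnecessary and potentially fragile extra step based on a misreading of Theorem \ref{theorem decay}.

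You assert that the decay estimate
\[
\int_{\Omega_\ell^+\setminus\Omega_r^+}\bigl(|\tilde v_\ell^+|^p+|\nabla\tilde v_\ell^+|^p\bigr)\le C\alpha^{[r]}
\]
is available ``immediately'' only in case (ii), and that in case (i) you must prove a new analogue by re-running the Caccioppoli iteration with the gap $\nu_\infty^--\nu_\infty^+$. This is not so. After undoing the translation, $\Omega_\ell^+\setminus\Omega_r^+=(r,\ell)\times\omega$ is sent to $(r-\ell,0)\times\omega=\Omega_{\ell-r}^-\subset\Omega_{\ell-r}$, so
\[
\int_{\Omega_\ell^+\setminus\Omega_r^+}\bigl(|\tilde v_\ell^+|^p+|\nabla\tilde v_\ell^+|^p\bigr)=\int_{\Omega_{\ell-r}^-}\bigl(|u_\ell|^p+|\nabla u_\ell|^p\bigr)\le\int_{\Omega_{\ell-r}}\bigl(|u_\ell|^p+|\nabla u_\ell|^p\bigr)\le C\alpha^{[r]}
\]
directly from \eqref{decay of grad}--\eqref{decay of u} of Theorem \ref{theorem decay} (which applies since $\nu_\infty^+<\mu^1(\omega)$ forces $A_{12}\cdot\nabla_{X_2}W\not\equiv 0$, as you noted), with no reference to which end the mass prefers. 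Theorem \ref{theorem decay} controls the \emph{middle} of the centred cylinder, and $\Omega_\ell^+\setminus\Omega_r^+$ translated lies entirely in that middle; the symmetry in case (ii) plays no role here. The paper thus needs only Theorem \ref{theorem decay} (quantitative, symmetric) plus Proposition \ref{proposition 4.1} (non-quantitative, one-sided), which together give \eqref{5.40}; your proposed new Caccioppoli iteration with the comparison eigenvalue $\tilde\lambda_M^1$ of a sub-cylinder is not needed for the theorem as stated, and it carries nontrivial new details (admissibility of the cutoffs on a cylinder with one Neumann face, iterability with the $\nu_\infty^--\nu_\infty^+$ gap) that you only sketch. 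Dropping this step makes your argument coincide with the paper's, with your Clarkson-inequality justification of the strong-convergence upgrade being a useful spelling-out of what the paper leaves implicit after showing $f\equiv 0$.
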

\begin{proof}
(i) The convergence of $\{\tilde{v}_\ell ^-\}$ to $0$ in $W^{1,p}(\Omega_r^-)$ for any $r>0$ follows from Proposition \ref{proposition 4.1}. It remains to prove the convergence for $\{\tilde{v}_\ell ^+\}$. Let $\{\tilde{v}_{\ell_k}^+\}\subset\{\tilde{v}_\ell ^+\}$ be any sequence. Since $\{\tilde{v}_{\ell_k}^+\}$ is bounded in $W^{1,p}(\Omega_r^+)$ for any $r>0$, using a diagonal argument in $\{\tilde{v}_{\ell_k}^+\}$, we get a subsequence, we also denote it by $\{\tilde{v}_{\ell _k}^+\}$, which converges weakly in $W^{1,p}(\Omega_r^+)$ and strongly in $L^p(\Omega_r^+)$ to a function $v^+\in W^{1,p}(\Omega_{\infty}^+)$. By \eqref{decay of u} we have
\begin{equation}\label{5.40}
\int_{\Omega_r^+}|\tilde{v}_\ell ^+|^p=\int_{\Omega_\ell ^-\setminus\Omega_{\ell -r}}|u_\ell |^p=1-\int_{\Omega_{\ell -r}^-}|u_\ell |^p-\int_{\Omega_{\ell }+}|u_\ell |^p\geq1-C\alpha^{[r]}-\int_{\Omega_{\ell }+}|u_\ell |^p.
\end{equation}
Clearly the integral on the right hand side goes to $0$ as $\ell \rightarrow\infty$ by Proposition \ref{proposition 4.1}. Setting $\ell =\ell _k$ in \eqref{5.40} and taking the limit $\ell_k\rightarrow\infty$, we get
\begin{equation}\label{5.41}
    \int_{\Omega_r^+}|v^+|^p\geq1-C\alpha^{[r]}
\end{equation}
We conclude that $\int_{\Omega_{\infty}^+}|v^+|^p=1$ by taking $r$ to infinity. Now by \eqref{limit of first eigenvalue} and Fatou’s lemma, we have
\begin{multline}\label{5.42}
\nu_{\infty}^+=\lim_{\ell \rightarrow\infty}\lambda_M^1(\Omega_\ell )= \lim_{\ell \rightarrow\infty}\int_{\Omega_\ell }|A\nabla u_\ell \cdot \nabla u_\ell |^{\frac{p}{2}}\\
\geq \limsup_{k\rightarrow\infty}\int_{\Omega_r^+}|A\nabla \tilde{v}_{\ell _k}^+\cdot \nabla \tilde{v}_{\ell _k}^+|^{\frac{p}{2}}\geq\int_{\Omega_r^+}|A\nabla v^+\cdot \nabla v^+|^{\frac{p}{2}}.
\end{multline}
Hence from \eqref{5.41} and \eqref{5.42} we deduce that $\int_{\Omega_{\infty}^+}|A\nabla v^+\cdot \nabla v^+|^{\frac{p}{2}}\leq\nu_{\infty}^+$. The reverse inequality follows from \eqref{nu infinity} and we conclude that $v^+$ is a normalised minimizer realizing $\nu_{\infty}^+$. Since the minimizer is unique upto multiply by a constant, it necessarily coincides with $\tilde{u}^+$.
Now we define the following function on $(0,\infty)$,
$$f(r)=\limsup_{k\rightarrow\infty}\int_{\Omega_r^+}|A\nabla \tilde{v}_{\ell _k}^+\cdot \nabla \tilde{v}_{\ell _k}^+|^{\frac{p}{2}}-\int_{\Omega_r^+}|A\nabla v^+\cdot \nabla v^+|^{\frac{p}{2}}.$$
From \eqref{5.42} it follows  that $f$ is non-negative and non-decreasing. On the other hand, $f(r)\rightarrow0$ as $r\rightarrow\infty$. Hence $f$ must be identically zero on $(0,\infty)$. This implies $\tilde{v}_{\ell _k}^+$ converges to $\tilde{u}^+$ strongly in $W^{1,p}(\Omega_r^+)$ for any $r>0$. The convergence for the whole family $\{\tilde{v}_\ell ^+\}$ follows from the uniqueness of the all possible limits.\smallskip

(ii) If $A$ satisfies the symmetry (S), it is easy to check that $u_\ell (x_1,X_2)=u_\ell (-x_1,-X_2)$. Using this, a similar computation gives the required result for this case.
\end{proof}

\begin{section}{Convergence of eigenvalues using Krasnoselskii genus}
In this section we prove the convergence results for higher eigenvalues (Theorem \ref{theorem kth ev} and Theorem \ref{theorem second K ev}) using the Krasnoselskii genus. The following lemma shows that for $p=2$, the $k$-th Krasnoselskii eigenvalue $\beta_k(\Omega_\ell)$ of problem \eqref{mix} is same as the $k$-th eigenvalue $\lambda_M^k(\Omega_\ell)$. This fact may be known, but  for the  sake of completeness we include the proof here.
\begin{lemma}\label{lemma 6.1}
Let for any $k\in \mathbb{N}$, $\beta_k(\Omega_\ell)$ be defined as in Theorem \emph{\ref{theorem genus}} with the closed and symmetric $C^{1,1}$-manifold
$$M_\ell=\left\{u\in V(\Omega_\ell):\int_{\Omega_\ell}u^2=1 \right\}$$
of the Banach space $V(\Omega_\ell)$ and with the even functional
$$E_\ell(u)=\int_{\Omega_\ell}A\nabla u\cdot \nabla u$$
on $M_\ell$. Then $\beta_k(\Omega_\ell)=\lambda_M^k(\Omega_\ell)$, where $\lambda_M^k(\Omega_\ell)$ is the $k$-th eigenvalue of the problem \eqref{mix} for $p=2$.
\end{lemma}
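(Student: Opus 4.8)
The plan is to prove the two inequalities $\beta_k(\Omega_\ell)\le\lambda_M^k(\Omega_\ell)$ and $\beta_k(\Omega_\ell)\ge\lambda_M^k(\Omega_\ell)$ separately. Throughout one uses that for $p=2$ the bilinear form $(u,v)\mapsto\int_{\Omega_\ell}A\nabla u\cdot\nabla v$ on $V(\Omega_\ell)$ is symmetric, continuous and coercive (by \eqref{uniform ellip} and the Poincar\'e inequality \eqref{poincare}), so the associated operator has compact resolvent; hence there is an $L^2(\Omega_\ell)$-orthonormal basis $\{e_j\}_{j\ge1}\subset V(\Omega_\ell)$ of eigenfunctions of \eqref{mix} with $\int_{\Omega_\ell}A\nabla e_i\cdot\nabla e_j=\lambda_M^j(\Omega_\ell)\delta_{ij}$, all $\lambda_M^j(\Omega_\ell)>0$, and for $u=\sum_j c_je_j\in V(\Omega_\ell)$ one has $E_\ell(u)=\sum_j\lambda_M^j(\Omega_\ell)c_j^2$ and $\int_{\Omega_\ell}u^2=\sum_j c_j^2$. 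The classical min--max identity $\lambda_M^k(\Omega_\ell)=\min\{E_\ell(u):u\in V(\Omega_\ell),\ \int_{\Omega_\ell}u^2=1,\ \int_{\Omega_\ell}ue_j=0\ \text{for }j<k\}$ will also be invoked.

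\textbf{Upper bound.} Take $K=\mathrm{span}\{e_1,\dots,e_k\}\cap M_\ell$. This is a compact symmetric subset of $V(\Omega_\ell)$ homeomorphic to $S^{k-1}$, so $\gamma(K)=k$ by Proposition \ref{prop genus}(ii) and thus $K\in\mathcal{F}_k$. For $u=\sum_{j=1}^k c_je_j\in K$ we have $\sum_{j=1}^k c_j^2=1$ and $E_\ell(u)=\sum_{j=1}^k\lambda_M^j(\Omega_\ell)c_j^2\le\lambda_M^k(\Omega_\ell)$, so $\sup_{u\in K}E_\ell(u)\le\lambda_M^k(\Omega_\ell)$, which gives $\beta_k(\Omega_\ell)\le\lambda_M^k(\Omega_\ell)$. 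In particular $\beta_k(\Omega_\ell)<\infty$, so Theorem \ref{theorem genus}(ii) indeed delivers a critical value.

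\textbf{Lower bound.} Fix an arbitrary $A\in\mathcal{F}_k$ and suppose, toward a contradiction, that $\sup_{u\in A}E_\ell(u)<\lambda_M^k(\Omega_\ell)$. Define $P\colon V(\Omega_\ell)\to\mathbb{R}^{k-1}$ by $P(u)=\big(\int_{\Omega_\ell}ue_1,\dots,\int_{\Omega_\ell}ue_{k-1}\big)$; it is linear, odd, and continuous (using the continuous embedding $V(\Omega_\ell)\hookrightarrow L^2(\Omega_\ell)$). If $P(u)=0$ for some $u\in A$, then $u$ lies in the closed span of $\{e_j\}_{j\ge k}$, hence $E_\ell(u)=\sum_{j\ge k}\lambda_M^j(\Omega_\ell)c_j^2\ge\lambda_M^k(\Omega_\ell)\sum_{j\ge k}c_j^2=\lambda_M^k(\Omega_\ell)\int_{\Omega_\ell}u^2=\lambda_M^k(\Omega_\ell)$, contradicting the assumption. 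Therefore $P$ maps $A$ oddly and continuously into $\mathbb{R}^{k-1}\setminus\{0\}$, so by the definition of the genus $\gamma(A)\le k-1$, contradicting $\gamma(A)\ge k$. Thus $\sup_{u\in A}E_\ell(u)\ge\lambda_M^k(\Omega_\ell)$ for every $A\in\mathcal{F}_k$, and taking the infimum over $\mathcal{F}_k$ gives $\beta_k(\Omega_\ell)\ge\lambda_M^k(\Omega_\ell)$. Combining the two bounds proves the lemma.

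\textbf{Main obstacle.} The only substantive step is the lower bound: it rests on the Borsuk--Ulam-type rigidity of the genus (a symmetric set of genus $\ge k$ cannot be mapped by an odd continuous map into $\mathbb{R}^{k-1}\setminus\{0\}$), together with the observation that the $L^2$-orthogonal complement of $\{e_1,\dots,e_{k-1}\}$ sits at energy level $\ge\lambda_M^k(\Omega_\ell)$; everything else is bookkeeping with the spectral decomposition. A minor technical point is to ensure that the maps entering the genus are continuous for the $V(\Omega_\ell)$-norm, which is immediate from the embedding into $L^2(\Omega_\ell)$.
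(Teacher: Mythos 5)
Your proof is correct. The upper bound is essentially the same as the paper's: both take the unit $L^2$-sphere of the span of the first $k$ eigenfunctions as a genus-$k$ competitor, and compute $\sup E_\ell = \lambda_M^k(\Omega_\ell)$ there.

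For the lower bound you take a genuinely different route. You use the Borsuk--Ulam rigidity built into the definition of genus: the odd continuous map $P(u)=\bigl(\int u e_1,\dots,\int u e_{k-1}\bigr)$ from $V(\Omega_\ell)$ to $\R^{k-1}$ must vanish somewhere on any $A$ with $\gamma(A)\geq k$, and at such a point $u$ is $L^2$-orthogonal to $e_1,\dots,e_{k-1}$, so $E_\ell(u)\geq\lambda_M^k(\Omega_\ell)$ by the spectral decomposition (equivalently Courant--Fischer). The paper instead invokes Proposition \ref{prop genus}(iii) to extract $k$ vectors $\{w_1,\dots,w_k\}\subset A$ that are mutually orthogonal with respect to the equivalent inner product $(u,v)_A=\int_{\Omega_\ell}A\nabla u\cdot\nabla v$, sets $V_0=\mathrm{span}\{w_i\}$, and compares $\max_{V_0\cap M_\ell}E_\ell$ with $\lambda_M^k(\Omega_\ell)$ via the min--max over $k$-dimensional subspaces. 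Your argument is the more classical Ljusternik--Schnirelmann intersection argument; it sidesteps the compactness hypothesis in Proposition \ref{prop genus}(iii), and it also avoids a delicate point in the paper's step (6.2), where the bound $\sum_i\alpha_i^2 E_\ell(w_i)\leq\max_i E_\ell(w_i)$ for $\|u\|_{L^2}=1$ would require $\sum_i\alpha_i^2\leq1$, which does not follow from $A$-orthogonality of the $w_i$ alone (they need not be $L^2$-orthogonal). Your version is therefore the safer of the two and entirely self-contained given the genus definition; the paper's buys nothing extra here beyond illustrating the orthogonality property of the genus. One minor remark: your odd-map argument requires $k\geq 2$ (for $k=1$ the target $\R^{0}\setminus\{0\}$ is empty), but that case is trivial since every $u\in M_\ell$ already satisfies $E_\ell(u)\geq\lambda_M^1(\Omega_\ell)$.
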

\begin{proof}
Let $\left\{u_\ell^1,u_\ell^2,\cdots u_\ell^k\right\}$ be the first $k$ normalised eigenfunctions of the problem \eqref{mix}. It is well known that $u_\ell^i$'s are mutually orthogonal and
\begin{equation}\label{6.1}
\lambda_M^k(\Omega_\ell)=\underset{\underset{||u||_{L^2}=1}{u\in\mathrm{sp}\{u_\ell^1,\cdots u_\ell^k\}}}{\mathrm{max}}\,E_\ell(u)=\underset{\underset{\mathrm{dim}V=k}{V\subset V(\Omega_\ell)}}{\mathrm{min}}\,\underset{\underset{||u||_{L^2}=1}{u\in V}}{\mathrm{max}}\,E_\ell(u).
\end{equation}
Consider the subspace $V_0=\mathrm{sp}\left\{u_\ell^1,u_\ell^2,\cdots ,u_\ell^k\right\}$ of $V(\Omega_\ell)$. Define a closed symmetric subset $A_0$ of $M_\ell$ as
$$A_0=V_0\cap M_\ell=\left\{\sum_{i=1}^{k}\alpha_iu_\ell^i :\int_{\Omega_\ell}\left(\sum_{i=1}^{k}\alpha_iu_\ell^i\right)^2=1 \right\}=\left\{\sum_{i=1}^{k}\alpha_iu_\ell^i :\sum_{i=1}^{k}\alpha_i^2=1 \right\}.$$
Clearly, $V_0\cong \mathbb{R}^k$ and $A_0$ is homeomorphic to the unit sphere in $\mathbb{R}^k,$ and therefore Proposition \ref{prop genus} gives $\gamma(A_0)=k$. Now, definition of $\beta_k(\Omega_\ell)$ gives
$$\beta_k(\Omega_\ell)\leq \underset{u\in A_0}{\mathrm{sup}}\,E_\ell(u)=\underset{\underset{||u||_{L^2}=1}{u\in\mathrm{sp}\{u_\ell^1,\cdots u_\ell^k\}}}{\mathrm{sup}}\,E_\ell(u)=\lambda_M^k(\Omega_\ell).$$
For the reverse inequality, first we fix $\epsilon>0$. $\exists$ $A_0\in \mathcal{F}_k$ such that $\underset{u\in A_0}{\mathrm{sup}}E_\ell(u)<\beta_k(\Omega_\ell)+\epsilon$. Clearly by \eqref{uniform bound} and \eqref{uniform ellip}, $(u,v)_A=\int_{\Omega_\ell}A\nabla u\cdot \nabla v$ is an equivalent inner product on $V(\Omega_\ell)$. By Propotion \ref{prop genus}, let $\{w_1, w_2,\cdots w_k\}$ be a set of orthogonal (w.r.t $(\cdot,\cdot)_A$)
vectors in $A_0$.\\
 Let $V_0=\mathrm{sp}\left\{w_1, w_2,\cdots w_k\right\}$. For any $u=\alpha_1w_1+\cdots +\alpha_kw_k\in V_0$ with $||u||_{L^2(\Omega_\ell)}=1$, we have
\begin{multline}\label{6.2}
E_\ell(u)=\int_{\Omega_\ell}A\nabla u\cdot \nabla u=\int_{\Omega_\ell}A\nabla (\sum_{i=1}^k\alpha_iw_i)\cdot \nabla(\sum_{i=1}^k\alpha_iw_i)
=\sum_{i=1}^k\alpha_i^2 E_\ell(w_i)\leq \underset{1\leq i\leq k}{\mathrm{max}}E_\ell(w_i).
    \end{multline}
Hence from \eqref{6.2}
\begin{equation}
\lambda_M^k(\Omega_\ell)\leq \underset{\underset{||u||_{L^2}=1}{u\in V_0}}{\mathrm{max}}E_\ell(u)\leq \underset{1\leq i\leq k}{\mathrm{max}}\,E_\ell(w_i)\leq \underset{u\in A_0}{\mathrm{sup}}\,E_\ell(u)<\beta_k(\Omega_\ell)+\epsilon.
    \end{equation}
Since $\epsilon$ is arbitrary, this completes the proof.
\end{proof}
\begin{corollary}\label{coro 2}
For $p>2,$ we have $\beta_1(\Omega_\ell)=\lambda_M^1(\Omega_\ell)$.
\end{corollary}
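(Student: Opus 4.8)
The plan is to deduce the equality directly from the two variational descriptions, using only the elementary properties of the genus recorded in Section~2. Recall that, by definition, $\beta_1(\Omega_\ell)=\inf_{A\in\mathcal F_1}\sup_{u\in A}E_\ell(u)$ where $\mathcal F_1=\{A\in\mathcal A:A\subset M_\ell,\ \gamma(A)\ge 1\}$, $E_\ell(u)=\int_{\Omega_\ell}|A\nabla u\cdot\nabla u|^{\frac p2}$, and $M_\ell=\{u\in V(\Omega_\ell):\int_{\Omega_\ell}|u|^p=1\}$; while, since the Rayleigh quotient in \eqref{2.9} is invariant under $u\mapsto tu$, one has $\lambda_M^1(\Omega_\ell)=\inf_{u\in M_\ell}E_\ell(u)$. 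So it suffices to prove the two inequalities $\beta_1(\Omega_\ell)\le\inf_{u\in M_\ell}E_\ell(u)$ and $\beta_1(\Omega_\ell)\ge\inf_{u\in M_\ell}E_\ell(u)$.

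For the upper bound I would fix an arbitrary $u\in M_\ell$ and consider the two-point set $A_u=\{u,-u\}$. Because $|-u|^p=|u|^p$, the manifold $M_\ell$ is symmetric, and $A_u$ is finite, hence closed; thus $A_u\in\mathcal A$ and $A_u\subset M_\ell\subset V(\Omega_\ell)\setminus\{0\}$. Since $A_u\neq\emptyset$ we have $\gamma(A_u)\ge 1$ (indeed the odd continuous map $u\mapsto 1$, $-u\mapsto -1$ shows $\gamma(A_u)=1$), so $A_u\in\mathcal F_1$. As $E_\ell$ is even, $\sup_{v\in A_u}E_\ell(v)=E_\ell(u)$, whence $\beta_1(\Omega_\ell)\le E_\ell(u)$. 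Taking the infimum over $u\in M_\ell$ gives $\beta_1(\Omega_\ell)\le\lambda_M^1(\Omega_\ell)$; in particular $\beta_1(\Omega_\ell)$ is finite, so the min-max is well posed (alternatively this follows from $\hat\gamma(M_\ell)=\infty$, established in Section~2, together with Theorem~\ref{theorem genus}(ii)).

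For the lower bound I would take an arbitrary $A\in\mathcal F_1$. Since $\gamma(A)\ge 1>0=\gamma(\emptyset)$, the set $A$ is nonempty; choosing any $u\in A\subset M_\ell$ yields $\sup_{v\in A}E_\ell(v)\ge E_\ell(u)\ge\inf_{w\in M_\ell}E_\ell(w)=\lambda_M^1(\Omega_\ell)$. Taking the infimum over $A\in\mathcal F_1$ gives $\beta_1(\Omega_\ell)\ge\lambda_M^1(\Omega_\ell)$, and combining with the previous paragraph proves the corollary. There is essentially no hard step: the only points needing care are that $M_\ell$ is symmetric, that $E_\ell$ is even, and that a nonempty closed symmetric subset of $V(\Omega_\ell)\setminus\{0\}$ has genus at least $1$ — all immediate from the definitions. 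This is exactly the mechanism behind the $k=1$ instance of Lemma~\ref{lemma 6.1}, the difference being that here no Hilbert-space structure is available, but none is needed for the first eigenvalue.
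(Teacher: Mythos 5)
Your proof is correct and follows exactly the mechanism the paper intends for this corollary, which it leaves unproved (it is stated right after Lemma~\ref{lemma 6.1}, whose $k=1$ argument it mirrors). You are right that the only genus-theoretic facts needed are that any two-point set $\{u,-u\}$ with $u\neq 0$ has genus $1$ and that any nonempty set has genus at least $1$, and that the Hilbert-space orthogonality used in the lower-bound step of Lemma~\ref{lemma 6.1} is superfluous when $k=1$, which is precisely why the corollary holds for all $p>2$.
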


\begin{proof}[\textbf{Proof of Theorem} \ref{theorem kth ev}]
First we consider the even case, i.e. the $2k$-th eigenvalue of the problem \eqref{mix}. Since $A$ satisfies the symmetry (S), $\tilde{\lambda}_M^1(\Omega_\ell ^+)=\tilde{\lambda}_M^1(\Omega_\ell ^-)$, holds $\forall\, \ell>0$ and Remark \ref{remark 1}, Lemma \ref{lemma 4.2} and Theorem \ref{theorem limit of first eigenvalue} together gives
 \begin{equation}\label{6.4}
\lim_{\ell\rightarrow\infty}\lambda_M^1(\Omega_\ell)=\lim_{\ell\rightarrow\infty}\tilde{\lambda}_M^1(\Omega_\ell ^+).
\end{equation}
Again we have $\lambda_M^1(\Omega_\ell)\leq \tilde{\lambda}_M^1(\Omega_{\ell_1} ^+)$, $\forall \ell_1\leq \ell$ by considering the test function $u\in V(\Omega_\ell)$ in \eqref{2.9} as $u(x_1,X_2)=\tilde{u}_{\ell_1}^+(x_1+\ell,X_2)$ on $\Omega_\ell^-\setminus \Omega_{\ell-\ell_1}^-$ and $0$ on $\Omega_{\ell-\ell_1}^-\cup \Omega_\ell^+$, where $\tilde{u}_{\ell_1}^+$ is the eigenfunction corresponding to $\tilde{\lambda}_M^1(\Omega_{\ell_1} ^+)$ (see Remark \ref{remark 2}). Let $\epsilon>0$. Then $\exists$ $\ell_0>0$ such that for $1\leq i\leq k$
\begin{equation}\label{6.5}
\tilde{\lambda}_M^1(\Omega_{\frac{i}{k}\ell}^+)<\lambda_M^1(\Omega_\ell)+\epsilon, \quad \forall  \ \frac{\ell}{k}>\ell_0.
    \end{equation}
    For $1\leq i\leq k$ and $\ell\geq\ell_0$, we define the following functions on $\Omega_\ell$:
\begin{equation}\label{6.6}
h_i^-(x_1,X_2)=\begin{cases}
        \tilde{u}_{\frac{i}{k}\ell}^+(x_1+\ell,X_2)
        & \text{on $\Omega_\ell^-\setminus \Omega_{(1-\frac{i}{k})\ell}^-$}\\
        0
        & \text{otherwise,}
    \end{cases}
\end{equation}
\begin{equation}\label{6.7}
h_i^+(x_1,X_2)=\begin{cases}
        \tilde{u}_{\frac{i}{k}\ell}^-(x_1-\ell,X_2)
        & \text{on $\Omega_\ell^+\setminus \Omega_{(1-\frac{i}{k})\ell}^+$}\\
        0
        & \text{otherwise,}
    \end{cases}
\end{equation}
where $\tilde{u}_{\frac{i}{k}\ell}^{\pm}$ are defined in Remark \ref{remark 2} for $p=2$. Let $u_{2i-1}=h_i^-$ and $u_{2i}=h_i^+$, where $1\leq i\leq k$ and we define the subspace $V_0=\mathrm{sp}\left\{u_1,u_2,\cdots,u_{2k}\right\}$ of $V(\Omega_\ell)$. Consider the closed symmetric subset $A_0=V_0\cap M_\ell$, where $M_\ell$ is the $C^{1,1}$-manifold defined in Lemma \ref{lemma 6.1}. Clearly $\gamma(A_0)=2k$. Now for any $u=\sum_{j=1}^{2k}\alpha_j u_j\in A_0$,
\begin{equation}\label{6.8}
    \int_{\Omega_\ell}A\nabla u\cdot \nabla u=\sum_{j=1}^{2k}\alpha_j^2\int_{\Omega_\ell}A\nabla u_j\cdot \nabla u_j + 2\sum_{1\leq i<j\leq 2k}\alpha_i\alpha_j\int_{\Omega_\ell}A\nabla u_i\cdot \nabla u_j.
\end{equation}
For $1\leq j\leq 2k$, we have
\begin{equation}\label{6.9}
    \int_{\Omega_\ell}A\nabla u_j\cdot \nabla u_j=\begin{cases}
        \tilde{\lambda}_M^1(\Omega_{\frac{j+1}{2k}\ell}^+)\int_{\Omega_\ell}u_j^2
        & \text{if $j$ is odd,}\\
        \tilde{\lambda}_M^1(\Omega_{\frac{j}{2k}\ell}^+)\int_{\Omega_\ell}u_j^2
        & \text{if $j$ is even,}
    \end{cases}
\end{equation}
and for $1\leq i<j\leq 2k$,
\begin{equation}\label{6.10}
    \int_{\Omega_\ell}A\nabla u_i\cdot \nabla u_j=\begin{cases}
        \tilde{\lambda}_M^1(\Omega_{\frac{j+1}{2k}\ell}^+)\int_{\Omega_\ell}u_iu_j
        & \text{if $j$ is odd,}\\
        \tilde{\lambda}_M^1(\Omega_{\frac{j}{2k}\ell}^+)\int_{\Omega_\ell}u_iu_j
        & \text{if $j$ is even.}
    \end{cases}
\end{equation}
Using \eqref{6.9}, \eqref{6.10} and \eqref{6.5} in \eqref{6.8} we get
\begin{multline}\label{6.11}
    \int_{\Omega_\ell}A\nabla u\cdot \nabla u\leq \left(\lambda_M^1(\Omega_\ell)+\epsilon\right)\left[\sum_{j=1}^{2k}\alpha_j^2\int_{\Omega_\ell}u_j^2 + 2\sum_{1\leq i<j\leq 2k}\alpha_i\alpha_j\int_{\Omega_\ell}u_iu_j\right]\\
    =\left(\lambda_M^1(\Omega_\ell)+\epsilon\right)\int_{\Omega_\ell}u^2=\lambda_M^1(\Omega_\ell)+\epsilon.
\end{multline}
Finally, from the definition of $\beta_{2k}(\Omega_\ell)$ we obtain
$$\lambda_M^1(\Omega_\ell)\leq \lambda_M^{2k}(\Omega_\ell)=\beta_{2k}(\Omega_\ell)\leq \underset{u\in A_0}{\mathrm{sup}}\int_{\Omega_\ell}A\nabla u\cdot \nabla u\leq \lambda_M^1(\Omega_\ell)+\epsilon.$$
Since $\epsilon$ is arbitrary, \eqref{kth ev} is true for $k$ is even. Convergence of odd eigenvalues follows  because  $$\lambda_M^{2k-2}(\Omega_\ell)\leq\lambda_M^{2k-1}(\Omega_\ell) \leq \lambda_M^{2k}(\Omega_\ell).$$
\end{proof}

\begin{proof}[\textbf{Proof of Theorem} \ref{theorem second K ev}]
Let $h_1^{\pm}$ is defined as in \eqref{6.6} and \eqref{6.7} with $k=1$. Define the closed and symmetric subset $A_0=V_0\cap M_\ell$ of genus $2$, where $V_0=\mathrm{sp}\{h_1^-,h_1^+\}$. Since $h_1^-$ and $h_1^+$ have disjoint support, for any $u=\alpha h_1^-+\beta h_1^+\in A_0$, we have
$$\int_{\Omega_\ell}|A\nabla u\cdot \nabla u|^{\frac{p}{2}}=\alpha^p\int_{\Omega_\ell}|A\nabla h_1^-\cdot \nabla h_1^-|^{\frac{p}{2}}+\beta^p\int_{\Omega_\ell}|A\nabla h_1^+\cdot \nabla h_1^+|^{\frac{p}{2}}.$$
An analogous calculation as in the proof of Theorem \ref{theorem kth ev} gives the rest of the proof.
\end{proof}

\textbf{Acknowledgement:} \ The third author would like to thank the facilities provided by IMSc. Part of this work was carried out when the third author was visiting IMSc. Research of third author is funded by Core Research grant under project number CRG/2022/007867. The third author would like to thank Prof. Itai Shafrir and Prof. Michel Chipot for several interesting discussion on the subject. In fact, the key ideas of the proof of Theorem \ref{theorem limit of first eigenvalue} and Theorem \ref{theorem gap holds for mu} are due to Prof. Shafrir. 

\textbf{Note:} This work is a part of doctoral thesis work of the second author. He acknowledges the support provided by IIT Kanpur, India and MHRD, Government of India (GATE fellowship).
\end{section}


 \end{document}